 \def\NZQ{\mathbb}               
 \def\NN{{\NZQ N}}
 \def\ZZ{{\NZQ Z}}
 \def\RR{{\NZQ R}}
 \def\FF{{\NZQ F}}
 \def\frk{\mathfrak}               
 \def\aa{{\frk a}}
 \def\pp{{\frk p}}
 \def\mm{{\frk m}}
 \def\nn{{\frk n}}
 \def\Jc{{\mathcal J}}
 \def\G{{\mathcal G}}
 \def\B{{\mathcal B}}
  \def\Mc{{\mathcal M}}
 \def\xb{{\mathbf x}}
 \def\opn#1#2{\def#1{\operatorname{#2}}} 
 \opn\chara{char} \opn\length{\ell} \opn\pd{pd} \opn\rk{rk}
 \opn\projdim{proj\,dim} \opn\injdim{inj\,dim} \opn\rank{rank}
 \opn\depth{depth} \opn\grade{grade} \opn\height{height}
 \opn\embdim{emb\,dim} \opn\codim{codim}
 \opn\Tr{Tr} \opn\bigrank{big\,rank}
 \opn\superheight{superheight}\opn\lcm{lcm}
 \opn\trdeg{tr\,deg}
 \opn\reg{reg} \opn\lreg{lreg} \opn\ini{in} \opn\lpd{lpd}
 \opn\size{size} \opn\sdepth{sdepth}
 \opn\link{link}\opn\fdepth{fdepth}\opn\lex{lex}
 \opn\tr{tr}
 \opn\div{div} \opn\Div{Div} \opn\cl{cl} \opn\Cl{Cl}
 \opn\Spec{Spec} \opn\Supp{Supp} \opn\supp{supp} \opn\Sing{Sing}
 \opn\Ass{Ass} \opn\Min{Min}\opn\Mon{Mon}
 \opn\Ann{Ann} \opn\Rad{Rad} \opn\Soc{Soc}
 \opn\Im{Im} \opn\Ker{Ker} \opn\Coker{Coker} \opn\Am{Am}
 \opn\Hom{Hom} \opn\Tor{Tor} \opn\Ext{Ext} \opn\End{End}
 \opn\Aut{Aut} \opn\id{id}
 \opn\nat{nat}
 \opn\pff{pf}
 \opn\Pf{Pf} \opn\GL{GL} \opn\SL{SL} \opn\mod{mod} \opn\ord{ord}
 \opn\Gin{Gin} \opn\Hilb{Hilb}\opn\sort{sort}
 \opn\PF{PF}\opn\Ap{Ap}
 \opn\aff{aff} \opn
\opn\relint{relint} \opn\st{st}
 \opn\lk{lk} \opn\cn{cn} \opn\core{core} \opn\vol{vol}  \opn\inp{inp} \opn\nilpot{nilpot}
 \opn\link{link} \opn\star{star}\opn\lex{lex}\opn\set{set}
 \opn\width{wd}
 \opn\Fr{F}
 \opn\QF{QF}
 \opn\G{G}
 \opn\type{type}\opn\res{res}
 \opn\gr{gr}
 \def\pot#1#2{#1[\kern-0.28ex[#2]\kern-0.28ex]}
 \opn\dirlim{\underrightarrow{\lim}}
 \opn\inivlim{\underleftarrow{\lim}}
 \let\union=\cup
 \let\dirsum=\oplus
 \let\tensor=\otimes
 \let\iso=\cong
 \let\Union=\bigcup
 \let\Dirsum=\bigoplus
 \let\to=\rightarrow
 \let\To=\longrightarrow
 \def\Implies{\ifmmode\Longrightarrow \else
         \unskip${}\Longrightarrow{}$\ignorespaces\fi}
 \def\implies{\ifmmode\Rightarrow \else
         \unskip${}\Rightarrow{}$\ignorespaces\fi}
 \def\iff{\ifmmode\Longleftrightarrow \else
         \unskip${}\Longleftrightarrow{}$\ignorespaces\fi}
 \newtheorem{Theorem}{Theorem}[section]
 \newtheorem{Lemma}[Theorem]{Lemma}
 \newtheorem{Corollary}[Theorem]{Corollary}
 \newtheorem{Proposition}[Theorem]{Proposition}
 \newtheorem{Remark}[Theorem]{Remark}
 \newtheorem{Example}[Theorem]{Example}
 \newtheorem{Definition}[Theorem]{Definition}
 \let\epsilon\varepsilon
 \let\kappa=\varkappa
 \def\qed{\ifhmode\textqed\fi
       \ifmmode\ifinner\quad\qedsymbol\else\dispqed\fi\fi}
 \def\textqed{\unskip\nobreak\penalty50
        \hskip2em\hbox{}\nobreak\hfil\qedsymbol
        \parfillskip=0pt \finalhyphendemerits=0}
 \def\dispqed{\rlap{\qquad\qedsymbol}}
 \opn\dis{dis}
 \def\pnt{{\raise0.5mm\hbox{\large\bf.}}}
 \opn\Lex{Lex}
\begin{document}
\title {The trace of the canonical module}

\author {J\"urgen Herzog, Takayuki Hibi and Dumitru I.\ Stamate}

\address{J\"urgen Herzog, Fachbereich Mathematik, Universit\"at Duisburg-Essen, Campus Essen, 45117
Essen, Germany} \email{juergen.herzog@uni-essen.de}

\address{Takayuki Hibi, Department of Pure and Applied Mathematics, Graduate School of Information Science and Technology,
Osaka University, Toyonaka, Osaka 560-0043, Japan}
\email{hibi@math.sci.osaka-u.ac.jp}

\address{Dumitru I. Stamate, ICUB/Faculty of Mathematics and Computer Science, University of Bucharest, Str. Academiei 14, Bucharest -- 010014, Romania }
\email{dumitru.stamate@fmi.unibuc.ro}

\dedicatory{ }

\begin{abstract}
The trace of the canonical module (the canonical trace)  determines the non-Gorenstein locus of a local Cohen--Macaulay ring.
We call a local Cohen--Macaulay ring nearly Gorenstein, if its canonical trace contains the maximal ideal.
Similar definitions can be made for positively graded Cohen--Macaulay $K$-algebras.
We study the canonical trace for tensor products and  Segre products of algebras,
 as well as  of (squarefree)  Veronese subalgebras.
The results are used to classify the nearly Gorenstein Hibi rings.
We study connections between the class of nearly Gorenstein rings and that of almost Gorenstein rings. We show that in dimension one, the former class includes the latter. 
\end{abstract}

\thanks{}

\subjclass[2010]{Primary 13H10, 13D02, 05E40; Secondary 14M25, 13A02, 13F20,  06A11}


\keywords{canonical module, trace, nearly Gorenstein, almost Gorenstein, Veronese algebras, Segre product,  Hibi ring, minimal multiplicity}

\maketitle

\setcounter{tocdepth}{1}
\tableofcontents

\section*{Introduction}

In this paper we study the trace of the canonical module of a Cohen-Macaulay ring $R$. The Cohen-Macaulay ring may either be a local ring admitting a canonical module or else a finitely generated positively graded $K$-algebra, where $K$ is a field. All of the definitions and results which are  phrased for local rings have their  analogous correspondence for graded rings. Thus for the general facts about traces we will restrict ourselves to local rings, unless otherwise stated.

Recall that for an $R$-module $M$  one defines the trace of $M$, denoted $\tr(M)$, as the sum of the ideals $\varphi(M)$,
where the sum is taken over all $R$-module homomorphisms $\varphi\: M\to R$.
  Traces of modules have been considered in various contexts, in particular to better understand the center of the ring of endomorphisms of a module, cf. \cite{Lindo}.
Here we are only interested in the trace of the canonical module.

The significance of the trace   of the canonical module $\omega_R$ arises from the fact that it describes the non-Gorenstein locus of $R$, see Lemma~\ref{non}. In particular, $R$ is Gorenstein if $\tr(\omega_R)=R$,  and it is Gorenstein on the punctured spectrum of $R$ if $\tr(\omega_R)$ is $\mm$-primary, where $\mm$ is the maximal ideal of $R$.
 Ding \cite{Ding} has already considered $\tr(\omega_R)$. 
He showed that $\tr(\omega_R)$ is $\mm$-primary if and only if the Auslander index of $R$ is finite, see \cite[Theorem 1.1]{Ding}. 

 We call $R$ {\em nearly Gorenstein}, if $\mm\subseteq \tr(\omega_R)$. Thus $R$ is nearly Gorenstein but not Gorenstein, if and only if $\tr(\omega_R)=\mm$.
  We would like to understand how much this class differs from the Gorenstein one, and compare it with related concepts like almost Gorenstein, in the sense of \cite{BF} and \cite{GTT}.
 Using a result of Teter, in  \cite[Corollary 2.2]{HunekeVraciu} Huneke and Vraciu  showed  that quotients $R$ of Gorenstein artinian 
local rings by their socle satisfy the condition $\mm\subseteq \tr(\omega_R)$. 
They also classified the $\mm$-primary monomial ideals $I$ in a polynomial ring $S$ such that $S/I$ is of small type and it is nearly Gorenstein, see \cite[Example 4.3, Theorem 4.5]{HunekeVraciu}. 

In this paper, after proving general statements about $\tr(\omega_R)$ and how it can be computed, we apply these results to study the nearly Gorenstein property
for several classes of algebras, including the ones of Veronese type, Segre products, Hibi rings, or one dimensional  rings.

It is easily seen that if $I\subset R$ is an ideal with $\grade I>0$, then  $\tr(I)=I\cdot I^{-1}$. Here $I^{-1}$ denotes the inverse ideal of $I$, namely 
$I^{-1}=\{x\in Q(R)\: xI\subseteq R\}$ with $Q(R)$  the total ring of fractions  of $R$. In particular, if $R$ is generically Gorenstein, but not Gorenstein, $\omega_R$ may be identified with an ideal of grade $1$, and  hence in this case  $\tr(\omega_R)=\omega_R\omega_R^{-1}$,  where $\omega_R^{-1}$ is the anti-canonical ideal of $R$. While the canonical ideal $\omega_R$ is only determined up to isomorphism, its  trace is uniquely determined.

For the convenience of the reader we collect and record in Section~\ref{basic} a few general and basic properties on the trace of modules, some of them well-known. In Proposition~\ref{prop:ideals-trace} the trace of a product of ideals is considered in relation to the traces of its factors, while  Lemma~\ref{lemma:trace-changeofrings} describes the behavior of the trace under change of rings.

In  Section~\ref{sec:nearly}  nearly Gorenstein rings are introduced and it is shown in Proposition \ref{behaviour} that if a ring  is nearly Gorenstein, then any reduction of the ring modulo a regular sequence  is again nearly Gorenstein. It is also observed that the converse  does not hold in general.

In the case that $R$ is a residue class ring of a regular local ring $S$, $\tr(\omega_R)$ can be computed in terms of the free $S$-resolution of $R$, more precisely in terms of the last step  of the resolution, as shown in Corollary~\ref{first}. This  is the consequence  of a result of Vasconcelos \cite{Vasconcelos} which we recall in Proposition~\ref{vasconcelos}. In the rest of Section~\ref{sec:computation-trace} other, more special,  situations are considered in which $\tr(\omega_R)$ can be computed more explicitly. Notable is the situation when $R$ is generically Gorenstein, e.g. a domain, and the Cohen--Macaulay type of $R$ is $2$. It is shown in Corollary~\ref{third} that in this case the entries of the last map in the free $S$-resolution of $R=S/I$ generate $\tr(\omega_R)$  modulo $I$.

Section~\ref{sec:special} is devoted to the study of the canonical trace ideal for special algebra constructions. There we have in mind tensor products of algebras, Veronese algebras, squarefree Veronese algebras and Segre products. The result for tensor algebras (Theorem~\ref{thm:canonical-tensor-product}) asserts the following: let $K$ be a field and $R_1$ and $R_2$ be positively graded  Cohen-Macaulay $K$-algebras, and let $\omega_{R_1}$, $\omega_{R_2}$ be their respective canonical modules. Then $\tr_R(\omega_{R_1\tensor_KR_2}) = \tr_{R_1}(\omega_{R_1})R \cdot \tr_{R_2}(\omega_{R_2})R$.  As  an  immediate consequence one obtains  that the tensor product $R_1\tensor_KR_2$ is nearly Gorenstein if and only if it    is Gorenstein,  which in turn is the case if and only if both $R_1$ and $R_2$ are Gorenstein, see Corollary~\ref{cor:tensor-NG}. In particular,   a polynomial ring extension of a positively graded $K$-algebra $R$ is nearly Gorenstein if and only if $R$ is Gorenstein, see Corollary~\ref{polynomial}. A similar statement holds for power series over a local ring, as stated in Proposition~\ref{localanalogue}.

Next we consider Veronese subalgebras of a standard graded $K$-algebra $R$. Let $d>0$ be an integer and  $R^{(d)}=\Dirsum_iR_{id}$ be the $d$-th Veronese subalgebra of $R$.
The $R^{(d)}$-modules $M_j=\Dirsum_{i\in \NN}R_{di+j}$ for $j=0,\dots, d-1$ are called the $d$-th Veronese submodules of $R$. In Theorem~\ref{thm:vero-modules} we show that if $R$ has positive depth, then $\mm^{(d)} \subseteq \tr_{R^{(d)}}(M_j)$ for any $d$-th Veronese submodule $M_j$ of $R$. Here $\mm^{(d)}$ denotes the graded maximal ideal of $R^{(d)}$. By using a result of Goto and Watanabe \cite{GW} one deduces then from Theorem~\ref{thm:vero-modules} that all Veronese subalgebras of a standard graded $K$-algebra $R$ over an infinite field $K$ are nearly Gorenstein, if $R$ is Gorenstein.

The situation for squarefree Veronese subalgebras is  more complicated.
Given integers $n\geq d>0$, the  $d$-th squarefree Veronese subalgebra $R_{n,d}$ of the polynomial ring $S$ in $n$ variables over a field $K$  is the $K$-algebra
generated by the squarefree monomials in $S$ of degree $d$.
Based on a theorem of Bruns, Vasconcelos and Villarreal  \cite{BVV} we give in Theorem~\ref{thm:sqfreevero-anticanonical} an explicit description of the anti-canonical ideal of
$R_{n,d}$, and we use this result to show in Theorem~\ref{thm:sqfreevero}   that the following conditions are equivalent:
(i) $R_{n,d}$ is nearly Gorenstein,  (ii) $R_{n,d}$ is Gorenstein,  (iii) $d=1$ or $d=n-1$ or $n=2d$.

By another  result of Goto and Watanabe \cite{GW}, the canonical module for the Segre product  $T=R\# S$ of positively graded Cohen-Macaulay $K$-algebras $R$ and $S$ of Krull dimension at least $2$
 is just the Segre product of the respective canonical modules,  assuming that $T$ is Cohen--Macaulay.
We use this result in Theorem \ref{general segre} to compute $\tr(\omega_T)$ in the case that $R$ and $S$ are standard graded Gorenstein $K$-algebras.
It is shown in Theorem~\ref{general segre} that $\mm_T^{|r-s|}\subseteq \tr(\omega_T)$,
where $\mm_T$ is the graded maximal ideal of $T$ and where $r$ and $s$ are the respective $a$-invariants of $R$ and $S$.
Equality holds, if    $R$ and $S$ are homogeneous semigroup rings .
It follows that under these conditions $T$ is nearly Gorenstein if and only if $|r-s|\leq 1$, see Corollary~\ref{nearlysegre}. The section ends with Proposition~\ref{lemma:i} in which the  anti-canonical ideal of the Segre product is computed in the case that $R$ and $S$ are polynomial rings.

These results on Segre products are used in Section~\ref{sec:hibi} to give a complete classification of all nearly Gorenstein Hibi rings. Given a finite distributive lattice $L$ and a field $K$, the Hibi ring of $L$ defined over $K$ is the toric ring $\mathcal{R}_K[L]$ whose relations are the meet-join relations of $L$. By a fundamental theorem of Birkhoff, $L$ is the ideal lattice $\Jc(P)$ of its poset of join irreducible elements. It is shown in Theorem~\ref{nearlygorhibiring} that  $\mathcal{R}_K[L]$ is nearly Gorenstein
if and only if $P$ is the disjoint union of pure connected posets
$P_1, \ldots, P_q$ such that $|\rank(P_i) - \rank(P_j)| \leq 1$
for $1 \leq i < j \leq q$.
 This naturally complements the result of the second author in \cite{TH87} that $\mathcal{R}_K[L]$ is Gorenstein if and only if $P$ is pure.

In Section~\ref{sec:onedimensional} we consider one-dimensional local Cohen--Macaulay rings admitting a  canonical module. 
Our first observation is that any one-dimensional almost Gorenstein ring as defined by Barucci and Fr\"oberg \cite{BF} is nearly Gorenstein, see Proposition~\ref{implication}. For this proof we use the description of almost Gorenstein rings as given by Goto et al. \cite{GTT}.
If the one-dimensional Cohen--Macaulay ring  $R$ is of embedding dimension 3 and 
it has the presentation $S/I$ with $S$ a regular local ring of dimension $3$ with $\mu(I)=3$, 
then $R$ is nearly Gorenstein if and only if the entries of the relation matrix of $I$ generate the maximal ideal of $S$, see  Proposition \ref{codim2}.

Unfortunately, in higher dimensions almost Gorenstein rings and nearly Gorenstein rings are not related to each other, as it is shown by examples. However, as one of the reviewers pointed out,  if the Cohen-Macaulay local ring $R$ of minimal multiplicity  admits a canonical module, $R$ has positive dimension and an infinite residue field, then  $R$ is almost Gorenstein once it is nearly Gorenstein. This is shown in  Theorem \ref{thm:minimal}.

\medskip

\section{Basic properties of the trace}
\label{basic}

For an $R$-module $M$, its {\em trace}, denoted $\tr_R(M)$, is the sum of the ideals $\varphi(M)$ with $\varphi \in \Hom_R(M,R)$. Thus,
$$
\tr_R(M)=\sum_{\varphi \in \Hom_R(M,R)}\varphi(M).
$$
When there is no risk of confusion about the ring we simply write $\tr(M)$.

Note that if $M$ is finitely generated, the trace localizes. In other words, $\tr(M)R_P=\tr(M_P)$ for all $P\in \Spec(R)$.

\medskip
If $M_1$ and $M_2$ are isomorphic $R$-modules, then $\tr_R(M_1)=\tr_R(M_2)$.

\medskip

Given any ideal $I\subset R$ of positive grade, we set
$$
I^{-1}=\{x\in Q(R)\:\ xI\subseteq R\},$$
where  $Q(R)$ is the total ring of fractions of $R$.

\begin{Lemma}
\label{lemma:traceideal}
Let $I\subset R$ be an ideal of positive grade. Then $\tr(I)=I\cdot I^{-1}$.
\end{Lemma}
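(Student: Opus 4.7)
The plan is to prove the two containments $I\cdot I^{-1}\subseteq \tr(I)$ and $\tr(I)\subseteq I\cdot I^{-1}$ separately. The first inclusion is the easy direction: every $x\in I^{-1}$ produces an $R$-linear map $\varphi_x\:I\to R$ given by $\varphi_x(a)=xa$, which is well-defined precisely because $xI\subseteq R$. Its image is $xI$, so $xI\subseteq \tr(I)$. Summing over $x\in I^{-1}$ gives $I\cdot I^{-1}\subseteq \tr(I)$.

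For the reverse inclusion I would use the positive grade assumption to select a non-zerodivisor $a\in I$. Given any $\varphi\in\Hom_R(I,R)$ and any $b\in I$, the identity
$a\varphi(b)=\varphi(ab)=b\varphi(a)$
holds in $R$. Since $a$ is a non-zerodivisor, passing to $Q(R)$ we see that $\varphi(b)/b=\varphi(a)/a$ for every $b\in I$. Call this common element $x\in Q(R)$. Then $\varphi$ is nothing but multiplication by $x$, and from $xI=\varphi(I)\subseteq R$ we conclude that $x\in I^{-1}$. Hence $\varphi(I)=xI\subseteq I\cdot I^{-1}$, and summing over all $\varphi$ yields $\tr(I)\subseteq I\cdot I^{-1}$.

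The only real (and mild) obstacle is the step that identifies an arbitrary $\varphi\in\Hom_R(I,R)$ with multiplication by an element of $Q(R)$; this is where the positive grade hypothesis is essential, as it guarantees both the existence of the non-zerodivisor $a$ and the fact that the ratio $\varphi(a)/a$ is a well-defined element of $Q(R)$ which is independent of the chosen $a$. Everything else is formal bookkeeping.
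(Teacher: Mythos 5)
Your proof is correct and follows essentially the same route as the paper: pick a non-zerodivisor in $I$, use the identity $a\varphi(b)=\varphi(ab)=b\varphi(a)$ to realize any $\varphi\in\Hom_R(I,R)$ as multiplication by the fixed element $\varphi(a)/a\in Q(R)$, check that this element lies in $I^{-1}$, and handle the easy inclusion via the multiplication maps $\theta_x$. The only cosmetic difference is that you write the intermediate step as ``$\varphi(b)/b=\varphi(a)/a$ for every $b$'' (which literally requires $b$ regular), whereas the paper records it as $\varphi(b)=b\cdot(\varphi(a)/a)$, valid for all $b\in I$; your conclusion is the same.
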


\begin{proof}
Pick  $b \in I$ and which is regular on $R$. For any $a\in I$ and any $\varphi \in \Hom_R(I, R)$ one has $\varphi(ab)=b \varphi(a)=a\varphi(b)$,
hence $\varphi(a)= a \cdot (\varphi(b)/b)$. We claim that $\varphi(b)/b \in I^{-1}$. Indeed, for any $c\in I$ one has $(\varphi(b)/b)c= (\varphi(b)c)/b=\varphi(bc)/b=(b\varphi(c))/b=\varphi(c) \in R$.
This shows that $\tr(I) \subseteq I\cdot I^{-1}$.

For the reverse inclusion, note that $I \cdot I^{-1}=\sum_{x\in I^{-1}}xI$.  Since $xI$ is the image of the $R$-linear map  $\theta_x:I \to R$ with $\theta_x(a)=xa$, the assertion follows.
\end{proof}

\begin{Remark}
{\em
According to \cite[Exercise 1.2.24]{BH}, for any ideal $I$ of the noetherian ring $R$,  $\grade I \geq 2$ if and only if
the canonical homomorphism $R \to \Hom_R(I,R)$ is an isomorphism. Equivalently, any $R$-module homomorphism  from $I$ into $R$ is just the multiplication by some element in $R$.
Therefore, if $\grade I \geq 2$ one has $\tr(I)=I$.
}
\end{Remark}
We record some properties of the trace ideals that we need for later.

\begin{Proposition}
\label{prop:tensor-trace}
Let $M$ and $N$ be two $R$-modules. Then
$$
\tr(M) \tr(N) \subseteq \tr(M \otimes_R N) \subseteq \tr(M) \cap \tr(N).
$$
\end{Proposition}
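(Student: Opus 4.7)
The plan is to verify each inclusion separately using the universal property of the tensor product.

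For the first inclusion $\tr(M)\tr(N) \subseteq \tr(M\otimes_R N)$, I would pick arbitrary $\varphi \in \Hom_R(M,R)$ and $\psi \in \Hom_R(N,R)$ and take their tensor product $\varphi \otimes \psi$, viewed as an $R$-linear map $M \otimes_R N \to R \otimes_R R = R$, sending $m \otimes n$ to $\varphi(m)\psi(n)$. The image of this map contains every product $\varphi(m)\psi(n)$, and since such products generate $\varphi(M)\psi(N)$ as an ideal of $R$, one obtains $\varphi(M)\psi(N) \subseteq \tr(M \otimes_R N)$. Summing over all $\varphi$ and $\psi$ yields the desired containment.

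For the second inclusion $\tr(M\otimes_R N)\subseteq \tr(M)\cap \tr(N)$, I would take an arbitrary $\theta \in \Hom_R(M \otimes_R N, R)$ and show $\theta(M\otimes_R N)\subseteq \tr(M)$, the analogous argument for $\tr(N)$ being symmetric. For each fixed $n \in N$, the assignment $m \mapsto \theta(m \otimes n)$ is a well-defined $R$-linear homomorphism $M \to R$, so its image lies inside $\tr(M)$. Hence $\theta(m\otimes n)\in \tr(M)$ for every pure tensor. Since $M \otimes_R N$ is generated as an $R$-module by the pure tensors, and $\tr(M)$ is an ideal (hence closed under $R$-linear combinations), this extends to give $\theta(M\otimes_R N)\subseteq \tr(M)$.

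Neither step presents any real obstacle; the main subtlety is simply to use the correct identification $R\otimes_R R = R$ in the first step and to remember that $\tr(M)$ being an ideal is what allows us to pass from the value on pure tensors to the value on arbitrary elements in the second step.
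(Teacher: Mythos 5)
Your proof is correct and follows essentially the same approach as the paper: for the first inclusion, composing $\varphi\otimes\psi$ with the multiplication map $R\otimes_R R\to R$, and for the second, fixing one tensor factor to produce an $R$-linear functional on the other module. The only cosmetic difference is the (symmetric) choice of which factor to fix in the second step, and your explicit remark that $\tr(M)$ being an ideal lets you pass from pure tensors to all of $M\otimes_R N$ is a slight expansion of a step the paper leaves implicit.
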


\begin{proof}
Let $a\in M$, $b\in N$, $\varphi \in \Hom_R(M,R)$ and $\psi \in \Hom_R(N, R)$.
If we denote $\epsilon : R\otimes_R R \to R$ the canonical map letting $\epsilon (u\otimes v)=uv$ for any $u,v$ in $R$, then
$\varphi(a)\psi(b) = (\epsilon \circ  (\varphi \otimes \psi)) (a\otimes b) \in \tr(M\otimes_R N)$. This shows that $\tr(M) \tr(N) \subseteq \tr(M \otimes_R N)$.

Let $a \in M$, $b\in N$ and $\varphi \in \Hom_R(M\otimes _R N, R)$. The map $\psi :N \to R$ given by $\psi(n)= \varphi(a\otimes n)$
is $R$-linear and $\varphi(a\otimes b) = \psi(b) \in \tr(N)$.
From here we get  that $\tr(M \otimes_R N)\subseteq \tr(N)$, and by symmetry, also that $\tr(M \otimes _R N)\subseteq \tr(M) \cap \tr(N)$.
\end{proof}

\begin{Proposition}
\label{prop:ideals-trace}
Let $I$ and $J$ be ideals of positive grade in the ring $R$. Then
$$
\tr(I) \tr(J) \subseteq \tr(IJ)  \subseteq \tr(I) \cap \tr(J).
$$
\end{Proposition}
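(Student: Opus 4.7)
The plan is to reduce both inclusions to Lemma~\ref{lemma:traceideal}, which identifies $\tr(I) = I\cdot I^{-1}$ for any ideal of positive grade. Before applying the lemma to $IJ$, I would first observe that if $a \in I$ and $b \in J$ are regular elements of $R$, then $ab \in IJ$ is regular too, so $IJ$ has positive grade and the lemma applies to all three ideals involved. Thus the claim becomes
$$(I \cdot I^{-1})(J \cdot J^{-1}) \;\subseteq\; IJ \cdot (IJ)^{-1} \;\subseteq\; (I\cdot I^{-1}) \cap (J \cdot J^{-1}).$$

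For the first inclusion, the key step is to show that $I^{-1}J^{-1} \subseteq (IJ)^{-1}$. This is immediate: if $x \in I^{-1}$ and $y \in J^{-1}$, then for any $a \in I$ and $b \in J$ one has $xy \cdot ab = (xa)(yb) \in R \cdot R = R$, and since $IJ$ is generated by such products $ab$, we conclude $xy \in (IJ)^{-1}$. Multiplying on the left by $IJ$ yields $(I \cdot I^{-1})(J \cdot J^{-1}) = IJ \cdot (I^{-1}J^{-1}) \subseteq IJ \cdot (IJ)^{-1}$.

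For the second inclusion, I would take an arbitrary element $z \in (IJ)^{-1}$ and exploit the symmetry between $I$ and $J$. Fix any $a \in I$ and $b \in J$. For every $b' \in J$, the product $(za)b' = z(ab') \in R$ since $ab' \in IJ$; hence $za \in J^{-1}$. Symmetrically $zb \in I^{-1}$. Therefore
$$z \cdot ab \;=\; a \cdot (zb) \;\in\; I \cdot I^{-1}, \qquad z \cdot ab \;=\; b \cdot (za) \;\in\; J \cdot J^{-1},$$
which shows that $IJ \cdot (IJ)^{-1} \subseteq (I \cdot I^{-1}) \cap (J \cdot J^{-1})$, as required. (Alternatively, the second inclusion can be derived by composing Proposition~\ref{prop:tensor-trace} with the canonical surjection $I \otimes_R J \twoheadrightarrow IJ$, which yields $\tr(IJ) \subseteq \tr(I \otimes_R J) \subseteq \tr(I) \cap \tr(J)$.)

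Neither step poses any real obstacle; both are elementary manipulations once Lemma~\ref{lemma:traceideal} puts the trace ideals in the form $I \cdot I^{-1}$. The only point requiring a moment of care is verifying that $IJ$ has positive grade so that the lemma applies on the right-hand side of the product.
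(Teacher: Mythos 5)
Your proof is correct and follows essentially the same route as the paper: reduce both inclusions to Lemma~\ref{lemma:traceideal}, establish $I^{-1}J^{-1} \subseteq (IJ)^{-1}$ for the first, and show for the second that multiplying a generator $ab$ of $IJ$ by any $z \in (IJ)^{-1}$ lands in both $I\cdot I^{-1}$ and $J\cdot J^{-1}$ (the paper simply writes an arbitrary element of $IJ$ as a sum of such generators and treats the terms one by one, as you implicitly do). Your parenthetical alternative for the second inclusion --- composing Proposition~\ref{prop:tensor-trace} with the surjection $I \otimes_R J \twoheadrightarrow IJ$, using that a surjection $M \twoheadrightarrow N$ forces $\tr(N) \subseteq \tr(M)$ --- is a clean shortcut the paper does not take and is worth keeping in mind, since it avoids working with fractions entirely.
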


\begin{proof}
Notice first that $IJ$ has positive grade and   Lemma \ref{lemma:traceideal} applies to it, as well.

Let $a \in I$, $b\in I^{-1}$, $a'\in J$ and $b' \in J^{-1}$. Clearly $ (ab)(a'b')=(aa')(bb')$ and $aa' \in IJ$. We claim that $bb'\in (IJ)^{-1}$.
For that, it is enough to consider $u\in I$, $v\in J$ and show that $(bb')(uv) \in (IJ)^{-1}$. This is the case, since $(bb')(uv)=(bu)(b'v) \in R$.
We conclude that $\tr(I) \tr(J) \subseteq \tr(IJ)$.

For the other inclusion in the text we consider  $x=az $ with $a\in IJ$ and $z\in (IJ)^{-1}$.
One has $a=\sum_\lambda u_\lambda v_\lambda$ with $u_\lambda \in I$ and $v_\lambda \in J$ for all $\lambda$.
Clearly, $v_\lambda z \in I^{-1}$, since for any $b\in I$ one has $b(v_\lambda z)= (b v_\lambda)z \in (IJ)(IJ)^{-1} \subseteq R$.  Therefore, $u_\lambda (v_\lambda z) \in I\cdot I^{-1}$,
and also $az \in I\cdot I^{-1}$. Conclusion follows by Lemma  \ref{lemma:traceideal}.
\end{proof}

Next we  study   how the trace behaves under a base change.

\begin{Lemma}
\label{lemma:trace-changeofrings}
Let $\varphi: R_1 \to R$ be a ring homomorphism, $M$ an $R$-module, and $M_1$ an $R_1$-module. Then
\begin{enumerate}
\item [{\em (i)}] if $\varphi$ is surjective, one has $(\tr_{R_1}M)R \subseteq \tr_R M$;
\item [{\em (ii)}] $(\tr_{R_1}M_1)R \subseteq \tr_R(M_1\otimes_{R_1} R)$;
\item [{\em (iii)}] if $\varphi$ is a flat morphism, $R_1$ is a Noetherian ring and $M_1$ is a finitely generated $R_1$-module, one has  $$(\tr_{R_1}M_1)R = \tr_R(M_1\otimes_{R_1} R).$$
\end{enumerate}
\end{Lemma}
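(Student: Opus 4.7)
My plan is to handle the three parts in order, with a common strategy: compare $R_1$-homomorphisms into $R_1$ with $R$-homomorphisms into $R$ by composing with $\varphi$ in (i) and by base change in (ii) and (iii), then compare the resulting images inside $R$.

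For (i), regard $M$ as an $R_1$-module through $\varphi$. For any $\psi\in\Hom_{R_1}(M,R_1)$, I would form $\varphi\circ\psi:M\to R$. Surjectivity of $\varphi$ is needed precisely to check $R$-linearity: given $r\in R$, choose $r_1\in R_1$ with $\varphi(r_1)=r$; then $rm=r_1m$ in $M$, so $\varphi(\psi(rm))=\varphi(r_1\psi(m))=r\varphi(\psi(m))$. Summing the images over $\psi$ gives $\varphi(\tr_{R_1}M)\subseteq\tr_R M$, whence $(\tr_{R_1}M)R\subseteq\tr_R M$ since $\tr_R M$ is an $R$-ideal.

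For (ii), surjectivity is not required: any $\psi\in\Hom_{R_1}(M_1,R_1)$ induces the $R$-linear base-change map $\psi\otimes_{R_1}R:M_1\otimes_{R_1}R\to R_1\otimes_{R_1}R=R$, whose image as an ideal of $R$ contains $\varphi(\psi(M_1))\cdot R$. Summing over all $\psi$ yields $(\tr_{R_1}M_1)R\subseteq\tr_R(M_1\otimes_{R_1}R)$.

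Part (iii) is the crux. I would invoke the standard base-change isomorphism
$$\Hom_{R_1}(M_1,R_1)\otimes_{R_1}R \;\iso\; \Hom_R(M_1\otimes_{R_1}R,\,R),$$
valid when $M_1$ is finitely presented and $R$ is flat over $R_1$ (obtained by applying $\Hom_{R_1}(-,R_1)$ to a finite presentation $R_1^m\to R_1^n\to M_1\to 0$, tensoring with $R$, and using flatness to identify the result with the analogous complex for $M_1\otimes_{R_1}R$ over $R$). Granting this, every $\alpha\in\Hom_R(M_1\otimes_{R_1}R,R)$ is a finite sum $\sum_i(\psi_i\otimes r_i)$ with $\psi_i\in\Hom_{R_1}(M_1,R_1)$ and $r_i\in R$; the associated map sends $m\otimes 1$ to $\sum_i r_i\varphi(\psi_i(m))\in(\tr_{R_1}M_1)R$. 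Since $M_1\otimes_{R_1}R$ is generated as an $R$-module by elements $m\otimes 1$, this gives $\alpha(M_1\otimes_{R_1}R)\subseteq(\tr_{R_1}M_1)R$, and summing over $\alpha$ produces the inclusion reverse to that in (ii). The only real obstacle is this base-change isomorphism; the finite-presentation hypothesis on $M_1$ together with flatness of $R$ over $R_1$ is exactly what is needed to secure it, and dropping either assumption would break the argument.
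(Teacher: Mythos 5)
Your argument follows the paper's proof step for step: composing with $\varphi$ and checking $R$-linearity via surjectivity in (i), base-changing the homomorphism $\psi$ to $\psi\otimes_{R_1}R$ in (ii), and invoking the base-change isomorphism $\Hom_{R_1}(M_1,R_1)\otimes_{R_1}R\iso\Hom_R(M_1\otimes_{R_1}R,R)$ (valid for finitely presented $M_1$ and flat $R$) together with (ii) to get equality in (iii). If anything, your treatment of (iii) is slightly more careful than the paper's, since you explicitly write a general $\alpha$ as a finite sum $\sum_i\psi_i\otimes r_i$ rather than a single tensor.
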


\begin{proof}
For (i) let us consider an  $R_1$-linear map  $f:M \to R_1$. We claim that the map $g=\varphi \circ f: M \to R$ is $R$-linear.
It is clearly additive. For  $r\in R$ we pick $r_1\in R_1$ with $\varphi(r_1)=r$. Hence $g(rm)=g(r_1\cdot m)=\varphi(f(r_1 \cdot m))=\varphi(r_1 f(x))=\varphi(r_1) \varphi(f(m))=r g(m)$, for all $m$ in $M$.
This shows that $\varphi(\Im f) \subseteq \Im g$, hence $(\tr_{R_1}M)R \subseteq \tr_R M$.

For (ii),  let $f:M_1 \to R_1$ be an $R_1$-linear map. Denoting $g=f\otimes_{R_1} 1_R:M_1 \otimes _{R_1} R \to R_1\otimes_{R_1}R \cong R$ the $R$-linear map given by $g(m\otimes r)=rf(m)$,
we have that $\Im(f)R \subseteq \Im (g)$, hence $(\tr_{R_1}M_1)R \subseteq \tr_R(M_1\otimes_{R_1} R)$.

Under the assumptions of (iii), by \cite[3.E]{Matsumura} there is an isomorphism
$$
\Hom_R(M_1\otimes_{R_1} R, R) \cong \Hom_{R_1}(M_1, R_1) \otimes_{R_1} R.
$$
This implies that the image of any $g\in \Hom_R(M_1\otimes_{R_1} R, R)$ is obtained by extending into $R$ the image of a suitable $f\in \Hom_{R_1}(M_1, R_1)$,
by the method described in the proof of part (ii). This gives that $\tr_R(M_1\otimes_{R_1} R) \subseteq (\tr_{R_1}M_1)R$, and using part (ii), the conclusion follows.
\end{proof}

\begin{Remark}
{\em
The inclusion in part (i) of Lemma \ref{lemma:trace-changeofrings} may be strict. Let $R$ be any ring and $I$ an  ideal of positive grade.
Then $\tr_{R/I}(R/I)=R/I$ and   $\tr_{R}(R/I)=0$, since $\Hom_R(R/I, R) \cong 0:_R I= 0$. 
}
\end{Remark}

\medskip

\section{Nearly Gorenstein rings}
\label{sec:nearly}

 Let $(R,\mm)$  be a Cohen--Macaulay local ring which  admits a canonical module $\omega_R$.
The trace of $\omega_R$ describes the non-Gorenstein locus of $R$. Indeed, one has

\begin{Lemma}
\label{non}
Let $P\in \Spec(R)$. Then $R_P$ is not a Gorenstein ring if and only if $$\tr(\omega_R)\subseteq P.$$
\end{Lemma}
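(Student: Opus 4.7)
The plan is to reduce to the absolute case $P = \mm$, and then use the structural fact that for a Cohen--Macaulay local ring $S$ admitting a canonical module $\omega_S$, one has $S$ Gorenstein if and only if $\tr(\omega_S) = S$.

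First I would invoke localization of the trace: since $\omega_R$ is a finitely generated $R$-module, $\tr(\omega_R)_P = \tr_{R_P}((\omega_R)_P)$, as noted in Section~\ref{basic}. Combined with the well-known compatibility $(\omega_R)_P \cong \omega_{R_P}$ for Cohen--Macaulay local rings admitting a canonical module, this identifies $\tr(\omega_R)R_P$ with $\tr_{R_P}(\omega_{R_P})$. The condition $\tr(\omega_R) \subseteq P$ is equivalent to $\tr_{R_P}(\omega_{R_P}) \subseteq PR_P$, so it suffices to prove the statement for $S := R_P$ with maximal ideal $\nn := PR_P$, namely that $S$ is not Gorenstein if and only if $\tr_S(\omega_S) \subseteq \nn$.

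The equivalent claim is that $S$ is Gorenstein if and only if $\tr_S(\omega_S) = S$. One direction is immediate: if $S$ is Gorenstein, then $\omega_S \cong S$, so $\tr_S(\omega_S)$ contains the image of the identity and hence equals $S$. For the converse, assuming $\tr_S(\omega_S) = S$, since $\tr_S(\omega_S)$ is an ideal and $S$ is local, there must exist some $\varphi \in \Hom_S(\omega_S, S)$ whose image is not contained in $\nn$, i.e.\ $\varphi$ is surjective. Then the short exact sequence
\[
0 \to \Ker\varphi \to \omega_S \to S \to 0
\]
splits because $S$ is free, yielding $\omega_S \cong S \oplus \Ker\varphi$. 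Applying $\Hom_S(-,\omega_S)$ and using the identification $\Hom_S(\omega_S,\omega_S) \cong S$, valid for Cohen--Macaulay local rings with canonical module, one obtains
\[
S \cong \Hom_S(\omega_S,\omega_S) \cong \omega_S \oplus \Hom_S(\Ker\varphi,\omega_S).
\]
Thus $\omega_S$ is a direct summand of $S$. Since $S$ is local, it has no nontrivial idempotents, so $\omega_S \cong S$ (the case $\omega_S = 0$ being excluded), i.e.\ $S$ is Gorenstein.

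The main obstacle is the final indecomposability argument for $S$ as an $S$-module and the verification that the short exact sequence splits because the quotient is free; both rely on standard facts about local rings and on the identification $\Hom_S(\omega_S,\omega_S) \cong S$ for Cohen--Macaulay local rings. Once those ingredients are in place, the localization step and the reduction from arbitrary $P$ to the maximal ideal case are routine.
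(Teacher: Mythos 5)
Your proposal is correct and follows the paper's overall plan: localize the trace, use $(\omega_R)_P\cong\omega_{R_P}$, observe that a surjection $\omega_{R_P}\to R_P$ splits, and deduce $\omega_{R_P}\cong R_P$. The one place you deviate is the closing step. The paper concludes immediately from $\omega_{R_P}\cong R_P\oplus U$ and the fact that $\omega_{R_P}$ is a maximal Cohen--Macaulay module of type $1$: a nonzero direct summand $U$ would force the type to be at least $2$. You instead apply $\Hom_S(-,\omega_S)$ to the splitting, invoke $\End_S(\omega_S)\cong S$, and then use that $S$ has no nontrivial idempotents to get $\omega_S\cong S$. Both arguments are standard and correct; yours trades the type-$1$ computation for the duality $\End_S(\omega_S)\cong S$ together with indecomposability of $S$ as an $S$-module, which is slightly more roundabout but equally rigorous. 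No gaps.
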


\begin{proof}
Let  $\tr(\omega_R)\subseteq P$. Suppose that $R_P$ is Gorenstein.
Then $\omega_{R_P}\iso R_P$, and hence there exists $\varphi\in \Hom_{R_P}(\omega_{R_P}, R_P)$ with $\varphi(\omega_{R_P})=R_P$.
It follows that $\tr(\omega_R)_P=\tr(\omega_{R_P})=R_P$. Thus $\tr(\omega_R)\not\subseteq P$.

Conversely, suppose that $\tr(\omega_R)\not\subseteq P$. Then $\tr(\omega_{R_P})=R_P$. Therefore, there exists a surjective $R_P$-module homomorphism $\varphi: \omega_{R_P}\to R_P$. Since $R_P$ is free, the map $\varphi$ splits, and we get $\omega_{R_P}\iso R_P\dirsum U$.
Since $\omega_{R_P}$ is a maximal Cohen--Macaulay module of type 1, it follows that  $\omega_{R_P}\iso R_P$, and this implies that $R_P$ is Gorenstein.
\end{proof}

If $R$ is Gorenstein on the punctured spectrum $\Spec(R)\setminus\{\mm \}$ we define the {\em residue} of $R$ as the numerical invariant
\begin{equation*}
\res(R)= \ell_R (R/\tr(\omega_R)),
\end{equation*}
where $\ell_R(-)$ denotes the length as an $R$-module.
Ananthnarayan \cite[Corollary 3.8]{Ananth} shows that if moreover $R$ is artinian, then $\res(R)$ is bounded above by the Gorenstein colength of $R$, 
  introduced by him in \cite[Definition 1.2]{Ananth}.

\begin{Definition}
{\em
A   Cohen-Macaulay  local ring  (or positively graded $K$-algebra) $R$  admitting a canonical module $\omega_R$  is called {\em nearly Gorenstein},
if $\tr(\omega_R)$ contains the (graded) maximal ideal  $\mm$ of $R$.
}
\end{Definition}

It follows from this definition that  Gorenstein rings are nearly Gorenstein, and that $R$ is nearly Gorenstein but not Gorenstein, if and only if $\tr(\omega_R)=\mm$.

\medskip
 We recall that the ring $R$ is called generically Gorenstein if $R_P$ is Gorenstein for all minimal prime ideals $P$ of $R$.
Since any field is Gorenstein, we get that domains are generically Gorenstein.
If  we assume that $R$ is generically Gorenstein, then  by \cite[Proposition 3.3.18]{BH},  $\omega_R$ is either   isomorphic to  the whole ring $R$ if the latter is Gorenstein,
or otherwise it can be identified with an ideal of $R$ of grade $1$.
It follows from  Lemma \ref{lemma:traceideal} that $\tr(\omega_R)=\omega_R \cdot \omega_R^{-1}$,
where $\omega_R^{-1}$ is   also called  the anti-canonical ideal of $R$.

\begin{Proposition}
\label{behaviour}
{\em (a)} Let $R$ be nearly Gorenstein. Then $R$ is Gorenstein on the punctured spectrum of $R$.

{\em (b)} Let $\xb=x_1,\ldots,x_r$ be a regular sequence on $R$, and set $\bar{R}=R/(\xb)$. If $R$ is nearly Gorenstein, then so is $\bar{R}$.
The converse does not hold in general.
\end{Proposition}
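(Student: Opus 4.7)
\medskip

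For part (a), the plan is to invoke Lemma~\ref{non} directly. If $P\in \Spec(R)$ is a prime with $P\neq \mm$, then $\mm\not\subseteq P$, so the assumption $\mm\subseteq \tr(\omega_R)$ forces $\tr(\omega_R)\not\subseteq P$. By Lemma~\ref{non}, $R_P$ is Gorenstein. No real work is involved here beyond unpacking the definitions.

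For the forward direction of part (b), my strategy is to relate $\tr_{\bar R}(\omega_{\bar R})$ to $\tr_R(\omega_R)$ via the base change $R\to \bar R$. The key input is the standard fact that a canonical module of $\bar R$ is given by $\omega_{\bar R}\iso \omega_R/\xb \omega_R \iso \omega_R\tensor_R \bar R$, which holds because $\xb$ is $R$-regular and hence also $\omega_R$-regular in the Cohen--Macaulay setting. Then applying Lemma~\ref{lemma:trace-changeofrings}(ii) with $R_1=R$, ambient ring $\bar R$, and module $M_1=\omega_R$ yields
\[
(\tr_R\omega_R)\bar R \;\subseteq\; \tr_{\bar R}(\omega_R\tensor_R \bar R) \;=\; \tr_{\bar R}(\omega_{\bar R}).
\]
From $\mm\subseteq \tr_R(\omega_R)$ we obtain $\bar\mm=\mm\bar R\subseteq \tr_{\bar R}(\omega_{\bar R})$, and hence $\bar R$ is nearly Gorenstein.

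To see that the converse fails in general, the plan is to exhibit an extension that is genuinely not nearly Gorenstein yet becomes so after killing a regular sequence. The cleanest route is to appeal to Proposition~\ref{localanalogue} (or, in the graded case, Corollary~\ref{polynomial}): a power series extension $R[[t]]$ of a positively graded/local Cohen--Macaulay ring $R$ is nearly Gorenstein if and only if $R$ is Gorenstein. Thus, starting from any ring $R$ which is nearly Gorenstein but not Gorenstein (the existence of such rings, for instance certain one-dimensional numerical semigroup rings, is established later in the paper and also by the Huneke--Vraciu examples recalled in the introduction), the ring $R':=R[[t]]$ fails to be nearly Gorenstein, while the regular element $t$ yields $R'/(t)\iso R$, which is nearly Gorenstein. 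The only potential obstacle is making sure such an $R$ is readily available, but this is guaranteed by the results to come in the paper.

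\medskip

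Main obstacle: there is essentially none in (a); in (b) the only delicate point is the identification $\omega_{\bar R}\iso \omega_R\tensor_R \bar R$, which however is standard for regular sequences on Cohen--Macaulay modules and allows Lemma~\ref{lemma:trace-changeofrings}(ii) to do the rest.
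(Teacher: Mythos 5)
Your proof follows the paper's argument step for step: part (a) is the same unpacking of Lemma~\ref{non}; the forward implication in (b) uses the identification $\omega_{\bar R}\iso\omega_R\tensor_R\bar R$ together with Lemma~\ref{lemma:trace-changeofrings}(ii), exactly as the paper does; and the counterexample for the converse is the same power-series construction resting on Proposition~\ref{localanalogue}. Correct and essentially identical to the paper's proof.
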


\begin{proof} (a) is an immediate consequence of Lemma~\ref{non}

(b) Observing that $\omega_{\bar{R}}=\omega_R\tensor_R \bar{R}$ by \cite[Theorem 3.3.5]{BH}, Lemma~\ref{lemma:trace-changeofrings} implies that
$\tr_R(\omega_{R})\bar{R}\subseteq \tr_{\bar{R}}(\omega_{\bar{R}})$. This shows that if $R$ is nearly Gorenstein, then so is $\bar{R}$.

On the other hand, assume that $\bar{R}$ is nearly Gorenstein but not Gorenstein, and let $R=\bar{R}[|x|]$ be the formal power series over $\bar{R}$. Then $x$ is a non-zerodivisor of $R$,  and $R/(x)R=\bar{R}$ is nearly Gorenstein. But $R$ itself is not nearly Gorenstein, see Proposition~\ref{localanalogue}.
\end{proof}

\medskip

\section{Computing  the trace of the canonical module}
\label{sec:computation-trace}

Let $\alpha\: R^p\to R^q$ be an $R$-linear map, and let $A$ be the matrix representing $\alpha$  with respect to some bases of $R^p$ and $R^q$. We denote by $I_t(A)$ the ideal of $t$-minors of $A$. This ideal  depends only on $\alpha$ and not on the chosen  bases. Therefore, we also write $I_t(\alpha)$ for $I_t(A)$.
Following Vasconcelos  \cite[Remark 3.3]{Vasconcelos}, the trace of a module can be computed as follows.

\begin{Proposition}
\label{vasconcelos}
Let $  F_1    \stackrel{\varphi}{\rightarrow}  F_0\to M$ be a  free presentation of  the $R$-module  $M$.
Let $\varphi^*$ denote the dual of $\varphi$, and consider the beginning of a   free resolution
\[
\begin{CD}
G@>\alpha >> F_0^* @> \varphi^* >> F_1^* @>>> D(M) @>>> 0.
\end{CD}
\]
of the Auslander dual $D(M)$  of $M$ (which is defined to be the cokernel of $\varphi^*$).  Then $\tr(M)=I_1(\alpha)$.
\end{Proposition}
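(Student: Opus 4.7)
The plan is to deduce the equality $\tr(M)=I_1(\alpha)$ from the identification of $\Hom_R(M,R)$ with $\ker(\varphi^*)=\Im(\alpha)$, together with a coordinate description of the image of a linear functional on a free module.

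First, I would apply $\Hom_R(-,R)$ to the presentation $F_1\xrightarrow{\varphi} F_0\to M\to 0$. Left-exactness of $\Hom_R(-,R)$ yields the exact sequence
\[
0\to \Hom_R(M,R)\to F_0^*\xrightarrow{\varphi^*} F_1^*,
\]
so $\Hom_R(M,R)$ is identified with $\ker(\varphi^*)$. By the assumed exactness of $G\xrightarrow{\alpha} F_0^*\xrightarrow{\varphi^*}F_1^*\to D(M)\to 0$ at $F_0^*$, this kernel is precisely $\Im(\alpha)$. Under this identification, a functional $f\in \Im(\alpha)\subseteq F_0^*$ corresponds to the $R$-linear map $\bar f\colon M\to R$ uniquely determined by the commutative diagram with surjection $F_0\twoheadrightarrow M$; since this surjection is surjective, $\bar f(M)=f(F_0)$ as ideals of $R$.

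Next I would pass to coordinates. Fix a basis $e_1,\dots,e_n$ of $F_0$ with dual basis $e_1^*,\dots,e_n^*$ of $F_0^*$. For any $f=\sum_i a_i e_i^*\in F_0^*$, the image $f(F_0)$ is the ideal $(a_1,\dots,a_n)\subseteq R$. Consequently,
\[
\tr(M)=\sum_{f\in \Im(\alpha)} f(F_0)
\]
equals the ideal generated by all coordinates (with respect to $e_1^*,\dots,e_n^*$) of all elements of $\Im(\alpha)$. Choosing a basis $g_1,\dots,g_m$ of $G$ and writing $\alpha(g_j)=\sum_i a_{ij}e_i^*$, the matrix of $\alpha$ has entries $a_{ij}$. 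The coordinates of an arbitrary element $\alpha\bigl(\sum_j r_j g_j\bigr)=\sum_i\bigl(\sum_j r_j a_{ij}\bigr)e_i^*$ are $R$-linear combinations of the $a_{ij}$, and varying the $r_j$ we obtain every $a_{ij}$. Hence the ideal collected this way is exactly $I_1(\alpha)$, establishing $\tr(M)=I_1(\alpha)$.

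No step looks truly delicate: the mild subtlety is simply that the image of $f\colon F_0\to R$ in $R$ coincides with the image of the induced $\bar f\colon M\to R$, which relies on $F_0\twoheadrightarrow M$ being surjective and not on any minimality assumption. Minimality of the resolutions is thus not essential for the identity; it only ensures that the ideal $I_1(\alpha)$ is computed from an economical presentation.
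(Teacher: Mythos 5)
Your proof is correct and follows essentially the same route as the paper: both identify $\Hom_R(M,R)$ with $\ker(\varphi^*)=\Im(\alpha)$ and then read off $\tr(M)$ as the ideal generated by the coordinates (with respect to the dual basis) of elements of $\Im(\alpha)$, which is exactly $I_1(\alpha)$. The only cosmetic difference is that the paper treats the two inclusions separately (via $\beta^*(1)$ for $\subseteq$ and via the columns $\alpha(g_k)$ for $\supseteq$), whereas you package both into the single observation that the coordinates of $\Im(\alpha)$ generate $I_1(\alpha)$; your closing remark that minimality is not needed for the identity is also correct and a nice point.
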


\begin{proof}
 The $R$-module homomorphisms $\gamma \: M\to R$ are induced by $R$-module homomorphisms $\beta\: F_0\to R$ with $\beta\circ \varphi=0$. Thus $\tr(M)=\sum \beta(F_0)$,  where the sum is taken over all $R$-module homomorphisms $\beta\: F_0\to R$ with $\beta\circ \varphi=0$. Let $\beta$ be such an $R$-module homomorphism, and let $\B\: e_1,\ldots,e_m$ be a basis of $F_0$. Then $\beta(F_0)$ is the ideal $(\beta(e_1),\ldots,\beta(e_m))\subseteq R$.
Notice that $\beta^*(1)=\sum_{i=1}^m\beta(e_i)e_i^*$, where $e_1^*,\ldots,e_m^*$ is the dual basis of $\B$. Since $\varphi^*\circ \beta^* =(\beta\circ \varphi)^*=0$, it follows that $\sum_{i=1}^m\beta(e_i)e_i^*$ is in the image of $\alpha$, and this shows that $\tr(M)\subseteq I_1(\alpha)$.

Conversely,  let $g_1,\ldots,g_r$ be a basis of $G$ and let $\alpha(g_k)=\sum_{j=1}^ma_{jk}e_j^*$ for $k=1,\ldots,r$.
Then $I_1(\alpha)$ is the sum of the ideals  $ J_k$  with   $J_k=(a_{1k},\ldots,a_{mk})$ for $k=1,\ldots,r$.
For such $k$, let $\beta_k:\ F_0\to R$ be the $R$-module homomorphism defined by $\beta_k(e_j)=a_{jk}$ for $j=1,\ldots,m$.
Since $\sum_{j=1}^ma_{jk}e_j^*$ is in the kernel of $\varphi^*$, it follows that $\beta_k\circ \varphi=0$.
Therefore, $J_k\subseteq \tr(M)$ for all $k$.  This shows that $I_1(\alpha)\subseteq \tr(M)$.
\end{proof}

\begin{Corollary}
\label{first}
Let $(S, \nn)$ be a regular local ring and let
\[
\begin{CD}
\FF\: 0\To F_p@> \varphi_p >>  F_{p-1}\To  \cdots \To  F_1\To F_0 \To R\To 0
\end{CD}
\]
be a minimal free $S$-resolution of   the Cohen-Macaulay ring $R=S/J$  with $J\subseteq \nn^2$. 
Let $e_1,\ldots,e_t$ be a  basis of $F_p$. Suppose that  for $i=1,\ldots,s$  the elements $\sum_{j=1}^tr_{ij}e_j$ generate the
kernel of
\[
\begin{CD}
F_p\tensor R @> \psi_p >>  F_{p-1}\tensor R,
\end{CD}
\]
where $\psi_p =\varphi_p \tensor R$. Then $\tr(\omega_R)$ is generated by  the elements $r_{ij}$ with  $i=1,\ldots, s$ and $j=1,\ldots,t$.
\end{Corollary}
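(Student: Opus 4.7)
The plan is to apply Proposition~\ref{vasconcelos} to $M=\omega_R$, which requires first producing a minimal $R$-module presentation of $\omega_R$ and then extending one step further a minimal free resolution of its Auslander dual. Since $S$ is regular and $R=S/J$ is Cohen--Macaulay of codimension $p$, local duality gives $\omega_R \iso \Ext_S^p(R,S)$. Dualizing the resolution $\FF$ with $\Hom_S(-,S)$ and taking cohomology at the top spot yields the $S$-module presentation
\[
F_{p-1}^* \stackrel{\varphi_p^*}{\To} F_p^* \To \omega_R \To 0.
\]
Since $J$ annihilates $\omega_R$, tensoring with $R=S/J$ preserves the cokernel and produces an $R$-module presentation
\[
F_{p-1}^* \tensor R \stackrel{\psi_p^*}{\To} F_p^* \tensor R \To \omega_R \To 0,
\]
where $\psi_p^* = \varphi_p^* \tensor R$. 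This presentation is minimal because the original resolution $\FF$ is minimal, so all entries of $\varphi_p^*$ already lie in the maximal ideal of $S$ and hence their images in $R$ lie in the maximal ideal.

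Next, I would compute the Auslander dual $D(\omega_R)$ from this presentation. Under the canonical identifications $(F_i^*\tensor R)^* \iso F_i\tensor R$ for $i=p-1,p$, the double dual of $\psi_p^*$ is just $\psi_p = \varphi_p\tensor R$, so
\[
F_p\tensor R \stackrel{\psi_p}{\To} F_{p-1}\tensor R \To D(\omega_R) \To 0.
\]
To extend one more step in a free resolution of $D(\omega_R)$, I need a map $\alpha\: G\To F_p\tensor R$ whose image is $\ker(\psi_p)$. By hypothesis this kernel is generated by the vectors $\sum_{j=1}^t r_{ij}e_j$ for $i=1,\dots,s$, so we may take $G = R^s$ and $\alpha$ represented by the matrix with entries $r_{ij}$. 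Proposition~\ref{vasconcelos} then gives $\tr(\omega_R) = I_1(\alpha)$, which is exactly the ideal generated by the $r_{ij}$.

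The main technical point is the passage from the $\Ext$ calculation over $S$ to a minimal $R$-module presentation of $\omega_R$, together with the identification of the double dual with $\psi_p$ after the various tensorings. Once this bookkeeping is settled, the result is a direct consequence of Proposition~\ref{vasconcelos}. A minor but worth-mentioning observation is that $I_1(\alpha)$ depends only on the image of $\alpha$, so one does not need the generators $\sum_j r_{ij}e_j$ of $\ker(\psi_p)$ to be minimal for the conclusion to hold; adding redundant generators only adds $R$-linear combinations of existing entries, which does not enlarge the ideal.
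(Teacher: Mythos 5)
Your proof is correct and follows the same route as the paper: identify the $R$-module presentation of $\omega_R$ by dualizing the last map of $\FF$ and tensoring down to $R$, then apply Proposition~\ref{vasconcelos} with $G=R^s$ and $\alpha$ the matrix $(r_{ij})$. The paper simply cites \cite[Corollary 3.3.9]{BH} for the presentation of $\omega_R$ and leaves the bookkeeping (the reflexive identification making $\psi_p^{**}=\psi_p$, minimality of the presentation) implicit, whereas you spell it out; your closing remark that minimality of the generating set of $\ker(\psi_p)$ is unnecessary because $I_1(\alpha)$ only depends on $\Im(\alpha)$ is accurate and matches how the hypothesis of the corollary is actually phrased.
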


\begin{proof}
The canonical module $\omega_R$ of $R$  can be computed as the cokernel of the map $\varphi^*_p\: F_{p-1}^*\to  F_{p}^*$, see \cite[Corollary 3.3.9]{BH}.
Hence, as an $R$-module its presentation is given as follows:
\[
\begin{CD}
 F_{p-1}^*\tensor R@> \varphi_p^*\tensor R >>  F_{p}^*\tensor R \To \omega_R\To 0.
\end{CD}
\]
Thus the desired conclusion follows from Proposition~\ref{vasconcelos}.
\end{proof}

Recall that  the (Cohen--Macaulay) type  of a Cohen--Macaulay  local  ring $(R, \mm )$ of dimension $d$ is the  number $\dim_{R/\mm}\Ext_R^d(R/\mm, R)$.

In the following corollaries we refer to the notation of Corollary~\ref{first}.

\begin{Corollary}
\label{second}
Let $R$  be    Cohen--Macaulay   of type $t$, and assume that $R$ is generically Gorenstein. Then  $I_{t-1}(\psi_p)\subseteq \tr(\omega_R)$.
\end{Corollary}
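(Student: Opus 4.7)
The plan is to combine two ingredients: first to establish that $I_t(\psi_p)=0$ inside $R$, and then to invoke Corollary~\ref{first} after producing enough elements of $\ker\psi_p$ whose coordinates realise the $(t-1)$-minors of $\psi_p$.

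For the vanishing, I would exploit the presentation
\[
F_{p-1}^*\tensor R \xrightarrow{\psi_p^*}  F_p^*\tensor R\To \omega_R\To 0.
\]
The zeroth Fitting ideal of $\omega_R$ over $R$ read off from this presentation is $I_t(\psi_p^*)=I_t(\psi_p)$, and Fitting ideals annihilate the module, so $I_t(\psi_p)\subseteq\Ann_R(\omega_R)$. But the canonical module is always faithful: any element $a\in\Ann_R(\omega_R)$ acts as the zero endomorphism of $\omega_R$, and the identification $\Hom_R(\omega_R,\omega_R)\iso R$ forces $a=0$. Hence $I_t(\psi_p)=0$ in $R$.

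Next I would run the classical cofactor trick. Let $A$ be the $m\times t$ matrix of $\psi_p$, where $m=\rank F_{p-1}$. For any choice of rows $i_1<\cdots<i_{t-1}$ of $A$, write $A'$ for the $(t-1)\times t$ submatrix on those rows, and set $v_k=(-1)^k\det A'_k$, where $A'_k$ is $A'$ with its $k$-th column deleted; let $v=(v_1,\ldots,v_t)\in R^t$. Cofactor expansion along a repeated row gives $(Av)_{i_j}=0$ for each chosen row, while for any other row $i$ the coordinate $(Av)_i$ is, up to sign, a $t$-minor of $A$ and hence lies in $I_t(\psi_p)=0$. Therefore $v\in\ker\psi_p$, and by Corollary~\ref{first} each entry $v_k$ belongs to $\tr(\omega_R)$. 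Letting the tuple $i_1<\cdots<i_{t-1}$ range over all $(t-1)$-subsets of $\{1,\ldots,m\}$, the coordinates $v_k$ realise every $(t-1)$-minor of $A$ up to sign, yielding $I_{t-1}(\psi_p)\subseteq\tr(\omega_R)$.

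The main obstacle is the first step. Once $I_t(\psi_p)=0$ is in hand, the cofactor manipulation is purely formal linear algebra. The hypothesis that $R$ be generically Gorenstein ensures that $\omega_R$ has rank one, so that $\psi_p$ has generic rank $t-1$ and the vanishing is at least consistent on fractions; however the cleanest route above goes through faithfulness of the canonical module, which holds with no extra hypothesis.
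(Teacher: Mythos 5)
Your proof is correct, and the core cofactor manipulation is identical in substance to the paper's: both start from $I_t(\psi_p)=0$, form a $t\times(t-1)$ block from the matrix of $\psi_p^*$ (equivalently the transpose of your $A'$), and use the signed maximal minors of that block to build an element of $\Hom_R(\omega_R,R)$ whose image contains the desired $(t-1)$-minor. The paper phrases this as constructing a map $\alpha\colon R^t\to R$ that kills $\Im(\psi_p^*)$ and hence factors through $\omega_R$; you phrase it as producing a vector in $\ker\psi_p$ and invoking Corollary~\ref{first}. These are the same thing.

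Where you genuinely diverge is in establishing $I_t(\psi_p)=0$. The paper uses the generically Gorenstein hypothesis directly: $\omega_R$ then has rank $1$, so the relation module $U=\Im(\psi_p^*)$ has rank $t-1$, and by \cite[Proposition 1.4.11]{BH} the $t\times t$ minors of the relation matrix vanish. You instead observe that $I_t(\psi_p)=I_t(\psi_p^*)=\mathrm{Fitt}_0(\omega_R)\subseteq\Ann_R(\omega_R)$ and appeal to faithfulness of the canonical module, via $\Hom_R(\omega_R,\omega_R)\cong R$. Your route is cleaner in that it establishes the vanishing without the generically Gorenstein assumption at all, which makes explicit that that hypothesis is only needed to relate $\tr(\omega_R)$ to $\omega_R\cdot\omega_R^{-1}$ elsewhere in the paper, not for this inclusion. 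Both arguments are valid, and what your version buys is a slightly more general and more self-contained proof of the vanishing step.
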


\begin{proof}
Since $\varphi_p^*\tensor R=\psi_p^*$, it follows that $I_{t-1}(\psi_p)=I_{t-1}(\psi_p^*)=I_{t-1}(\varphi_p^*\tensor R)$.

Let $U=\Im(\varphi_p^*\tensor R)$.  Then we obtain an exact sequence $0\to U\to R^t\to \omega_R\to 0$.
Here we identified $F_{p}^*\tensor R$ with $R^t$, since $F_p$ is free of a rank $t$.

Since $R$ is generically Gorenstein, $\omega_R$ is an ideal of rank 1, so that $U$ is a module  of rank $t-1$. 
Let $A$ be the
$t\times r$-matrix  which describes $U$ as a submodule of $R^t$. Then  $A$  is just the relation matrix of $\omega_R$ and  $\rank A=t-1$. 
This implies (\cite[Proposition 1.4.11]{BH}) that all $t$-minors of $A$ vanish.

We need to prove that $I_{t-1}(A) \subseteq \tr(\omega_R)$.

Let $\Delta$ be any $(t-1)$-minor of $A$, and let $B$ be the $t \times (t-1)$ submatrix of $A$ whose columns are involved in computing $\Delta$.
For any $j=1,\dots, r$, adding the column  $(a_{1j}, \dots, a_{tj})^T$ of $A$ to $B$, we obtain a $t\times t$-matrix $B'$ whose determinant  is zero, since $I_t(A)=0$.
Expanding $B'$ with respect to the new column we have added to $B$, 
we see that $\sum_{i=1}^t(-1)^{i}a_{ij}\Delta_i=0$,  where $\Delta_i$ is the determinant of the $(t-1)\times (t-1)$-matrix which is obtained from $B$ by dropping the $i$th row.

In conclusion, we see that $U$ is in the kernel of the map $\alpha:R^t\to R$ which assigns to the $i$th canonical basis vector of $R^t$
the element $(-1)^i\Delta_i$. 
This shows that $(\Delta_1, \dots, \Delta_t)R\subseteq \Im (\bar{\alpha})$, where $\bar\alpha:R^t/U \to R$ is the $R$-linear map   induced by $\alpha$.  Hence $\Delta_i \in \tr(\omega_R)$ for any $i=1,\dots, t$. 
In particular, $\Delta \in \tr(\omega_R)$. This finishes the proof.
\end{proof}

\begin{Corollary}
\label{third}
Suppose $R$ is a  generically Gorenstein ring  of type $2$. Then 
$$\tr(\omega_R)=I_1(\psi_p)   \text{ and } \omega_R^{-1} \cong \Im(\psi^*_p). 
$$
In particular, $\mu(\omega_R^{-1}) \leq \rank F_{p-1}$.
\end{Corollary}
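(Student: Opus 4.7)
The plan is to combine Corollary~\ref{second}, which already yields $I_1(\psi_p)\subseteq \tr(\omega_R)$ in the type $t=2$ case, with an explicit description of $\omega_R^{-1}$ exploiting the domain hypothesis. By Lemma~\ref{lemma:traceideal}, $\tr(\omega_R)=\omega_R\cdot \omega_R^{-1}$, so the first assertion will follow once I pin down generators of $\omega_R^{-1}$ in terms of the entries of $\psi_p^*$.

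Since $R$ is a domain, $\omega_R$ can be identified with an ideal $I\subset R$ of grade~$1$. Under the identification $R^2=F_p^*\tensor R$ (using $\rank F_p=t=2$), the presentation coming from the $S$-resolution has the form $\pi\colon R^2 \twoheadrightarrow \omega_R\iso I$, sending $e_i\mapsto u_i$ for some generators $u_1,u_2$ of $I$ (which minimally generate $I$ since $\mu(\omega_R)=t=2$), and whose kernel is exactly $\Im(\psi_p^*)$. An element $(r,s)\in R^2$ lies in $\ker\pi$ iff $ru_1+su_2=0$; in the domain $R$ this forces $(r,s)=(\lambda u_2,-\lambda u_1)$ for a unique $\lambda\in Q(R)$, and the condition $(r,s)\in R^2$ becomes $\lambda u_1,\lambda u_2\in R$, i.e.\ $\lambda\in (R:_{Q(R)}I)=I^{-1}=\omega_R^{-1}$. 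Thus $\lambda \mapsto (\lambda u_2,-\lambda u_1)$ defines an $R$-linear isomorphism $\omega_R^{-1}\iso \Im(\psi_p^*)$, which is the second assertion.

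If $(a_j,b_j)$ denotes the $j$-th column of $\psi_p^*$ for $j=1,\dots,n$ with $n=\rank F_{p-1}$, then under this isomorphism $(a_j,b_j)$ corresponds to $\lambda_j=a_j/u_2=-b_j/u_1\in \omega_R^{-1}$, and since the columns generate $\Im(\psi_p^*)$, the $\lambda_j$ generate $\omega_R^{-1}$. Consequently
\[
\tr(\omega_R)=I\cdot I^{-1}=(u_1,u_2)(\lambda_1,\dots,\lambda_n)=(u_i\lambda_j: i=1,2,\ j=1,\dots,n)=(a_j,b_j: j)=I_1(\psi_p),
\]
which gives the first assertion. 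For the final equality $\mu(\omega_R^{-1})=\rank F_{p-1}$, I would exploit that $\FF^*$ is a minimal $S$-free resolution of $\omega_R$ (since $R$ is Cohen--Macaulay, $\Ext^i_S(R,S)=0$ for $i\neq p$), and that tensoring its tail $F_{p-1}^*\to F_p^*\to \omega_R\to 0$ with $R$ yields a minimal $R$-presentation of $\omega_R$, so that the $n$ columns of $\psi_p^*$ form a minimal generating set of $\Im(\psi_p^*)\iso \omega_R^{-1}$.

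The main obstacle will be justifying this last minimality claim. The entries of $\psi_p^*$ lie in $\mm_R$ by minimality of $\FF$, which makes $R^2\to \omega_R$ a minimal surjection, but the equality $\mu(\Im(\psi_p^*))=\rank F_{p-1}$ further requires $\ker(\psi_p^*)\subseteq \mm R^n$; I expect this to follow from a standard change-of-rings analysis at the tail of $\FF^*$.
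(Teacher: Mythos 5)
Your argument for $\omega_R^{-1}\cong\Im(\psi_p^*)$ and $\tr(\omega_R)=I_1(\psi_p)$ is correct and essentially matches the paper's own proof: the paper shows $\tr(\omega_R)\subseteq I_1(\psi_p)$ by observing that $(xf_2,-xf_1)$ is a syzygy of $(f_1,f_2)$ for each $x\in\omega_R^{-1}$, while you compute the product $\omega_R\cdot\omega_R^{-1}$ directly through the generators $\lambda_j$; the underlying mechanism (identifying $\ker\pi$ with $\omega_R^{-1}$ in the domain $Q(R)$) is the same.

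For $\mu(\omega_R^{-1})=\rank F_{p-1}$ you correctly isolate the missing step, namely $\ker(\psi_p^*)\subseteq \mm\,(F_{p-1}^*\tensor R)$, so that the $\rank F_{p-1}$ columns of $\psi_p^*$ are a \emph{minimal} generating system of $\Im(\psi_p^*)\cong\omega_R^{-1}$. The paper does not justify this either; it simply declares $R^r\to R^2\to\omega_R\to 0$ to be ``the beginning of the minimal free resolution''. But your expectation that this follows from a routine change-of-rings analysis cannot be right, because the assertion is false in general. Take $H=\langle 6,7,8,9\rangle$ and $R=K[H]$, a type-$2$ domain of embedding dimension $4$ with $\PF(H)=\{10,11\}$. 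Here $p=3$ and the $S$-Betti numbers of $R$ are $1,4,5,2$, so $\rank F_{p-1}=5$; on the other hand $\omega_R=(t^{-10},t^{-11})$ and a direct check of exponents shows that $\omega_R^{-1}$ is minimally generated by $t^{17},t^{18},t^{19}$, so $\mu(\omega_R^{-1})=3<5$. Only $\mu(\omega_R^{-1})\le\rank F_{p-1}$ holds unconditionally. The equality is valid in the Hilbert--Burch case $p=2$ (the only case where the paper actually invokes it, in Propositions~\ref{codim2} and~\ref{type2} and Corollary~\ref{maxtr}), and there one can argue using the determinantal structure: an element of $\ker(\psi_2^*)$ with a unit coordinate would express one column of the $2\times 3$ matrix $\psi_2^*$ as an $R$-combination of the other two, forcing two of the three $2\times2$ minors of the Hilbert--Burch matrix to become proportional modulo $\nn J$ and contradicting $\mu(J)=3$. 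That argument does not extend to $p\ge 3$, which is exactly where the counterexample lives.
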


\begin{proof}
Regarding the first equation, we only need to show that $\tr(\omega_R)\subseteq I_1(A)$, 
since $I_1(\psi_p)=I_1(A)$ and since the other inclusion is already shown in Corollary~\ref{second}.

 As $R$ is generically Gorenstein, but not a Gorenstein ring, by \cite[Proposition 3.3.18]{BH} we may assume that $\omega_R$ is an ideal in $R$ of height one.
It follows that $\tr(\omega_R)=\omega_R \cdot \omega_R^{-1}$. Let $f_1$ and $f_2$ be the generators of $\omega_R$.
We may assume that $A$ is the relation matrix of $\omega_R$ with respect to these generators.
Then  it suffices to show that the elements $xf_1$ and $xf_2$ belong to $I_1(A)$ for any $x\in \omega_R^{-1}$.
To see this, let $a=xf_1$ and $b=xf_2$. 
Since  $bf_1-af_2=0$  we get that 
 the vector $(b,-a)$ is in the $R$-module spanned by the columns of the  matrix $A$. This implies that  $a,b\in I_1(\psi_p)$.

Letting $r=\rank F_{p-1}$,
$$
\begin{CD}
R^r  @> \psi_p^* >>R^2 \To \omega_R \To 0
\end{CD}
$$
is the beginning of  a (not necessarily minimal) free resolution for the ideal $\omega_R=(f_1, f_2)$. 
Then $U=\Im (\psi_p^*) =\{ (a,b) \in R^2: af_1+bf_2=0\}$.
It is an easy exercise to check that the map $\omega_R^{-1}\to U$ with $g\mapsto(gf_2, -gf_1)$ establishes an isomorphism, 
hence also $\mu(\omega_R^{-1})=\mu(U) \leq \rank F_{p-1}$.
\end{proof}

By using Corollary~\ref{third} we obtain the following characterization of  nearly Gorenstein rings of type $2$ and  having  positive dimension.

\begin{Corollary}
\label{typetwo}
Let $R$ be  a   ring  of type $2$ with $\dim R >0$. Then $R$ is nearly Gorenstein if and only if $I_1(\varphi_p)=\nn$, where $\nn$ is  the maximal ideal of $S$.
\end{Corollary}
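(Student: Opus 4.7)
The plan is to use Corollary~\ref{third} to translate $\tr(\omega_R)$ back into the language of the $S$-resolution, and then to show that the defining ideal $J$ is absorbed into $I_1(\varphi_p)$. Since $\psi_p = \varphi_p \otimes_S R$, the entries of the matrix of $\psi_p$ are the images modulo $J$ of the entries of the matrix of $\varphi_p$, so Corollary~\ref{third} gives
\[
\tr(\omega_R) = I_1(\psi_p) = (I_1(\varphi_p) + J)/J
\]
as an ideal of $R = S/J$. Because $\mm = \nn/J$ and the minimality of $\FF$ forces $I_1(\varphi_p) \subseteq \nn$, the near-Gorenstein condition $\mm \subseteq \tr(\omega_R)$ is equivalent to the equality $I_1(\varphi_p) + J = \nn$ of ideals in $S$.

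The crux is then to promote this to $I_1(\varphi_p) = \nn$ by showing $J \subseteq I_1(\varphi_p)$. For this I would exploit that $\omega_R = \Coker(\varphi_p^*)$ is an $R$-module, so $J$ annihilates $\omega_R$; equivalently, $J\, F_p^* \subseteq \Im(\varphi_p^*)$. Writing $A$ for the matrix of $\varphi_p$, with $s = \rank F_{p-1}$ and $t = \rank F_p$, this containment says that for every $j \in J$ there exists an $s \times t$ matrix $W$ over $S$ satisfying $A^{T} W = j\, I_t$. Comparing the $(i,i)$-entries of both sides yields $j = \sum_{k=1}^s A_{ki} W_{ki} \in I_1(A) = I_1(\varphi_p)$, so indeed $J \subseteq I_1(\varphi_p)$.

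Combining the two steps, $R$ is nearly Gorenstein if and only if $\nn = I_1(\varphi_p) + J = I_1(\varphi_p)$, which is the desired equivalence. The substantive new input is the containment $J \subseteq I_1(\varphi_p)$; everything else amounts to bookkeeping with the presentation of $\omega_R$ recorded in Corollary~\ref{first} and the explicit formula $\tr(\omega_R) = I_1(\psi_p)$ from Corollary~\ref{third}.
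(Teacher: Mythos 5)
Your proof is correct, and it diverges from the paper's in the final step in a way worth noting. You and the paper agree through the reduction: both use Corollary~\ref{third} to rewrite $\tr(\omega_R) = I_1(\psi_p) = (I_1(\varphi_p)+J)/J$ and translate near-Gorensteinness into $I_1(\varphi_p)+J = \nn$ (using minimality of $\FF$ to ensure $I_1(\varphi_p)\subseteq\nn$). To pass from there to $I_1(\varphi_p)=\nn$, the paper simply cites Nakayama's lemma; unpacking that requires $J\subseteq\nn^2$, so that $\nn = I_1(\varphi_p)+J \subseteq I_1(\varphi_p)+\nn^2$, and Nakayama then kills $\nn/I_1(\varphi_p)$. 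You instead prove the absorption $J\subseteq I_1(\varphi_p)$ outright, by observing that $J$ annihilates $\omega_R = \Coker(\varphi_p^*)$ and extracting from $A^T W = j I_t$ the diagonal identity $j=\sum_k A_{ki}W_{ki}\in I_1(\varphi_p)$. This is the general fact $\Ann_S\bigl(\Coker(\alpha)\bigr)\subseteq I_1(\alpha)$, and it buys you the conclusion without any appeal to minimality of the presentation of $R$ (i.e.\ without $J\subseteq\nn^2$), so your argument is both more explicit and slightly more robust than the paper's one-line invocation of Nakayama.
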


\begin{proof}
 If $R$ is nearly Gorenstein, then it is Gorenstein on the punctured spectrum (see Proposition \ref{behaviour}(a)), which implies that $R$ is generically Gorenstein. Corollary~\ref{third} gives that $I_1(\psi_p)=\mm$, hence  $I_1(\varphi_p)+J=\nn+J=\nn $.
Since $J\subseteq \nn^2$ we get that $I_1(\varphi_p)+ \nn^2 =\nn$, and by Nakayama's lemma it follows that $I_1(\varphi_p)=\nn$.

Conversely, let us assume that  $I_1(\varphi_p)=\nn$. 
Then $I_1(\psi_p)=\mm$ and it follows from Corollary \ref{third} that  $R$ is nearly Gorenstein once we show that $R$ is generically Gorenstein. Let $\pp$ be a minimal prime ideal over $J$.  
As $(I_1(\psi_p))_\pp= R_\pp$, the last step in the resolution $\FF_\pp$ is not minimal.  
Since $R_\pp$ is Cohen-Macaulay and $\height J_\pp=\height J$, it follows that $\projdim_{S_\pp}R_\pp=\projdim_S R=p$. 
Therefore, the type of $R_\pp$, which is the $p$-th Betti number in a minimal free resolution of $R_\pp$ over $S_\pp$, is strictly less than the rank of $F_p$, which equals $2$. Hence $R_\pp$ is a Gorenstein ring.
\end{proof}

\medskip

\section{The canonical trace ideal for special constructions of algebras}
\label{sec:special}

\subsection{Tensor products}

In this section we describe the trace of a tensor product of two $K$-algebras as the product of the trace ideals of the factors.

\begin{Proposition}
\label{prop:zero-tor}
Let $R_1$ and $R_2$ be  $K$-algebras over the field $K$, and set $R=R_1\otimes_K R_2$.
For any finitely presented 
  $R_1$-module $M_1$ and $R_2$-module $M_2$ one has that
\begin{enumerate}
\item[{\em (i)}] $\Tor_i^R(M_1\otimes_{R_1}R, M_2\otimes_{R_2}R)=0$ for all $i>0$;
\item[{\em (ii)}] $\tr_{R_1}(M_1)R \cdot \tr_{R_2} (M_2)R = \tr_R(M_1\otimes_K M_2) =\tr_{R_1} (M_1)R \cap \tr_{R_2} (M_2)R$.
\end{enumerate}
\end{Proposition}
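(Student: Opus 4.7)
The plan is to treat (i) and (ii) separately, using (i) to feed (ii) via the sandwich inequality from Proposition~\ref{prop:tensor-trace}.

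For part (i), first I would rewrite the two modules as $M_1\otimes_{R_1}R \cong M_1\otimes_K R_2$ and $M_2\otimes_{R_2}R \cong R_1\otimes_K M_2$, and observe that their tensor product over $R$ is $M_1\otimes_K M_2$. Next, pick a free resolution $F_\bullet \to M_1$ over $R_1$. Because $K$ is a field the functor $-\otimes_K R_2$ is exact, so $F_\bullet\otimes_K R_2 \to M_1\otimes_K R_2$ is a free $R$-resolution of $M_1\otimes_{R_1}R$. Tensoring this resolution over $R$ with $R_1\otimes_K M_2$ and simplifying gives the complex $F_\bullet\otimes_K M_2$. Since $M_2$ is $K$-flat, its homology is concentrated in degree $0$, which yields $\Tor_i^R(M_1\otimes_{R_1}R,M_2\otimes_{R_2}R)=0$ for $i>0$ (and recovers $M_1\otimes_K M_2$ in degree $0$).

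For part (ii), I would first apply Proposition~\ref{prop:tensor-trace} to the two $R$-modules $M_1\otimes_{R_1}R$ and $M_2\otimes_{R_2}R$, whose tensor product over $R$ is $M_1\otimes_K M_2$. This sandwiches $\tr_R(M_1\otimes_K M_2)$ between the product and the intersection of $\tr_R(M_1\otimes_{R_1}R)$ and $\tr_R(M_2\otimes_{R_2}R)$. Since $R_2$ is $K$-flat, the maps $R_1\to R$ and $R_2\to R$ are flat, and the hypotheses of Lemma~\ref{lemma:trace-changeofrings}(iii) apply (here the finite presentation of $M_1,M_2$ is used), giving $\tr_R(M_i\otimes_{R_i}R)=\tr_{R_i}(M_i)R$ for $i=1,2$. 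At this point both of the outer bounds in (ii) are in place.

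To finish (ii), it remains to show that the product and intersection $\tr_{R_1}(M_1)R\cdot \tr_{R_2}(M_2)R$ and $\tr_{R_1}(M_1)R\cap \tr_{R_2}(M_2)R$ actually coincide. Setting $I_i=\tr_{R_i}(M_i)$ and viewing the extended ideals as $I_1\otimes_K R_2$ and $R_1\otimes_K I_2$ inside $R=R_1\otimes_K R_2$, this reduces to the identity
\[
(I_1\otimes_K R_2)\cdot(R_1\otimes_K I_2)\;=\;I_1\otimes_K I_2\;=\;(I_1\otimes_K R_2)\cap(R_1\otimes_K I_2).
\]
The first equality is immediate from $(a\otimes 1)(1\otimes b)=a\otimes b$. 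For the second, I would exploit that $K$ is a field to split $R_i=I_i\oplus C_i$ as $K$-vector spaces, whence $R=(I_1\otimes I_2)\oplus (I_1\otimes C_2)\oplus (C_1\otimes I_2)\oplus (C_1\otimes C_2)$, and the intersection is read off directly. Combining this with the sandwich finishes (ii).

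The main obstacle I expect is making the isomorphism $(M_1\otimes_{R_1}R)\otimes_R(M_2\otimes_{R_2}R)\cong M_1\otimes_K M_2$ and the identification of the resolution $F_\bullet\otimes_K R_2\otimes_R(R_1\otimes_K M_2)$ with $F_\bullet\otimes_K M_2$ precise, so that the Tor vanishing in (i) is genuinely reduced to flatness over the field $K$. The step in part (ii) establishing the equality between the product and the intersection, while elementary, is the place where the hypothesis that the base is a field is truly essential, and the vector-space splitting argument is the cleanest way to handle it.
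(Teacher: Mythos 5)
Your proof is correct, and while part (i) is essentially the paper's argument (you resolve $M_1$ over $R_1$, base-change to a free $R$-resolution of $M_1\otimes_{R_1}R$, and then use $K$-flatness of $M_2$ to collapse the Tor; the paper resolves both modules and invokes a K\"unneth argument, but this is the same idea in slightly different packaging), your treatment of part (ii) is genuinely different. The paper derives the identity $I_1R\cdot I_2R = I_1R\cap I_2R$ from part (i) applied to the quotients $R_i/\tr_{R_i}(M_i)$, using the short exact sequence that computes $\Tor_1$ of two cyclic modules by $(\text{intersection})/(\text{product})$. You instead prove this identity directly by choosing $K$-vector-space complements $R_i = I_i\oplus C_i$ and reading off both the product and the intersection from the resulting four-fold direct sum decomposition of $R$. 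This is more elementary, completely decouples (ii) from (i), and sidesteps the need to check that (i) applies to the quotient modules $R_i/\tr_{R_i}(M_i)$ (which are not obviously finitely presented when $R_i$ is not noetherian, though the paper's proof of (i) does go through for arbitrary modules). The only blemish is cosmetic: your opening sentence announces that (i) will feed (ii), but your actual argument for (ii) does not use (i) at all.
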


\begin{proof}
Let   $\mathbb{F}'\to 0$ and  $\mathbb{G}' \to 0$ be free resolutions of $M_1$ over $R_1$, and of $M_2$ over $R_2$, respectively.
General homological algebra facts (see \cite[Theorem 2.7.2]{Weibel})
 imply that for all $i>0$ one has $H_i(\mathbb{F}'\otimes_K \mathbb{G}') \cong \Tor_i^K(M_1, M_2)=0$, since $M_1$ is free over $K$.

Since the canonical  maps $R_1 \to R$ and $R_2\to R$ are flat, it follows that the chain complexes $\mathbb{F}=\mathbb{F'}\otimes_{R_1}R $
and $\mathbb{G}=\mathbb{G'}\otimes_{R_2}R$ are free resolutions over $R$ of $M_1\otimes_{R_1}R$, and of  $M_2\otimes_{R_2}R$, respectively.
As before, we get that $\Tor_i^R(M_1\otimes_{R_1}R, M_2\otimes_{R_2}R) \cong H_i(\mathbb{F}\otimes_R \mathbb{G})$ for all $i$.

For any $R_1$-module $N_1$ and any $R_2$-module $N_2$ there exists a canonical isomorphism
$N_1\otimes_K N_2 \cong (N_1\otimes_{R_1}R)\otimes_R(N_2\otimes_{R_2}R)$, which can be used to construct an isomorphism between the chain complexes
$\mathbb{F}'\otimes_K \mathbb{G}'$ and $\mathbb{F}\otimes_R \mathbb{G}$. This implies that  $\mathbb{F}\otimes_R \mathbb{G}$ is also acyclic, which proves (i).

For (ii) we apply the identity in part (i) to the modules $R_1/\tr_{R_1}(M_1)$ and $R_2/\tr_{R_2}(M_2)$. Combined with \cite[Exercise 3.1.3]{Weibel},
that gives
 $$
0= \Tor_1^R\left(\frac{R}{(\tr_{R_1}(M_1))R}, \frac{R}{(\tr_{R_1}(M_2))R}\right) \cong \frac{(\tr_{R_1}(M_1))R \cap  (\tr_{R_1}(M_2))R}{ (\tr_{R_1}(M_1))R \cdot (\tr_{R_1}(M_2))R}.
$$
These equations together with Proposition \ref{prop:tensor-trace}, Lemma \ref{lemma:trace-changeofrings}(iii)
and the isomorphism of $R$-modules $M_1\otimes_K M_2 \cong (M_1\otimes_{R_1}R)\otimes_R (M_2\otimes_{R_2}R)$
yield the desired statement.
\end{proof}

\begin{Theorem}
\label{thm:canonical-tensor-product}
Let $R_1$ and $R_2$ be positively graded Cohen-Macaulay $K$-algebras over a field $K$, and $\omega_{R_1}$, $\omega_{R_2}$ their respective canonical modules.
Denote $R=R_1\otimes_K R_2$.
Then
$$
 \omega_R \cong \omega_{R_1} \otimes_K \omega_{R_2}  \text{ and }
$$
$$
\tr_R(\omega_R) = \tr_{R_1}(\omega_{R_1})R \cdot \tr_{R_2}(\omega_{R_2})R.
$$
\end{Theorem}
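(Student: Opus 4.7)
The plan is to reduce the trace computation to Proposition~\ref{prop:zero-tor}(ii), once the identification of canonical modules is established. So I proceed in two steps.

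\textbf{Step 1: identify $\omega_R$.} Write $R_i = S_i/I_i$ where $S_i$ is a standard graded polynomial ring over $K$, and set $S = S_1\otimes_K S_2$, which is again a polynomial ring (and hence Gorenstein). Then $R = S/(I_1 S + I_2 S)$. The canonical module of a graded Cohen--Macaulay quotient of $S$ is given by $\omega_{R_i} \iso \Ext_{S_i}^{c_i}(R_i, S_i)(a_i)$ for suitable shifts $a_i$ and codimensions $c_i$, and analogously for $R$ over $S$ with codimension $c = c_1+c_2$ and shift $a_1+a_2$. Using that $S_1 \to S$ and $S_2 \to S$ are flat, together with the Künneth-type identification of Ext over a tensor product of regular rings (or equivalently tensoring the minimal free $S_1$-resolution of $R_1$ with the minimal free $S_2$-resolution of $R_2$ to obtain a minimal $S$-resolution of $R$, using that there is no $\Tor$ obstruction over the field $K$), one deduces
\[
\omega_R \iso \Ext_S^{c_1+c_2}(R_1\tensor_K R_2,\, S_1\tensor_K S_2)\text{(shift)} \iso \omega_{R_1}\tensor_K \omega_{R_2}
\]
as graded $R$-modules. (Equivalently, one can argue via the Künneth formula for graded local cohomology, $H^{d}_{\mm_R}(R) \iso H^{d_1}_{\mm_1}(R_1)\tensor_K H^{d_2}_{\mm_2}(R_2)$ with $d=d_1+d_2$, and pass to graded Matlis duals.) This is the step where care is required: one must check that the relevant flat base-change and vanishing of mixed Tors over $K$ genuinely yield an $R$-module isomorphism and not only a $K$-vector space isomorphism, but since both $S_i\to S$ are flat and $K$ is a field, the argument is formal.

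\textbf{Step 2: compute the trace.} Since isomorphic $R$-modules have the same trace, Step~1 gives
\[
\tr_R(\omega_R) \;=\; \tr_R(\omega_{R_1}\tensor_K \omega_{R_2}).
\]
Now apply Proposition~\ref{prop:zero-tor}(ii) with $M_1 = \omega_{R_1}$ and $M_2 = \omega_{R_2}$ (these are finitely presented over $R_1$ and $R_2$, respectively, as the $R_i$ are Cohen--Macaulay and finitely generated graded $K$-algebras). This yields
\[
\tr_R(\omega_{R_1}\tensor_K \omega_{R_2}) \;=\; \tr_{R_1}(\omega_{R_1})R \,\cdot\, \tr_{R_2}(\omega_{R_2})R,
\]
which is the asserted formula.

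\textbf{Main obstacle.} The only nontrivial ingredient is Step~1, the identification $\omega_R \iso \omega_{R_1}\tensor_K \omega_{R_2}$. Once this is in hand, Step~2 is an immediate invocation of the already established Proposition~\ref{prop:zero-tor}(ii), since that proposition was precisely designed to handle traces of tensor products over $K$ after base change to $R$.
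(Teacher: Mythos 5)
Your proposal follows essentially the same route as the paper's proof: tensor minimal free resolutions of the $R_i$ over the polynomial presentations $S_i$, use the vanishing of mixed Tor over $K$ to see the tensor complex is again a minimal resolution, dualize to identify $\omega_R$ with $\omega_{R_1}\otimes_K\omega_{R_2}$, and then feed this into Proposition~\ref{prop:zero-tor}(ii) for the trace formula. The one point you leave implicit that the paper handles explicitly is that $R=R_1\otimes_K R_2$ is itself Cohen--Macaulay (so that $\omega_R$ exists and the Ext formula in the right codimension applies); the paper gets this from $\projdim_S R = \projdim_{S_1}R_1 + \projdim_{S_2}R_2$ (read off from the tensor resolution) together with the Hoa--Tam dimension formula $\dim R = \dim R_1 + \dim R_2$, and your alternative local-cohomology K\"unneth route would also yield it, but the sketch should say so.
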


\begin{proof}
We consider minimal presentations  $R_1\cong S_1/I_1$, $R_2\cong S_2/I_2$ as quotients of the polynomial rings $S_1$ and $S_2$, and we let $S=S_1\otimes_K S_2$.
Clearly $R\cong S/(I_1, I_2)S$.

We note that the fibers of the flat extensions $S_1\subset S$ and $S_2\subset S$ are Gorenstein, hence using \cite[Theorem 3.3.14]{BH} we obtain that
$\omega_1=\omega_{R_1}\otimes_{S_1}S$ and $\omega_2=\omega_{R_2} \otimes_{S_2} S$ are
the canonical modules for $S_1/I_1\otimes_{S_1} S \cong S/I_1S$, and for $S/I_2S$, respectively.

Let
$$
\mathbb{F'}:  \quad 0\to F'_p \to F'_{p-1} \to \dots \to F'_{0} \to S_1/I_1 \to 0
$$
be a minimal free $S_1$-resolution of $S_1/I_1$.
Since the extension $S_1\subset S$ is flat, it follows that the complex  $\mathbb{F} = \mathbb{F'}  \otimes_{S_1}S$ is a minimal free resolution of $S/I_1S$ over $S$.
Then by \cite[Corollary 3.3.9]{BH}, the dual complex
$\mathbb{F}^* =\Hom_S(\mathbb{F} , S) \cong \Hom_{S_1}(\mathbb{F}' , S_1)\otimes_{S_1}S$ is a minimal free resolution of $\omega_1$.

Similarly, if we start with
$$
\mathbb{G'} :  \quad 0\to G'_q \to G'_{q-1} \to \dots \to G'_{0} \to S_2/I_2 \to 0
$$
a minimal free $S_2$-resolution of $S_2/I_2$, the complex $\mathbb{G} = \mathbb{G'}  \otimes_{S_2}S$ is a minimal free resolution of $S/I_2S$ over $S$, and
	$\mathbb{G}^* $ minimally resolves $\omega_2$ over $S$.

By \cite[Theorem 2.7.2]{Weibel},   $H_i(\mathbb{F} \otimes_S \mathbb{G})= \Tor^S_i(S/I_1S, S/I_2S)$ for all $i$.
We may apply Proposition \ref{prop:zero-tor} to the $S_1$-module $S_1/I_1$ and to the $S_2$-module $S_2/I_2$ and we obtain that
 the complex $\mathbb{F} \otimes_S \mathbb{G}$ is acyclic
and that it minimally resolves $S/I_1S \otimes_S S/I_2S \cong S/(I_1, I_2)S \cong R$ over $S$. Thus $\projdim_S R=\projdim_{S_1} R_1+ \projdim_{S_2}R_2$.

On the other hand, by the formula of Hoa and Tam in \cite[Theorem 1.3]{HoaTam} (see also \cite[Theorem 15.1(ii)]{Matsumura-CRT}) we have that $\dim R= \dim R_1 + \dim R_2$, hence $R$ is also Cohen-Macaulay.
This implies that $(\mathbb{F}  \otimes_S \mathbb{G} )^*$ is a minimal $S$-free resolution of $\omega_{R}$.

From the isomorphism  $(\mathbb{F} \otimes_S \mathbb{G} )^*\cong \mathbb{F}^* \otimes_S \mathbb{G}^*$ we derive that
$\mathbb{F}^* \otimes_S \mathbb{G}^*$ is acyclic, hence  it resolves (minimally) the $S$-module $\omega_1\otimes_S \omega_2$.
Since
$$
\omega_1\otimes_S \omega_2  \cong \omega_{R_1}\otimes_{S_1}S\otimes_S\omega_2\cong \omega_{R_1}\otimes_{S_1}(S_1\otimes_K S_2)\otimes_{S_2}\omega_{R_2}\cong \omega_{R_1}\otimes_K \omega_{R_2},
$$
we get that $\omega_R \cong \omega_{R_1} \otimes_K \omega_{R_2}$. Now using  Proposition \ref{prop:zero-tor} we   obtain that
$\tr_R(\omega_R) = \tr_{R_1}(\omega_{R_1})R \cdot \tr_{R_2}(\omega_{R_2})R$.
\end{proof}

\begin{Corollary}
\label{cor:tensor-NG}
Let $R_1$ and $R_2$ be positively graded Cohen-Macaulay $K$-algebras over a field $K$ with $\embdim R_i>0$ for $i=1,2$, and let $R=R_1\tensor_KR_2$. Then the following conditions are equivalent:
\begin{enumerate}
\item[{\em (i)}] $R$ is nearly Gorenstein;
\item[{\em (ii)}] $R$ is Gorenstein;
\item[{\em (iii)}] $R_1$ and $R_2$ are Gorenstein.
\end{enumerate}
\end{Corollary}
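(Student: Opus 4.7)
The plan is to leverage the product formula $\tr_R(\omega_R) = \tr_{R_1}(\omega_{R_1})R \cdot \tr_{R_2}(\omega_{R_2})R$ established in Theorem~\ref{thm:canonical-tensor-product}. The implication (ii)$\Rightarrow$(i) is immediate from the definition of nearly Gorenstein, and (iii)$\Rightarrow$(ii) is essentially a one-line computation: if both $R_i$ are Gorenstein then $\tr_{R_i}(\omega_{R_i})=R_i$, so the product formula yields $\tr_R(\omega_R)=R$. Thus the only nontrivial direction is (i)$\Rightarrow$(iii).

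For (i)$\Rightarrow$(iii), I would argue by contrapositive: assume, say, that $R_1$ is not Gorenstein, and show $R$ is not nearly Gorenstein. Setting $J_i=\tr_{R_i}(\omega_{R_i})$, the failure of Gorensteinness of $R_1$ means $J_1\subsetneq R_1$, and since $R_1$ is positively graded with unique graded maximal ideal $\mm_1$, this forces $J_1\subseteq\mm_1$. By the product formula,
$$\tr_R(\omega_R)=J_1R\cdot J_2R\subseteq J_1R\subseteq \mm_1R.$$

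The key step is then to show that the graded maximal ideal $\mm$ of $R$ is strictly larger than $\mm_1R$, which will contradict the nearly Gorenstein hypothesis. For this I would use the natural ring surjection
$$\pi\colon R=R_1\otimes_K R_2\twoheadrightarrow (R_1/\mm_1)\otimes_K R_2\cong R_2,$$
whose kernel is exactly $\mm_1R$. Under $\pi$, the graded maximal ideal $\mm=\mm_1R+\mm_2R$ of $R$ maps onto $\mm_2$, while $\tr_R(\omega_R)\subseteq\mm_1R$ maps to zero. The assumption $\embdim R_2>0$ guarantees $\mm_2\neq 0$, so $\mm\not\subseteq\tr_R(\omega_R)$, contradicting (i). By the symmetric argument $R_2$ must be Gorenstein as well.

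The main obstacle I anticipate is purely notational rather than conceptual: one must be careful that $J_1\subsetneq R_1$ genuinely implies $J_1\subseteq\mm_1$ (this uses the graded-local structure and the fact that $\tr_{R_1}(\omega_{R_1})$ is a graded ideal), and that the quotient $R/\mm_1R$ is correctly identified with $R_2$ via the flat base change $R_1\to R_1/\mm_1=K$. Once these identifications are in place, the argument reduces to the simple fact that $\mm_2\neq 0$, which is precisely why the hypothesis $\embdim R_i>0$ is needed in the statement.
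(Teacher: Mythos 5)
Your proposal is correct and follows essentially the same route as the paper: all three implications rest on the product formula of Theorem~\ref{thm:canonical-tensor-product}, and the key implication (i)$\Rightarrow$(iii) is proved by contradiction/contrapositive from the observation that $\tr_R(\omega_R)\subseteq\mm_1 R\subsetneq\mm$. The only cosmetic differences are that you justify the strict inclusion $\mm_1R\subsetneq\mm$ explicitly via the quotient map $R\twoheadrightarrow R_2$ (whereas the paper simply asserts properness using $\embdim R_2>0$), and for (iii)$\Rightarrow$(ii) you pass through $\tr_R(\omega_R)=R$ and Lemma~\ref{non} rather than observing directly that $\omega_R\cong\omega_{R_1}\otimes_K\omega_{R_2}\cong R$ up to shift; both variants are valid and of comparable length.
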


\begin{proof} (i)\implies (iii): Since $\tr(\omega_R)=\tr(\omega_{R_1})R\cdot \tr(\omega_{R_2})R$, our assumption implies that
$\mm\subseteq \tr(\omega_{R_1})R\cdot \tr(\omega_{R_2})R$. Suppose $ \tr(\omega_{R_i}) \subseteq \mm_{R_i}$ for some $i$, say $i=1$.
Then $\tr(\omega_R)\subseteq \mm_{R_1}R\subseteq \mm_R$. This is a  contradiction since the last inclusion is proper if $\embdim R_2>0$.
Thus  $ \tr(\omega_{R_i})=R_i$ for $i=1,2$, and this implies that $R_i$ is Gorenstein for $i=1,2$, see Lemma~\ref{non}.

(iii)\implies (ii): The condition implies that $\omega_{R_i}\iso R_i$ (up to a shift in the grading). Therefore, $\omega_R\iso R_1\tensor_K R_2$  (up to a shift in the grading). Hence $R$ is Gorenstein.

(ii) \implies (i) is obvious.
\end{proof}

\begin{Corollary}
\label{polynomial}
Let $R$ be a positively graded $K$-algebra and $S=R[x_1,\ldots,x_n]$ with $n\geq 1$ be the polynomial ring over $R$. Then $S$ is nearly Gorenstein if and only if  $R$ is Gorenstein.
\end{Corollary}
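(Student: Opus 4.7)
The plan is to recognize the polynomial ring as a tensor product and then invoke the previous corollary, with a brief side remark for the degenerate case.

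First I would write
\[
S = R[x_1,\ldots,x_n] \cong R \otimes_K K[x_1,\ldots,x_n],
\]
so $S$ is the tensor product of two positively graded Cohen--Macaulay $K$-algebras. Since $K[x_1,\ldots,x_n]$ is a polynomial ring, it is Gorenstein, and by the hypothesis $n\geq 1$ it has positive embedding dimension.

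Assume first that $\embdim R>0$, so that Corollary~\ref{cor:tensor-NG} applies with $R_1=R$ and $R_2=K[x_1,\ldots,x_n]$. That corollary gives the chain of equivalences: $S$ is nearly Gorenstein $\iff$ $S$ is Gorenstein $\iff$ both $R$ and $K[x_1,\ldots,x_n]$ are Gorenstein. Since $K[x_1,\ldots,x_n]$ is automatically Gorenstein, the last condition collapses to ``$R$ is Gorenstein'', which is the claimed equivalence.

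It remains to handle the degenerate case $\embdim R=0$. Since $R$ is positively graded over $K$ with $R_0=K$, this forces $R=K$; then $S=K[x_1,\ldots,x_n]$ is a polynomial ring, which is Gorenstein (in particular nearly Gorenstein), and $R=K$ is Gorenstein as well, so both sides of the equivalence hold. No real obstacle is expected here; the content of the statement is entirely absorbed by Theorem~\ref{thm:canonical-tensor-product} and Corollary~\ref{cor:tensor-NG}, and the only point requiring a small amount of care is verifying the embedding dimension hypothesis needed to apply Corollary~\ref{cor:tensor-NG}.
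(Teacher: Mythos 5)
Your proof is correct and is exactly the intended argument: the paper gives no explicit proof for this corollary precisely because it is meant to follow by applying Corollary~\ref{cor:tensor-NG} to the decomposition $S\cong R\otimes_K K[x_1,\ldots,x_n]$. Your additional care in separating out the degenerate case $\embdim R=0$ (where Corollary~\ref{cor:tensor-NG} does not formally apply) is a reasonable and correct refinement, not a deviation.
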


There is a local analogue to this corollary.

\begin{Proposition}
\label{localanalogue}
Let $R$ be a local Cohen-Macaulay ring admitting a canonical module, and let $S=R[|x_1,\ldots,x_n|]$ with $n\geq 1$ be the formal power series over $R$. Then $S$ is nearly Gorenstein if and only if  $R$ is Gorenstein.
\end{Proposition}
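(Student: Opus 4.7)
The plan is to mimic the polynomial-ring argument of Corollary~\ref{polynomial} by exploiting the flat extension $R \to S = R[|x_1,\dots,x_n|]$, whose closed fibre $k[|x_1,\dots,x_n|]$ is regular (hence Gorenstein). The backward direction is immediate: if $R$ is Gorenstein, then by \cite[Theorem 3.3.14]{BH} (applied to the flat morphism $R \to S$ with Gorenstein fibres) one has $\omega_S \cong \omega_R \otimes_R S \cong S$, so $S$ itself is Gorenstein, and in particular nearly Gorenstein.

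For the forward direction, suppose $S$ is nearly Gorenstein. The first step is to compute $\tr_S(\omega_S)$ in terms of $\tr_R(\omega_R)$. Since $\omega_R$ is a finitely generated module over the Noetherian ring $R$, it admits a finite presentation, so Lemma~\ref{lemma:trace-changeofrings}(iii) applied to the flat map $R \to S$ and the isomorphism $\omega_S \cong \omega_R \otimes_R S$ yields
\[
\tr_S(\omega_S) \;=\; \tr_R(\omega_R)\, S.
\]
Set $J = \tr_R(\omega_R) \subseteq R$. The near-Gorenstein hypothesis gives $\mm_S \subseteq JS$, where $\mm_S = \mm_R S + (x_1,\dots,x_n)S$ is the maximal ideal of $S$. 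In particular $x_1 \in JS$.

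The crux is then the observation that, by flatness of $R \to S$, one has the canonical identification
\[
S/JS \;\cong\; (R/J)\otimes_R S \;\cong\; (R/J)[|x_1,\dots,x_n|].
\]
Under this isomorphism the residue class of $x_1$ corresponds to the indeterminate $x_1$ in $(R/J)[|x_1,\dots,x_n|]$, whose coefficient sequence has the unit $1$ in position $x_1$. Thus $x_1 \equiv 0$ in $S/JS$ forces $1 = 0$ in $R/J$, i.e.\ $J = R$. Equivalently $\tr_R(\omega_R) = R$, and Lemma~\ref{non} then yields that $R$ is Gorenstein.

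I expect no serious obstacle: once the canonical module of $S$ is identified via the Gorenstein-fibre criterion and the trace is transferred across the flat base change by Lemma~\ref{lemma:trace-changeofrings}(iii), the argument reduces to the elementary fact that an indeterminate $x_i$ lies in an extended ideal $JS$ of the power series ring precisely when $J$ is the unit ideal. The only point to be mildly careful about is verifying the hypotheses of Lemma~\ref{lemma:trace-changeofrings}(iii)---namely, the flatness of $R \hookrightarrow S$ and the finite presentability of $\omega_R$---both of which are immediate.
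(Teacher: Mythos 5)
Your proof is correct and follows essentially the same route as the paper: both directions use $\omega_S\cong\omega_R\otimes_R S$ together with Lemma~\ref{lemma:trace-changeofrings}(iii) to conclude $\tr_S(\omega_S)=\tr_R(\omega_R)\,S$, and then the forward direction hinges on the observation that an extended ideal $JS$ of the power series ring contains $x_1$ only if $J=R$ (equivalently, $\tr_R(\omega_R)\subseteq\mm_R$ forces $\tr_R(\omega_R)S\subsetneq\mm_S$). The only difference is cosmetic: the paper argues by contraposition while you argue directly, and you spell out the identification $S/JS\cong(R/J)[|x_1,\dots,x_n|]$ which the paper leaves implicit.
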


\begin{proof}
If $R$ is Gorenstein, then $S$ is Gorenstein and hence nearly Gorenstein. For the converse implication, observe that  $\omega_S=\omega_R\tensor_R S$.
Thus  we may apply Lemma~\ref{lemma:trace-changeofrings} and see that $\tr(\omega_S)=\tr(\omega_R)S$.
 If  $R$ is not Gorenstein, then $\tr(\omega_R)\subseteq \mm_R$, and hence $\tr(\omega_S)=\mm_RS\subsetneq \mm_S$. Therefore,  $S$ is not nearly Gorenstein.
\end{proof}

\subsection{Veronese subalgebras}

Let $R=\Dirsum_{i\in \ZZ} R_i$ be a standard  graded $K$-algebra over the field $K$ and $d$ a positive integer.
The ring $R^{(d)}=\oplus_{i\in \ZZ}R_{id}$ is called the  $d$-{th}  {\em Veronese subring} of $R$.
We will consider $R^{(d)}$ as a standard graded $K$-algebra by letting the elements (generators) in $R_d$ have degree $1$.

Let $M_j=\Dirsum_{i\in \NN}R_{di+j}$ for $j=0,\dots, d-1$. Then $R=\Dirsum_{j=0}^{d-1}M_j$, and $M_j$ is a finite $R^{(d)}$-module.
This explains why the $M_j$'s are called the $d$-th {\em Veronese submodules} of $R$.
We refer to \cite[Exercise 3.6.21]{BH} and  \cite{GW} for more background on Veronese algebras and their properties.

We denote $\mm$ and $\mm^{(d)}$ the maximal graded ideal of $R$, and of $R^{(d)}$, respectively.

In the sequel we show that  the trace of any $d$-th Veronese module of $R$ contains $\mm^{(d)}$.

\begin{Theorem}
\label{thm:vero-modules}
Let $R$ be a standard graded $K$-algebra  with $\depth (R) >0$  and $d$ a positive integer.
Then $\mm^{(d)} \subseteq \tr_{R^{(d)}}(M_j)$ for any $d$-th Veronese submodule $M_j$ of $R$.
\end{Theorem}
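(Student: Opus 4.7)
The plan is to exhibit, for each element $x \in R_{d-j}$, the $R^{(d)}$-linear map $\varphi_x\colon M_j \to R^{(d)}$ given by multiplication by $x$, and to show that the sum of the images of these maps is already all of $\mm^{(d)}$. This is a direct construction, and I do not anticipate any serious obstacle; the only point that must be checked carefully is the ``sum of images'' calculation, which hinges on the standard-gradedness hypothesis.

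First, the case $j=0$ is trivial, since $M_0=R^{(d)}$ and so $\tr_{R^{(d)}}(M_0)=R^{(d)}\supseteq \mm^{(d)}$. So assume $1\leq j\leq d-1$, and in particular $d-j\geq 1$. For $x\in R_{d-j}$, the map $\varphi_x(m)=xm$ is clearly additive and $R^{(d)}$-linear. On the homogeneous component $R_{di+j}$ of $M_j$ it takes values in $R_{d-j}\cdot R_{di+j}\subseteq R_{d(i+1)}\subseteq R^{(d)}$, hence $\varphi_x$ defines a well-defined $R^{(d)}$-homomorphism $M_j\to R^{(d)}$ (possibly a graded one of non-zero internal degree, but this is irrelevant for the trace, which counts all $R^{(d)}$-linear maps). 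Summing images as $x$ ranges over $R_{d-j}$ yields
$$
\tr_{R^{(d)}}(M_j) \;\supseteq\; R_{d-j}\cdot M_j \;=\; \bigoplus_{i\geq 0} R_{d-j}\cdot R_{di+j}.
$$

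It remains to identify the right-hand side with $\mm^{(d)}$. Since $R$ is standard graded, $R_aR_b=R_{a+b}$ whenever $a,b\geq 1$. Because $1\leq j\leq d-1$ gives $d-j\geq 1$, and $di+j\geq 1$ for every $i\geq 0$, we conclude $R_{d-j}R_{di+j}=R_{d(i+1)}$ for all $i\geq 0$. Summing over $i$ gives $R_{d-j}\cdot M_j = \bigoplus_{k\geq 1} R_{dk}=\mm^{(d)}$, finishing the proof. I note in passing that the infinite-field assumption does not seem to enter the argument at all; presumably it is included so that Theorem~\ref{thm:vero-modules} can be combined with the Goto--Watanabe criterion for $R^{(d)}$ to be Gorenstein in the corollaries that will follow.
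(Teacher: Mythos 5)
Your proof is correct, and it takes a genuinely different (and arguably cleaner) route than the paper's. The paper first replaces $M_j$ by the isomorphic ideal $I = x_1^{d-j}M_j \subseteq R^{(d)}$, where $x_1 \in R_1$ is a nonzero divisor chosen via graded prime avoidance (this is exactly where the infinite-field hypothesis enters, and where a $\depth R > 0$ assumption is used implicitly), and then invokes Lemma~\ref{lemma:traceideal} to write $\tr(I)=I\cdot I^{-1}$, exhibiting explicit elements of $I^{-1}$ as fractions $x^\gamma/x_1^{d-j}$. Your approach sidesteps the identification with an ideal entirely: you produce honest elements of $\Hom_{R^{(d)}}(M_j, R^{(d)})$ by multiplication, and the computation $\sum_{x\in R_{d-j}} xM_j = \bigoplus_{i\geq 0} R_{d-j}R_{di+j} = \bigoplus_{k\geq 1} R_{dk} = \mm^{(d)}$ uses only that $R_aR_b=R_{a+b}$ for $a,b\geq 1$, which is the definition of standard graded. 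Your closing observation is on the mark: the infinite-field hypothesis is genuinely unused in your argument, so you have in fact proved the slightly stronger statement valid over any field; the paper itself only recovers this for polynomial rings (Corollary following Corollary~\ref{cor:vero-ng}, which points out that the infinite-field assumption was only needed to secure a degree-one nonzero divisor). What the paper's approach buys in exchange is a route via $I^{-1}$ that parallels the squarefree Veronese computation in the next subsection, where the anti-canonical ideal is studied directly.
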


\begin{proof}
If $d=1$, there is only one Veronese submodule $M_0=R^{(d)}$ and its trace equals $R^{(d)}=R$.

Assume $d>1$.  Without loss of generality we may assume that the field $K$ is infinite, otherwise we extend the base field and we use Lemma \ref{lemma:trace-changeofrings}(iii).
Then we may choose a $K$-basis $x_1, \dots, x_n$ for  $R_1$ sucht that $x_1$ is a nonzero divisor.  
This basis generates the algebra $R$, as well.
Fix $0\leq j <d$. As an $R^{(d)}$-module, $M_j$ is generated by  the set $\mathcal{G}=\{ x_1^{a_1}\cdots  x_n^{a_n} : \sum_{i=1}^{n} a_i=j \}$.
Since $x_1$ is regular on $R$, the $R^{(d)}$-module  $I=x_1^{d-j} M_j$ is isomorphic to $M_j$, hence they have the same trace ideal.
Note that $I$ is an ideal of $R^{(d)}$ which contains the regular element $x_1^d$, and therefore the formula in Lemma \ref{lemma:traceideal} applies.

For a vector of nonnegative integers $\alpha=(a_1, \dots, a_n)$ we denote
 $|\alpha|=\sum_{i=1}^n a_i$ and  $x^\alpha=x_1^{a_1}\cdots     x_n^{a_n}$.
Given a  product $x^{\alpha}$ with $|\alpha|=d$, there exist vectors $\beta, \gamma$ with nonnegative integer entries such that $\alpha=\beta+\gamma$ and $|\beta|=j$.
We may write 
$$
x^\alpha= (x_1^{d-j}\cdot x^{\beta})\cdot \frac{x^{\gamma}\cdot x_1^j}{x_1^d},
$$
where $x^{\gamma}\cdot x_1^j \in R_d$.
Clearly, $x_1^{d-j}\cdot x^{\beta}$ is in $I$. As $x_1^d$ is regular on $R$ (and on $R^{(d)}$)  we get that
$\frac{x^\gamma \cdot x_1^j}{x_1^d} \in Q(R^{(d)})$.
Since $\frac{x^{\gamma}\cdot x_1^j}{x_1^d}(x_1^{d-j} \mathcal{G}) \subseteq R^{(d)}$, we conclude that $x^{\alpha} \in I\cdot I^{-1}$ and  $ \mm^{(d)} \subseteq \tr_{R^{(d)}}(M_j) $,
which finishes the proof.
\end{proof}

\begin{Corollary}
\label{cor:vero-ng}
If $R$ is a Gorenstein standard graded $K$-algebra   of positive dimension, then all its Veronese subalgebras are nearly Gorenstein.
\end{Corollary}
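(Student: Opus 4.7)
The plan is to reduce the claim directly to Theorem~\ref{thm:vero-modules} by identifying the canonical module $\omega_{R^{(d)}}$ with one of the Veronese submodules $M_j$.

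First I would invoke the result of Goto and Watanabe cited in the paper, which describes the canonical module of a Veronese subring: for a Cohen--Macaulay standard graded $K$-algebra $R$, the canonical module $\omega_{R^{(d)}}$ is (up to grading shift) the $d$-th Veronese submodule of the canonical module $\omega_R$, viewed as a graded $R^{(d)}$-module. Since $R$ is Gorenstein, $\omega_R \cong R(a)$ for some integer $a$ (a shift by the $a$-invariant). Consequently $\omega_{R^{(d)}}$ is, up to a grading shift, the $d$-th Veronese submodule of $R(a)$ regarded as an $R^{(d)}$-module, which coincides with $\bigoplus_i R_{id+a}$.

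Next I would observe that this decomposition, when reindexed by writing $a = qd + j$ with $0\le j<d$, is isomorphic as a graded $R^{(d)}$-module (up to a shift) to $M_j = \bigoplus_i R_{id+j}$. Trace ideals are invariant under isomorphism and under grading shifts, so $\tr_{R^{(d)}}(\omega_{R^{(d)}}) = \tr_{R^{(d)}}(M_j)$.

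Finally, applying Theorem~\ref{thm:vero-modules} to the Veronese submodule $M_j$ yields $\mm^{(d)} \subseteq \tr_{R^{(d)}}(M_j) = \tr_{R^{(d)}}(\omega_{R^{(d)}})$, which is exactly the definition of $R^{(d)}$ being nearly Gorenstein. The only genuine step requiring care is correctly invoking the Goto--Watanabe identification of $\omega_{R^{(d)}}$ with a Veronese submodule of $\omega_R$; once that is in hand, the Gorenstein hypothesis collapses $\omega_R$ to a shift of $R$, and everything follows from the theorem already proved.
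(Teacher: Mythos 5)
Your proposal is correct and follows the same route as the paper: identify $\omega_{R^{(d)}}$ (up to shift) with a Veronese submodule $M_j$ via Goto--Watanabe, then apply Theorem~\ref{thm:vero-modules}. The paper cites the Gorenstein-case identification directly, whereas you derive it from the general description of $\omega_{R^{(d)}}$ as the $d$-th Veronese of $\omega_R$ and then use $\omega_R \cong R(a)$; this is just a more spelled-out version of the same argument.
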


\begin{proof}
Goto and Watanabe in \cite{GW} showed that if $R$ is Gorenstein, then up to a shift $\omega_{R^{(d)}} \cong M_j$ for some $j \in 1,\dots, d-1$.
This fact  together with Theorem \ref{thm:vero-modules} proves the  statement.
\end{proof}

\begin{Corollary}
The Veronese subalgebras of a polynomial ring over any field  are nearly Gorenstein.
\end{Corollary}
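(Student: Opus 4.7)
The plan is to observe that the only place where the infinite field hypothesis was used in the proofs of Theorem~\ref{thm:vero-modules} and Corollary~\ref{cor:vero-ng} is to guarantee the existence of a $K$-basis of $R_1$ whose elements are nonzerodivisors on $R$. For the polynomial ring $S = K[x_1, \ldots, x_n]$, this is automatic: the variables $x_1, \ldots, x_n$ form such a basis, since $S$ is a domain.

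Concretely, I would proceed as follows. Fix a positive integer $d$ and a residue $j$ with $0 \le j < d$. Set $I = x_1^{d-j} M_j$, which is an ideal of $S^{(d)}$ containing the regular element $x_1^d$, hence Lemma~\ref{lemma:traceideal} applies. Imitating the proof of Theorem~\ref{thm:vero-modules} verbatim, for every monomial $x^\alpha$ of degree $d$ in $S$ I would write $\alpha = \beta + \gamma$ with $|\beta| = j$, so that $x^\alpha = (x_1^{d-j} x^\beta) \cdot (x^\gamma / x_1^{d-j})$; the first factor lies in $I$ and is regular, while the second factor lies in $I^{-1}$ since multiplying it by $x_1^{d-j} \cdot x^\delta$ for any $|\delta| = j$ yields an element of $S^{(d)}$. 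This shows $\mm^{(d)} \subseteq \tr_{S^{(d)}}(M_j)$ for every $j$.

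To finish, I would apply the theorem of Goto--Watanabe \cite{GW} (exactly as in Corollary~\ref{cor:vero-ng}): since $S$ is Gorenstein, the canonical module $\omega_{S^{(d)}}$ is, up to a degree shift, isomorphic to $M_j$ for some $j \in \{1, \ldots, d-1\}$. Combining this with the containment just established, $\mm^{(d)} \subseteq \tr_{S^{(d)}}(\omega_{S^{(d)}})$, which is precisely the definition of $S^{(d)}$ being nearly Gorenstein.

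There is really no main obstacle beyond the observation about the basis: the technical content is entirely the same as in Theorem~\ref{thm:vero-modules} and Corollary~\ref{cor:vero-ng}, and the only issue to flag was that those results were stated over an infinite base field. Since the variables of a polynomial ring always provide the required basis of regular degree-one elements, the corollary follows with no extra input.
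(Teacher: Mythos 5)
Your argument is correct and matches the paper's reasoning exactly: the paper derives this corollary from the observation that the infinite-field hypothesis in Theorem~\ref{thm:vero-modules} (and hence in Corollary~\ref{cor:vero-ng}) is only needed to produce degree-one algebra generators forming a $K$-basis of $R_1$ by nonzerodivisors, and for a polynomial ring the variables $x_1,\dots,x_n$ supply this over any field. Your re-running of the proof of Theorem~\ref{thm:vero-modules} with $x_1,\dots,x_n$ and then invoking Goto--Watanabe as in Corollary~\ref{cor:vero-ng} is the same argument, just spelled out in full.
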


\subsection{Squarefree Veronese subalgebras}

For $1\leq d \leq n$, the $d$-th squarefree Veronese subalgebra of $S=K[x_1, \dots, x_n]$ is the subalgebra
$R_{n,d}$ generated by the set $V_{n,d}$ of squarefree monomials in $S$ of degree $d$. As with the regular Veronese,
$R_{n,d}$ is standard graded by letting $\deg m=1$ for all $m\in V_{n,d}$. Moreover, $R_{n,d}$ is normal (cf. \cite{Villa}, and \cite{Sturmfels}),
hence Cohen-Macaulay.

In contrast to the regular Veronese subalgebras, we show that $R_{n,d}$ is not nearly Gorenstein unless it is already Gorenstein.
By work of De Negri and Hibi in \cite{DeNegri-Hibi} (see also \cite[Corollary 2.14]{BVV}) the latter
property   holds if and only if
$$
(n,d) \in \{(n,1), (n,n-1), (2d, d) \}.
$$

The monomials in $R_{n,d}$ correspond to lattice points in some rational cone $C\subset \RR^n$, and by Danilov's Theorem
the canonical module $\omega_{R_{n,d}}$ is the $K$-span of the monomials in the relative interior of $C$.
We refer to \cite[Chapter 6]{BH} for the necessary background. The next results of Bruns, Vasconcelos and Villarreal in \cite{BVV}
describe the lattice points of the cone $C$ and a generating system for  $\omega_{R_{n,d}}$.

\begin{Lemma} (\cite[Proposition 2.4]{BVV})
\label{lemma:sqfreevero}
A monomial $m=x_1^{a_1}\cdots x_n^{a_n}$ is in $R_{n,d}$ if and only if
\begin{eqnarray*}
\sum_{i=1}^n a_i \equiv 0 \mod d, \text{and} \\
d a_i \leq  \sum_{j=1}^{n}a_j, \text{ for all } i=1, \dots, n.
\end{eqnarray*}
\end{Lemma}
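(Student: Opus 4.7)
The statement is an ``if and only if'' characterization of the monomials lying in the squarefree Veronese subalgebra $R_{n,d}$, so the plan is to treat the two implications separately: the forward direction is a direct bookkeeping argument, and the converse needs a combinatorial induction on the number of squarefree degree-$d$ factors.

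For the forward direction, suppose $m = m_1 \cdots m_k$ with each $m_\ell$ a squarefree monomial of degree $d$. Then $\sum_{i=1}^n a_i = k d$, so the divisibility condition holds, and since each factor $m_\ell$ contributes at most $1$ to the exponent of any $x_i$, one has $a_i \leq k$ for every $i$, which rearranges to $d a_i \leq \sum_{j=1}^n a_j$. This step is essentially immediate.

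For the converse, assume both conditions hold, and let $k = (\sum_j a_j)/d$. I would induct on $k$, with the $k=0$ case giving $m=1 \in R_{n,d}$. For the inductive step, the goal is to extract a squarefree factor $\prod_{i\in S} x_i$ of degree $d$ such that subtracting the indicator vector of $S$ from $(a_1,\dots,a_n)$ leaves a vector $(a_i')$ that still satisfies the two conditions with $k$ replaced by $k-1$. The binding constraint is $a_i' \leq k-1$, which forces $S$ to contain every index $i$ with $a_i=k$; on the other hand we need $S \subseteq U:=\{i : a_i \geq 1\}$ and $|S|=d$. So the heart of the matter is the numerical inequality
\begin{equation*}
|T| \;\leq\; d \;\leq\; |U|, \qquad T := \{i : a_i = k\},
\end{equation*}
both halves of which fall out of the hypotheses: from $k|T| \leq \sum_j a_j = kd$ we obtain $|T| \leq d$, and from $kd = \sum_j a_j \leq k|U|$ (using $a_j \leq k$) we obtain $|U|\geq d$. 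Hence we may pick $S$ with $T \subseteq S \subseteq U$ and $|S|=d$. Then $(a_i')$ satisfies $\sum a_i' = (k-1)d$, $a_i' \leq k-1$, and the divisibility; by induction $\prod_i x_i^{a_i'} \in R_{n,d}$, and multiplying by $\prod_{i\in S}x_i$ gives $m \in R_{n,d}$.

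I do not expect a serious obstacle here; the argument is a clean greedy/induction proof. The single delicate point is observing that the two inequalities $|T|\leq d \leq |U|$ must hold simultaneously so that the extraction $T \subseteq S \subseteq U$ is feasible, and recognizing that this simultaneous bound is exactly what the divisibility condition together with the bound $a_i \leq k$ (which is the rewritten form of $da_i \leq \sum_j a_j$) guarantees.
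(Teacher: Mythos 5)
The paper does not supply its own proof of this lemma; it cites \cite[Proposition 2.4]{BVV} without reproducing the argument. Your proof is correct and self-contained. The forward direction is the straightforward bookkeeping you describe. In the converse, the key point is exactly the feasibility inequality $|T| \le d \le |U|$, and both halves do follow as you say: $k|T| = \sum_{i\in T} a_i \le \sum_j a_j = kd$ gives $|T|\le d$, and $kd = \sum_{i\in U} a_i \le k|U|$ (using $a_i\le k$, which is the rewritten form of $da_i\le\sum_j a_j$) gives $|U|\ge d$; together with $T\subseteq U$ (automatic since $k\ge 1$ in the inductive step) this lets you choose $S$ with $T\subseteq S\subseteq U$ and $|S|=d$. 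Subtracting the indicator of $S$ preserves both conditions with $k$ replaced by $k-1$, so the induction closes. What you have written is essentially the standard proof that the hypersimplex has the integer decomposition property (equivalently, that $R_{n,d}$ is normal with the indicated exponent description), which is the argument one expects to underlie the cited result; there is nothing to flag.
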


\begin{Theorem}
\label{thm:bvv-omega} (\cite[Theorem 2.6]{BVV})
If $n\geq 2d\geq 4$, the ideal $\omega_{R_{n,d}}$ is generated by the  monomials $m=x_1^{a_1}\cdots x_n^{a_n}$
which satisfy the conditions:
\begin{enumerate}
\item[{\em (i)}]  $a_i \geq 1$ and $d a_i \leq -1 + \sum_{j=1}^n a_j$, for all $i=1, \dots, n$;
\item[{\em (ii)}] $\sum_{i=1}^n a_i \equiv 0 \mod d$;
\item[{\em (iii)}] $|\{ i: a_i \geq 2 \} | \leq d-1$.
\end{enumerate}
\end{Theorem}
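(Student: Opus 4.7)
\emph{Proof proposal.} The plan is to invoke Danilov's theorem for normal affine toric rings (see~\cite[Chapter~6]{BH}) to identify $\omega_{R_{n,d}}$ with the $K$-span of the monomials whose exponent vectors lie in the relative interior of the rational cone $C \subseteq \RR^n$ generated by the exponents of $V_{n,d}$. The facet-defining inequalities of $C$ are $a_i \geq 0$ and $d a_i \leq \sum_j a_j$, as can be read off from Lemma~\ref{lemma:sqfreevero}; passing to the integral relative interior amounts to making these inequalities strict (i.e.\ $a_i \geq 1$ and $d a_i \leq \sum_j a_j - 1$) while preserving the lattice condition $\sum_j a_j \equiv 0 \bmod d$. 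Hence the monomials in $\omega_{R_{n,d}}$ are precisely those satisfying (i) and (ii); what remains is to show that those additionally satisfying (iii) already generate $\omega_{R_{n,d}}$ as an $R_{n,d}$-module.

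I would proceed by induction on the total degree $|a| := \sum_j a_j$. If $m$ satisfies (i), (ii), (iii) we are done; otherwise $s := |\{j : a_j \geq 2\}| \geq d$. Set $k := |a|/d$. Combining condition (i) with integrality forces $a_j \leq k-1$ for every $j$. The crucial reduction is to choose $d$ indices $i_1,\dots,i_d \in \{j : a_j \geq 2\}$ containing every index where $a_j = k-1$, and then set $v := x_{i_1}\cdots x_{i_d} \in V_{n,d}\subseteq R_{n,d}$ and $m' := m/v$. Writing $b_j$ for the exponents of $m'$, a direct check using $a_{i_\ell} \leq k-1$ for the chosen indices and $a_j \leq k-2$ for the rest shows that $b_j \geq 1$, $\sum_j b_j = (k-1)d$ and $d b_j \leq (k-1)d - 1$, so $m' \in \omega_{R_{n,d}}$. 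Since $|m'| = (k-1)d < |m|$, induction completes the argument.

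The main obstacle is justifying the existence of the index set in the crucial reduction. First one observes $k \geq 3$: otherwise $k = 2$ forces every $a_j = 1$, contradicting $s \geq d \geq 2$. In particular $\{j : a_j = k-1\} \subseteq \{j : a_j \geq 2\}$. Next, the estimate
\[
kd \;=\; \sum_j a_j \;\geq\; (k-1)\cdot |\{j : a_j \geq k-1\}| \;+\; 1 \cdot \bigl(n - |\{j : a_j \geq k-1\}|\bigr)
\]
rearranges to $|\{j : a_j \geq k-1\}| \leq (kd - n)/(k-2)$, and the hypothesis $n \geq 2d$ gives exactly $|\{j : a_j \geq k-1\}| \leq d$. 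Since $\{j : a_j \geq 2\}$ has cardinality at least $d$ and contains $\{j : a_j = k-1\}$, one can enlarge the latter within the former to a set of size exactly $d$, producing the desired indices and closing the proof.
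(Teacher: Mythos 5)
The paper itself does not prove this theorem: it is quoted from \cite[Theorem 2.6]{BVV} and used as a black box, so there is no in-paper proof to compare against. Your argument is a correct, self-contained proof and the key reduction is sound; I verified it as follows. Once $s := |\{j : a_j \geq 2\}| \geq d$, you correctly deduce from (i) that $a_j \leq k-1$ for all $j$ (where $k=|a|/d$), and that $k\geq 3$, so that $\{j : a_j = k-1\} \subseteq \{j : a_j \geq 2\}$. The count $kd = \sum_j a_j \geq (k-1)T + (n-T)$ with $T := |\{j : a_j = k-1\}|$ rearranges to $(k-2)T \leq kd - n$, and $n\geq 2d$ then gives $T \leq d$, so a $d$-element subset of $\{j : a_j \geq 2\}$ containing $\{j : a_j = k-1\}$ exists; dividing by the corresponding squarefree degree-$d$ monomial lowers every exponent that was equal to $k-1$ and keeps all exponents between $1$ and $k-2$, so (i) and (ii) persist and the degree strictly drops, closing the induction.

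One small presentational caveat: the identification of the facet inequalities of $C$ with $a_i \geq 0$ and $d a_i \leq \sum_j a_j$, and of the relevant lattice with $\{a \in \ZZ^n : \sum_j a_j \equiv 0 \pmod{d}\}$, cannot literally be ``read off'' from Lemma~\ref{lemma:sqfreevero}, which describes the affine semigroup rather than the cone. The passage from the semigroup description to the cone description (by clearing denominators and scaling by $d$) is routine and could be left implicit in a sketch, but in a full write-up it deserves a sentence, since Danilov's theorem requires the interior \emph{of the cone}, taken relative to the correct lattice, and a redundant inequality in a cone description would not have to be made strict on the interior. With that caveat noted, your proof is complete and correct.
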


The restrictions on $n$ and $d$ in Theorem \ref{thm:bvv-omega} do not leave out any interesting cases.
If $d=1$, then $R_{n,d}=S$. As noted in \cite[Remark 2.13]{BVV}, the bijective map $\rho: V_{n,d} \to V_{n, n-d}$ given by
 $\rho(m)=x_1\cdots x_n/m$ for all $m$ in $V_{n,d}$ induces a graded isomorphism between the $K$-algebras $R_{n,d}$ and $R_{n,n-d}$,
so we may restrict our study to the case $n \geq 2d \geq 4$.

Our aim is to describe a generating set for the anti-canonical ideal of a squarefree Veronese algebra $R_{n,d}$.
The following lemma is a partial result in this direction.

\begin{Lemma}
\label{lemma:dvars}
 If $n>2d\geq 4$, then
$$
(1/x_{i_1}\cdots x_{i_d}\:\;  1\leq i_1 <\dots <i_d \leq n)R_{n,d} \subseteq \omega_{R_{n,d}}^{-1}.
$$
Equality holds if and only if $n=2d+1$.
\end{Lemma}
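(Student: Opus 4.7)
The plan is to pass to the monomial picture. Since $R_{n,d}$ is $\ZZ^n$-graded, so are $\omega_{R_{n,d}}$ and $\omega_{R_{n,d}}^{-1}$, and both statements reduce to claims about monomials. Theorem \ref{thm:bvv-omega} already describes the monomial generators of $\omega_{R_{n,d}}$; I would complement this with an analogous description of $\omega_{R_{n,d}}^{-1}$ coming from the facets of the rational cone $C$ defining $R_{n,d}$. Concretely, Danilov's theorem and a computation of the minima of the facet functionals $a_i$ and $\sum_j a_j-d a_i$ on $\relint(C)\cap L_{n,d}$, where $L_{n,d}=\{b\in\ZZ^n:\sum_i b_i\equiv 0\pmod d\}$, give that $x^b\in\omega_{R_{n,d}}^{-1}$ precisely when $b\in L_{n,d}$, $b_i\geq -1$ for every $i$, and $d b_i\leq\sum_j b_j+d$ for every $i$. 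The second threshold is $d$ rather than $1$ because $\sum_j a_j-d a_i$ is divisible by $d$ on $L_{n,d}$.

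For the inclusion, I would take a generator $x^a$ of $\omega_{R_{n,d}}$ given by Theorem \ref{thm:bvv-omega} and a squarefree degree-$d$ monomial $x_{i_1}\cdots x_{i_d}$, and verify that $x^{a'}:=x^a/(x_{i_1}\cdots x_{i_d})$ satisfies the criterion of Lemma \ref{lemma:sqfreevero}. Conditions (i) and (ii) of Theorem \ref{thm:bvv-omega} immediately give $a'\geq 0$ and $\sum_j a_j'\equiv 0\pmod d$. The only delicate point is $d a_i'\leq\sum_j a_j'$ for $i\notin I:=\{i_1,\dots,i_d\}$: condition (i) yields $d a_i\leq\sum_j a_j-1$, and since both $d a_i$ and $\sum_j a_j$ are multiples of $d$, this sharpens at no cost to $d a_i\leq\sum_j a_j-d$, which is exactly the required inequality.

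For the equality, assume $n=2d+1$ and let $x^b\in\omega_{R_{n,d}}^{-1}$. Set $I=\{i:b_i=-1\}$ and $B=\{j\notin I:d b_j>\sum_k b_k\}$. The idea is to find a $d$-subset $J\subseteq\{1,\dots,n\}$ with $I\subseteq J$ and $J\cap B=\emptyset$: then $c:=b+\chi_J$ is componentwise nonnegative (since $I\subseteq J$), satisfies $\sum_j c_j\equiv 0\pmod d$, and fulfills $d c_j\leq\sum_k c_k$ for every $j$ (from the defining $\omega_{R_{n,d}}^{-1}$ inequality for $j\notin J$ and from $J\cap B=\emptyset$ on $J\setminus I$), so Lemma \ref{lemma:sqfreevero} places $c$ in $H_{n,d}$ and $x^b=x^c/(x_{j_1}\cdots x_{j_d})\in (1/(x_{j_1}\cdots x_{j_d}))R_{n,d}$. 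Such $J$ exists provided $|I|\leq d$ and $|B|\leq n-d$. For the first, summing $d b_j\leq\sum_k b_k+d$ over the $n-|I|$ indices $j\notin I$ and using $\sum_{j\notin I}b_j=\sum_k b_k+|I|$ yields $d(\sum_k b_k+|I|)\leq (n-|I|)(\sum_k b_k+d)$; substituting $n=2d+1$ and $|I|=d+1$ collapses this to $|I|\leq d$, a contradiction, and a parallel observation handles $|I|>d+1$. For the second, divisibility forces $d b_j-\sum_k b_k$ to lie in $\{1,\dots,d\}\cap d\ZZ=\{d\}$ for every $j\in B$, so all $b_j$'s with $j\in B$ coincide, and a short count on the nonnegative exponents outside $I\cup B$ gives $|B|\leq n-d$.

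For strict inclusion when $n\geq 2d+2$, I would exhibit the obstruction $b$ defined by $b_i=-1$ on $\{1,\dots,d+1\}$, $b_i=1$ on $\{d+2,\dots,2d+2\}$, and $b_i=0$ elsewhere: it satisfies the three defining conditions of $\omega_{R_{n,d}}^{-1}$ by direct check, yet $|\{i:b_i=-1\}|=d+1>d$ prevents $x^b$ from lying in any single $(1/(x_{j_1}\cdots x_{j_d}))R_{n,d}$. The main technical obstacle throughout is the $n=2d+1$ step: simultaneously controlling $|I|$ and $|B|$, and then checking that the chosen $J$ produces a vector $c$ satisfying every inequality of Lemma \ref{lemma:sqfreevero}.
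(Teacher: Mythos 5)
Your proof is correct, and for the equality part it follows a genuinely different route from the paper. The paper proves only the inclusion inside the body of the lemma and defers ``equality iff $n=2d+1$'' until after Theorem~\ref{thm:sqfreevero-anticanonical}: there, $\omega_{R_{n,d}}^{-1}$ is minimally generated, up to the $\binom{n}{d}$ obvious fractions, by monomials parametrized by the auxiliary set $P_{n,d}$, and equality reduces to showing $P_{n,d}=\emptyset$ iff $n=2d+1$. You instead work directly with the Laurent-monomial membership criterion for $\omega_{R_{n,d}}^{-1}$ (which is exactly what Steps~3--4 of that proof establish, recast as $b\in L_{n,d}$, $b_i\ge-1$, $db_i\le\sum_jb_j+d$), and then give a self-contained combinatorial argument: for $n=2d+1$ build a $d$-subset $J\supseteq I$ with $J\cap B=\emptyset$ so that $b+\chi_J$ passes Lemma~\ref{lemma:sqfreevero}; for $n\ge2d+2$ exhibit the explicit obstruction with $d+1$ entries $-1$ and $d+1$ entries $1$. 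This buys independence from the full classification in Theorem~\ref{thm:sqfreevero-anticanonical} and gives a cleaner conceptual reason for the threshold $n=2d+1$ (one cannot have more than $d$ indices at level $-1$). The cost is several routine but nontrivial verifications that your sketch compresses: the bound $\sigma:=\sum_kb_k\ge -d$ (which follows from $db_i\le\sigma+d$, $b_i\ge-1$, divisibility, and $n>2d$), which is silently used both in the ``parallel observation'' for $|I|>d+1$ and in $\sum_kc_k\ge 0$; the actual ``short count'' giving $|B|\le n-d$, which itself needs $|I|\le d$ as input; and the claim that the monomial part of $Q(R_{n,d})$ is exactly $L_{n,d}$, so that $b\in L_{n,d}$ really encodes membership in $Q(R_{n,d})$. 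Also your inclusion argument quietly restricts the ``delicate point'' to $i\notin I$; the indices $i\in I$ need the one-line remark that $da_i'\le da_i-d\le\sum a_j'$. All of these can be filled in, so I regard the argument as sound; the inclusion part coincides with the paper's, the rest is a valid alternative.
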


\begin{proof}
Let $f=1/x_{i_1}\cdots x_{i_d}$ with $ 1\leq i_1 <\dots <i_d \leq n$, and $m=x_1^{a_1}\cdots x_n^{a_n}$ a generator for $\omega_{R_{n,d}}$
as given by Theorem \ref{thm:bvv-omega}. Then $d a_i \leq -1+\sum_{j=1}^n a_j$ for all $i$, and since $d$ divides $\sum_{j=1}^n a_j$ we get
\begin{equation}
\label{eq:more-d}
d a_i \leq -d+\sum_{j=1}^n a_j, \text{ for } i=1,\dots, n.
\end{equation}
Since $a_i>0$ for all $i$, it follows that
$$
mf=\left(\prod_{j=1}^d x_{i_j}^{a_{i_j}-1}\right) \left(\prod_{i\notin\{i_1, \dots, i_d\}} x_i^{a_i}\right)
$$
is a monomial in $S$ of degree divisible by $d$.
The inequalities \eqref{eq:more-d} assure that the exponents of $mf$ satify the conditions in Lemma \ref{lemma:sqfreevero}, therefore $mf \in R_{n,d}$.
This confirms that $1/x_{i_1}\cdots x_{i_d} \in \omega_{R_{n,d}}^{-1}$.

We defer the discussion of the equality case after the proof of Theorem \ref{thm:sqfreevero-anticanonical}.
\end{proof}

For any monomial $u$ is $S$ we denote its vector of exponents by $\log u$.
Even if $u$ is in $R_{n,d}$, its degree will be  considered with respect to the standard grading on $S$ assigning to each variable the degree $1$.

\begin{Theorem}
\label{thm:sqfreevero-anticanonical}
Let $n>2d \geq  4$ . Then $\omega_{R_{n,d}}^{-1}$ is minimally generated by the fractions $u/x_1\cdots x_n$,  where  $u$ is a monomial in $S$ such that either
it is a product of $n-d$ distinct variables, or else,
 up to a permutation of the entries, $\log u$ belongs to  the set $P_{n,d}$, where $P_{n,d}$ is the set of vectors of the form
$$(\underbrace{c,\dots, c}_{d+1},b_{d+2}, \dots, b_{n-d-c+1}, \underbrace{0,\dots,0}_{d+c-1}) \in \NN^{n}$$  with
$ 2 \leq c \leq n-2d$, $c\geq b_{d+2} \geq \dots \geq b_{n-d-c+1} \geq 0$ and
$\sum_{i=d+2}^{n-d-c+1}b_i=n-2d-c$.
\end{Theorem}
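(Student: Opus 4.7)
The plan is to characterize $\omega_{R_{n,d}}^{-1}$ explicitly as a set of Laurent monomials whose exponents satisfy combinatorial inequalities, and then to identify its minimal generators as an $R_{n,d}$-module. First I would write a candidate $f \in \omega_{R_{n,d}}^{-1}$ in the form $f = u/(x_1\cdots x_n)$ with $u = \prod_i x_i^{u_i}$. The requirement that $fm \in R_{n,d}$ for each generator $m = \prod_i x_i^{a_i}$ of $\omega_{R_{n,d}}$ (as listed in Theorem~\ref{thm:bvv-omega}) translates, via Lemma~\ref{lemma:sqfreevero}, into: $u_i \geq 0$ for all $i$ (forced by generators with $a_i = 1$, so $u$ is a genuine monomial in $S$), $\deg u \equiv n \pmod d$, and $d(u_i + a_i) \leq \deg u + \sum_j a_j + d - n$ for all generators $m$ and all $i$.

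Next I would isolate the binding instance of the last condition. Items (i) and (ii) of Theorem~\ref{thm:bvv-omega} combine to give $d a_i \leq \sum_j a_j - d$, which is tight on ``extremal'' generators $m$ satisfying $\sum_j a_j = d(a_i + 1)$. For $n > 2d$ one can exhibit such extremal $m$ with any chosen position $i$ holding the maximal value $a_i = k - 1$ (for a suitable $k \geq 3$), by filling the other coordinates with a mix of $1$s and $2$s (or higher) consistent with condition (iii); by the symmetry of the setup under coordinate permutations, extremal $m$ exist for every $i$. The conditions on $u$ then collapse to $u_i \geq 0$, $\deg u \equiv n \pmod d$, and
$$d u_i \leq \deg u + 2d - n \quad \text{for all } i.$$
Introducing the parameter $c := (\deg u + 2d - n)/d \in \ZZ_{\geq 0}$, this becomes $u_i \in \{0, 1, \ldots, c\}$ with $\sum u_i = cd + n - 2d$; the sum constraint forces $c \geq 1$. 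When $c = 1$, each $u_i \in \{0, 1\}$ and $\sum u_i = n - d$, so $u$ is a product of $n - d$ distinct variables, giving the Type~1 generators.

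The heart of the argument is the minimality analysis for $c \geq 2$. Since the affine semigroup $H_{n,d}$ of $R_{n,d}$ is generated by the characteristic vectors $e_S$ of $d$-subsets $S \subseteq [n]$, any nonzero $a \in H_{n,d}$ with $a \leq u$ decomposes as $a = \sum_{j=1}^k e_{S_j}$. A direct inequality computation then shows that $u - a$ lies in the solution set above precisely when each position $i$ with $u_i = c$ belongs to every $S_j$; since $|S_j| = d$, this forces $|\{i : u_i = c\}| \leq d$. Hence $u$ is a minimal generator of $\omega_{R_{n,d}}^{-1}$ if and only if $|\{i : u_i = c\}| \geq d + 1$. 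This bound together with the sum constraint yields $c \leq n - 2d$, and a counting estimate (using $\sum u_i = cd + n - 2d$ and bounding the remaining non-zero entries by $c - 1$) shows that the number of zero entries of $u$ is at least $d + c - 1$. Sorting coordinates in decreasing order --- putting $d + 1$ of the $c$-entries first and $d + c - 1$ zeros last --- places $\log u$ in the form of $P_{n,d}$; conversely, every vector in $P_{n,d}$ satisfies both the solution-set inequalities and the minimality criterion. The Type~1 case is automatically minimal by the same criterion, since $n - d \geq d + 1$ by hypothesis.

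The main obstacle is the minimality step: ruling out reductions of $u$ by multi-term elements $a = \sum_{j=1}^k e_{S_j}$ of $H_{n,d}$, not merely by single $e_S$. The uniform resolution is to inspect the aggregated inequality $d u_i \leq \deg u + 2d - n + d(a_i - k)$ at positions $i$ with $u_i = c$: the left side already saturates the $k = 0$ bound, so for $k \geq 1$ one needs $a_i \geq k$, i.e.\ $i$ lies in each $S_j$; since $|S_j| = d$, the bound $|\{i : u_i = c\}| \leq d$ follows at once, closing the minimality argument uniformly in $k$.
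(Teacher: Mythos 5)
Your proposal follows essentially the same overall route as the paper's proof: reduce to Laurent monomials $u/(x_1\cdots x_n)$ with $u$ a genuine monomial, translate membership in $\omega_{R_{n,d}}^{-1}$ into linear inequalities on $\log u$, identify the tight constant (the paper's $E=n-2d$, which you encode through the parameter $c$), characterize the minimal elements by $|\{i:u_i=c\}|\geq d+1$ (the paper's $|\mathcal{I}(u)|\geq d+1$), and normalize by sorting to land in $P_{n,d}$. Two points of comparison are worth recording. First, your minimality step is more careful than the paper's as written: the paper's Step 6 (``$\Leftarrow$'' direction) only rules out dividing $u$ by a single squarefree degree-$d$ monomial, whereas the partial order on $\mathcal{B}$ permits division by an arbitrary monomial of $R_{n,d}$, and a chain of single-generator reductions can leave $\mathcal{B}$ at intermediate steps. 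Your ``aggregated inequality'' $du_i \leq \deg u + 2d-n + d(a_i-k)$ treats arbitrary $a=\sum_{j=1}^k e_{S_j}$ uniformly and forces every $i$ with $u_i=c$ to lie in each $S_j$, closing this in one shot; this is a genuine tightening of the argument. Second, your proposal silently skips the membership check $u/(x_1\cdots x_n)\in Q(R_{n,d})$ that the paper treats separately in its Step 7. This is in fact automatic from the condition $\deg u\equiv n\pmod d$: the exponent vector of $u/(x_1\cdots x_n)$ has coordinate sum divisible by $d$ and therefore lies in the lattice $\ZZ H_{n,d}=\{a\in\ZZ^n:d\mid \sum a_i\}$, so the corresponding Laurent monomial is a quotient of two monomials of $R_{n,d}$. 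You should still state this, since $\omega^{-1}$ is by definition a subset of $Q(R)$, but it is a presentational omission rather than a mathematical gap. Beyond these two points the two arguments are the same; you would only need to make the ``extremal generator'' for $E=n-2d$ explicit (the paper uses $(\,n-2d+1,\dots,n-2d+1,1,\dots,1\,)$ with $d-1$ large entries), and to include the counting step $|\supp u|\geq d+1$ (the paper's Step 5) which you need when showing that $|\{i:u_i=c\}|\leq d$ lets you find a degree-$d$ squarefree divisor of $u$ containing those positions.
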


\begin{proof}
Let $R=R_{n,d}$ and  $\mathcal{A}$ denote the set of generators of $\omega_R$ as given in Theorem~\ref{thm:bvv-omega}.

{\em Step 1.} Note that by permuting the exponents of the variables of any  monomial  in $\mathcal{A}$ we get another monomial in $\mathcal{A}$.
This observation together with  (i) and (iii) in Theorem \ref{thm:bvv-omega} assures that $x_1\cdots x_n$ is the greatest common divisor (computed in $S$) of the monomials in $\mathcal{A}$.

{\em Step 2.} Since $\omega_R$ is generated by monomials in $R$, it follows that $\omega_R^{-1}$ is also generated by monomials in $K[x_1^{\pm 1}, \dots, x_n^{\pm 1}]$.
If $f=u/v \in \omega_R^{-1}$ with $u$ and $v$ coprime polynomials in $S$,
then $v$ divides the greatest common divisor of the monomials in $\mathcal{A}$, which is $x_1\cdots x_n$.
This shows that  for the purpose of determining a generating set for $\omega_R^{-1}$, it is enough to consider
  fractions $f=u/(x_1\cdots x_n)$ where $u$ is in the set
\begin{equation*}
\mathcal{B}=\{ u \text{   monomial in } S:\;  \deg u \equiv n \mod d, \; u \cdot\omega_R \subseteq x_1\cdots x_n R \}.
\end{equation*}
A priori, for $u\in \mathcal{B}$ the fraction $u/(x_1\cdots x_n)$ is not necessarily in $Q(R)$.

{\em Step 3.} We identify the elements of  $\mathcal{B}$ as follows.
A monomial $u=x_1^{b_1}\cdots x_n^{b_n}$ is in $\mathcal{B}$ if and only if
\begin{equation*}
x_1^{a_1+b_1-1}\cdots x_n^{a_n+b_n-1} \in R \text{ for all } x_1^{a_1}\cdots x_n^{a_n} \in \mathcal{A}.
\end{equation*}

This is equivalent, by Lemma \ref{lemma:sqfreevero}, to the fact that
\begin{eqnarray}
\label{eq:u-in-B}
\deg u &\equiv& n \mod d, \quad \text{and} \\
db_i+ (da_i+n-d) &\leq & \deg u+ \sum_{j=1}^n a_j, \text{ for } 1\leq i \leq n, \text{ and } x_1^{a_1}\cdots x_n^{a_n} \in \mathcal{A}.\nonumber
\end{eqnarray}
Letting
$$
E=\max\{ da_i+n-d-\sum_{j=1}^n a_j: 1\leq i \leq n,\;  x_1^{a_1}\cdots x_n^{a_n} \in \mathcal{A}\},
$$
the condition \eqref{eq:u-in-B} is equivalent to
\begin{eqnarray}
\label{eq:u-in-B-with-E}
\deg u \equiv n \mod d, \text{ and }
db_i+ E \leq \deg u,  \text{ for } 1\leq i \leq n.
\end{eqnarray}

 We note that since the monomials in $\mathcal{A}$ are closed to permutation of the exponents, the value of $E$ does not depend on $i$.
It immediately follows from \eqref{eq:u-in-B-with-E} that if $u'$ is obtained from $u \in \mathcal{B}$ by permuting its exponents, then also $u'\in \mathcal{B}$.

{\em Step 4.}
We claim that
$$E=n-2d.$$
It is easy to check with Theorem \ref{thm:bvv-omega}
that
$$
(\underbrace{n-2d+1, \dots, n-2d+1}_{d-1}, \underbrace{1,\dots,1}_{n-d+1})
$$
is the exponent of a monomial in $\mathcal{A}$, hence $E\geq n-2d$.

On the other hand, $1/(x_1\cdots x_d) \in \omega_R^{-1}$ by Lemma \ref{lemma:dvars}, hence $x_{d+1}\cdots x_n \in \mathcal{B}$.
From \eqref{eq:u-in-B-with-E} we get that $d+E\leq n-d$, which shows that $E=n-2d$.
Therefore \eqref{eq:u-in-B-with-E} becomes
\begin{eqnarray}
\label{eq:u-in-B-with-E-new}
\deg u \equiv n \mod d, \text{ and }
db_i+ (n-2d) \leq \deg u,  \text{ for } 1\leq i \leq n.
\end{eqnarray}

{\em Step 5.} A consequence of \eqref{eq:u-in-B-with-E} is that  there are at least $d+1$ distinct variables  dividing $u$.
Indeed, arguing by contradiction, if there are at most $d$ distinct variables dividing $u$ and say, $b_{d+1}=\dots =b_n=0$,
we get by summing the inequality in \eqref{eq:u-in-B-with-E} for $i=1,\dots, d$ that
$$
\sum_{i=1}^d (db_i +E) \leq d\deg u= d \sum_{i=1}^db_i,
$$
 equivalently
$dE\leq 0$, which is false.

{\em Step 6.}
We define a partial order on $\mathcal{B}$ by letting $u \leq v$ if $v/u \in R$.
Our next goal is to identify the elements of $\mathcal{B}$  minimal  with respect to this partial order.

For $u=x_1^{b_1}\cdots x_n^{b_n}$  in $\mathcal{B}$ we set
$$
\mathcal{I}(u)=\{x_i: d b_i+(n-2d)=\deg u \}.
$$

It follows from \eqref{eq:u-in-B-with-E-new}, arguing modulo $d$, that if for some $i$ we have $d b_i+ E<\deg u$, then  $d b_i+ E \leq \deg u -d$.

We claim that
\begin{equation}
\label{eq:i}
u \text{ is minimal in } \mathcal{B} \iff |\mathcal{I}(u)| \geq d+1.
\end{equation}

Indeed, if $|\mathcal{I}(u)| \leq d$, then  there exists $m$ a squarefree monomial of degree $d$ which contains the variables in $\mathcal{I}(u)$.
Then  $u'=u/m$ verifies \eqref{eq:u-in-B-with-E-new}, hence $u' \in \mathcal{B}$ and $u$ is not minimal in $\mathcal{B}$.

On the other hand, if $|\mathcal{I}(u)| \geq d+1$, then no matter by which product of $d$ distinct variables we divide $u$ to obtain a monomial $u'$,
for at least some $x_{i_0} \in \mathcal{I}(u)$ the inequality in \eqref{eq:u-in-B-with-E-new} applied to $u'$ and $i_0$ does not hold.
This confirms \eqref{eq:i}.

Based on \eqref{eq:i}, we describe the minimal elements in $\mathcal{B}$ as follows.
From \eqref{eq:u-in-B-with-E-new} we infer that   $\deg u \geq n-d$,  and equality holds if and only if
$u$ is a product of $n-d$ distinct variables.  These monomials are clearly minimal in $\mathcal{B}$.

Assume $u$ is minimal in $\mathcal{B}$ and $\deg u > n-d$. Since $\deg u \equiv n \mod d$ we have $\deg u \geq n$.
We may pick $i_1<\dots<i_{d+1}$ in $\mathcal{I}(u)$ and we let $c=b_{i_1}=\dots= b_{i_{d+1}}$.
Clearly, $c=\max\{b_i:1\leq i \leq n\}$ is greater than $1$, otherwise $u=x_1\cdots x_n$ for which $\mathcal{I}(u)=\emptyset$, a contradiction.
Then
$$
n-2d=c+ \sum_{j\notin\{i_1, \dots, i_{d+1}\}}b_j.
$$
It follows that $2\leq c \leq n-2d$ and that for at least $(n-d-1)-(n-2d-c)=d+c-1$ indices $j \notin \{i_1, \dots, i_{d+1}\}$ we have $b_j=0$.
Equivalently, up to a permutation of the entries, $\log u$ is in $P_{n,d}$.

{\em Step 7.} We claim that for any $u$ a minimal element in $\mathcal{B}$ we have that $f=u/(x_1\cdots x_n)$ is in $Q(R)$.
That would imply that these $f$'s generate $\omega_R^{-1}$ minimally.

We write $n=\ell d+r$ with $1\leq r\leq d$ and we let $m$  be the product of some distinct $r$ variables dividing $u$.
Since $n>2d$, we get $n\geq 2d+r$, hence by \eqref{eq:u-in-B-with-E-new} we obtain
$$
db_i \leq \deg u - (n-2d)  \leq \deg u -r, \text{ for } 1\leq i \leq n.
$$
This implies that $u/m$ is in $R$, using Lemma \ref{lemma:sqfreevero}.
Clearly $(x_1\cdots x_n)/m$ is also in $R$ as a product of  $\ell d$ distinct variables, hence $f\in Q(R)$.
This concludes the proof.
\end{proof}

We can now finish the proof of Lemma \ref{lemma:dvars}.

\begin{proof} (of Lemma \ref{lemma:dvars}, continued)
According to Theorem \ref{thm:sqfreevero-anticanonical},
the equality holds in Lemma \ref{lemma:dvars} if and only if the set $P_{n,d}$   is empty.

As noted in the proof of Theorem \ref{thm:sqfreevero-anticanonical}, any vector in $P_{n,d}$ has at least $d+1$ non zero entries,
and at least $d+1$ zero entries, hence $n\geq  2d+2$. This shows that $P_{n,d}=\emptyset$ for $n=2d+1$.
Conversely, if $n\geq 2d+2$, it is easy to check that
$$
(\underbrace{n-2d, \dots, n-2d}_{d+1}, \underbrace{0, \dots, 0}_{n-d-1}) \in P_{n,d},
$$
hence $P_{n,d}$ is not empty in this case.
\end{proof}

\begin{Example}
{\em We computed the sets $P_{n,2}$ for small  values of $n$.
For brevity, we record only the non zero entries of the vectors in $P_{n,2}$, as is it clear how many zeroes one should add to reconstruct $\log u$.\\
$P_{5,2}= \emptyset$, $P_{6,2}=\{(2,2,2)\}$, $P_{7,2}=\{(2,2,2,1), (3,3,3) \}$,\\
$P_{8,2}=\{(2,2,2,2), (2,2,2,1,1), (3,3,3,1),  (4,4,4)\}$,\\
$P_{9,2}=\{  (2,2,2,1,1,1)  , (2,2,2,2,1),  (3,3,3,1,1),  (3,3,3,2), (4,4,4,1),(5,5,5)\}$,\\
$P_{10,2}=\{(2,2,2,2,2), (2,2,2,2,1,1), (2,2,2,1,1,1,1), (3,3,3,3), (3,3,3,2,1), \\
(3,3,3,1,1,1), (4,4,4,2), (4,4,4,1,1), (5,5,5,1), (6,6,6) \}$. 
}
\end{Example}

The next result shows that a squarefree Veronese subalgebra of $S$ is not nearly Gorenstein, unless it is already Gorenstein.

\begin{Theorem}
\label{thm:sqfreevero} For  $1\leq d \leq n$ let $R=R_{n,d}$. The following are equivalent:
\begin{enumerate}
\item[{\em (i)}] $R$ is nearly Gorenstein;
\item[{\em (ii)}] $R$ is Gorenstein;
\item[{\em (iii)}] $d=1$ or $d=n-1$ or $n=2d$.
\end{enumerate}
\end{Theorem}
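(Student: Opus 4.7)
The equivalence (ii) $\iff$ (iii) is the De Negri--Hibi criterion recalled just before Lemma \ref{lemma:sqfreevero}, and (ii) $\Rightarrow$ (i) is tautological, so the only content is (i) $\Rightarrow$ (iii). My plan is to read off the bottom graded degrees of $\omega_R$ and $\omega_R^{-1}$ from Theorems \ref{thm:bvv-omega} and \ref{thm:sqfreevero-anticanonical}, and combine them via the identity $\tr(\omega_R) = \omega_R \cdot \omega_R^{-1}$ from Lemma \ref{lemma:traceideal} to show that $\tr(\omega_R)$ has no elements in $R$-degree $1$. First I would use the graded algebra isomorphism $R_{n,d} \cong R_{n,n-d}$ induced by the complementation map $m \mapsto (x_1\cdots x_n)/m$ from \cite[Remark 2.13]{BVV} to replace $d$ by $n-d$ if necessary, and so assume $d \leq n/2$. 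The cases $d = 1$ and $n = 2d$ already satisfy (iii), so the only remaining range is $n > 2d \geq 4$, in which I must show $R := R_{n,d}$ is not nearly Gorenstein. Since $R$ is a domain, it is generically Gorenstein, and $\tr(\omega_R) = \omega_R \cdot \omega_R^{-1}$ is a graded ideal of $R$ under the standard grading in which elements of $V_{n,d}$ sit in degree $1$.

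Next I would extract bottom-degree information from each factor. For $\omega_R$: every monomial generator $x_1^{a_1}\cdots x_n^{a_n}$ listed in Theorem \ref{thm:bvv-omega} has $a_i \geq 1$ and $\sum a_i \equiv 0 \pmod{d}$, so its $S$-degree is a multiple of $d$ that is at least $n$, and its $R$-degree is therefore at least $\lceil n/d \rceil$. A brief check in the two subcases $d \mid n$ and $d \nmid n$, using $n > 2d$ and $d \geq 2$, yields $\lceil n/d \rceil \geq 3$. For $\omega_R^{-1}$: every generator listed in Theorem \ref{thm:sqfreevero-anticanonical} has the form $u/(x_1\cdots x_n)$ with $\deg_S u \geq n - d$, the minimum being attained by the Type A generators $1/(x_{i_1}\cdots x_{i_d})$ of Lemma \ref{lemma:dvars}. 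In the $R$-grading this says every generator of $\omega_R^{-1}$ has degree at least $-1$.

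Combining the two bounds, every homogeneous element of $\omega_R \cdot \omega_R^{-1}$ has $R$-degree at least $3 + (-1) = 2$, so $\tr(\omega_R)$ has no nonzero elements in degree $1$. Since $\mm$ has a nonzero degree-$1$ component (spanned by $V_{n,d}$), we conclude $\mm \not\subseteq \tr(\omega_R)$ and $R$ is not nearly Gorenstein, establishing (i) $\Rightarrow$ (iii). The only technically demanding input is the bottom-degree description of $\omega_R^{-1}$, for which the full classification in Theorem \ref{thm:sqfreevero-anticanonical} is needed; once that is available, the remainder of the argument is the one-line graded-degree count above, and I do not foresee further serious obstacles.
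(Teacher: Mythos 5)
Your proposal is correct and follows essentially the same route as the paper: reduce to the case $n > 2d \geq 4$ via the complementation isomorphism and the De Negri--Hibi criterion, then combine the bounds $\deg_S m \geq n$ for generators $m$ of $\omega_R$ (Theorem~\ref{thm:bvv-omega}) with $\deg_S u \geq n-d$ for generators $u/(x_1\cdots x_n)$ of $\omega_R^{-1}$ (Theorem~\ref{thm:sqfreevero-anticanonical}) to conclude that every generator of $\tr(\omega_R)=\omega_R\cdot\omega_R^{-1}$ has $S$-degree at least $n-d\geq d+1$, hence $R$-degree at least $2$, so $\tr(\omega_R)\subsetneq\mm_R$. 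The paper states the same degree count directly in terms of $S$-degree; your version with $\lceil n/d\rceil\geq 3$ and $\deg_R f\geq -1$ is just a reformulation in $R$-degree.
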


\begin{proof}
It is clear that (ii) implies (i). The equivalence between (ii) and (iii) was shown by De Negri and Hibi in \cite{DeNegri-Hibi}, and also by Bruns, Vasconcelos and Villarreal in \cite{BVV}.
So, it is enough to prove that if $2< 2d <n$, then $\tr (\omega_R)\subsetneq \mm_R$.
Indeed, if $m$ is a monomial generator for $\omega_R$ as given by Theorem \ref{thm:bvv-omega},
and  $f=u/(x_1\cdots x_n)$ is a generator for $\omega_R^{-1}$ as given by Theorem \ref{thm:sqfreevero-anticanonical}, then $\deg m \geq n$ and $\deg u \geq n-d$.
Thus, $mf$ is a monomial of degree at least $n-d\geq d+1$ and $\tr (\omega_R)\subsetneq \mm_R$.
\end{proof}

\subsection{Segre products}

If $R$ and $S$ are $\NN$-graded $K$-algebras, their Segre product $T=R\sharp S$ is the graded $K$-algebra whose components are
$T_i=R_i \otimes_K S_i$ for  all $i$.

In this part we assume that $R$ and $S$ are standard graded Gorenstein rings.
We further need that $T$ is Cohen--Macaulay. For example, if one assumes that  $\dim R,\dim S\geq 2$, 
then $T$ is Cohen--Macaulay  if and only if $a(R),a(S)<0$,  see  \cite[Theorem 4.4.4]{GW}. 
Here $a(R'):=-\min\{i\: (\omega_{R'})_i\neq 0\}$ denotes the $a$-invariant of a graded Cohen--Macaulay $K$-algebra $R'$.

We recall that an affine semigroup $H$ is a finitely generated submonoid of $\NN^d$. 
Moreover, we say that $H$ is homogeneous  if all its minimal generators lie on an affine hyperplane. 
Equivalently, the semigroup ring $K[H]$ is standard graded  by assigning the degree one 
to all the monomials corresponding to the minimal generators of $H$. In that case, we also say that $K[H]$ is a homogeneous semigroup ring.

If the semigroup rings $R=K[H_1]$ and $S=K[H_2]$ are homogeneous, then it is easy to see that their Segre product $R\sharp S$ is again a homogeneous semigroup ring.

\begin{Theorem}
\label{general segre}
Let $R$ and $S$ be standard graded Gorenstein $K$-algebras  of Krull  dimension at least $2$ with $a$-invariants $a(R)=r$ and $a(S)=s$,
and  assume that $T=R\sharp S$ is Cohen--Macaulay.
Then the following hold:
\begin{enumerate}
\item[{\em (i)}] $\tr(\omega_T)\supseteq\mm_T^{|r-s|}$;
\item[{\em (ii)}] if $R$ and $S$ are homogeneous semigroup rings, then $\tr(\omega_T)=\mm_T^{|r-s|}$.
\end{enumerate}
\end{Theorem}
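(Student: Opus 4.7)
The plan is to combine Goto--Watanabe's formula $\omega_T\iso\omega_R\sharp\omega_S$ with a direct analysis of the graded $T$-homomorphisms $\omega_T\to T$, carried out inside the bigraded ring $R\tensor_K S$ (in which $T$ sits as the diagonal subring). Since $R$ and $S$ are Gorenstein and standard graded, $\omega_R\iso R(r)$ and $\omega_S\iso S(s)$, so
$$\omega_T\iso\Dirsum_i R_{i+r}\tensor_K S_{i+s}.$$
Assuming without loss of generality that $s\geq r$ and setting $c:=s-r=|r-s|$, this realizes $\omega_T$ inside $R\tensor_K S$ as the bigraded $T$-submodule collecting all bidegrees $(a,a+c)$, $a\geq 0$. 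Its lowest $T$-degree is $-r$, where $(\omega_T)_{-r}=S_c$, and because $S$ is standard graded, $T_k\cdot S_c=R_k\tensor_K S_{k+c}=(\omega_T)_{k-r}$, so $\omega_T$ is generated as a $T$-module by $S_c$.

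For (i), each $a\in R_c$ (of bidegree $(c,0)$) yields a $T$-linear map $\varphi_a\:\omega_T\to T$ by multiplication in $R\tensor_K S$: bidegrees $(c,0)+(j,j+c)=(j+c,j+c)$ are diagonal. Its image is the ideal $(a\cdot S_c)T\subseteq T$, and summing over $a\in R_c$ yields
$$\sum_{a\in R_c}\varphi_a(\omega_T)=(R_c\cdot S_c)T=T_c\cdot T=\mm_T^c,$$
so $\tr(\omega_T)\supseteq\mm_T^{|r-s|}$.

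For (ii), first note that $T$ being a domain forces both $R$ and $S$ to be domains: a relation $rr'=0$ with nonzero $r\in R_k$, $r'\in R_{k'}$ (necessarily $k,k'\geq 1$ since $R_0=K$) lifts, via any nonzero $v\in S_k$, $v'\in S_{k'}$, to $0\neq r\tensor v,r'\tensor v'\in T$ whose product vanishes. It then suffices to prove that every homogeneous $T$-linear map $\psi\:\omega_T\to T$ of graded degree $d$ vanishes when $d<s$, since a nonzero such $\psi$ has image in degrees $\geq d-r\geq c$, hence in $\mm_T^c$. Given $\psi$ of degree $d$, let $\xi:=\psi|_{S_c}\:S_c\to T_{d-r}=R_{d-r}\tensor_K S_{d-r}$; choose a $K$-basis $\{p_m\}$ of $R_{d-r}$ and write $\xi(y)=\sum_m p_m\tensor\zeta_m(y)$ for $K$-linear $\zeta_m\:S_c\to S_{d-r}$. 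The $T$-module relations of $\omega_T$ come from the kernels $K_{a,c}:=\ker(S_a\tensor_K S_c\to S_{a+c})$, tensored on the left with $R_a$; applied to $\psi$, the $T$-linearity yields that for every $u\in R_a$ and every $\sum_\lambda v_\lambda\tensor y_\lambda\in K_{a,c}$, the element $u\cdot\sum_m p_m\tensor\bigl(\sum_\lambda v_\lambda\zeta_m(y_\lambda)\bigr)$ vanishes in $R\tensor_K S$. Since $R$ is a domain and $S$ is $K$-flat, multiplication by any nonzero $u\in R_a$ is injective on $R\tensor_K S$, so $\sum_\lambda v_\lambda\zeta_m(y_\lambda)=0$ for every $m$ and every such relation. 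This means each $\zeta_m$ extends to an $S$-linear map $\widetilde\zeta_m\:\mm_S^c\to S$ of degree $d-s$ by $\widetilde\zeta_m(vy)=v\zeta_m(y)$. Because $\grade\mm_S^c=\dim S\geq 2$, the remark following Lemma \ref{lemma:traceideal} gives $\Hom_S(\mm_S^c,S)\iso S$, so $\widetilde\zeta_m$ is multiplication by an element of $S_{d-s}$; but $S_{d-s}=0$ when $d<s$, forcing $\zeta_m=0$ and $\psi=0$.

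The main obstacle will be the translation of the $T$-linearity of $\psi$ into the $S$-linearity of the $\zeta_m$'s. This requires carefully identifying the relations of $\omega_T$ as a $T$-module in terms of the standard-graded structure of $S$, and then using the domain hypothesis on $R$ to strip off the $R$-factor in the resulting constraint equations.
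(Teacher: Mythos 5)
Your part (i) is essentially the paper's argument: it also generates $\omega_T$ by $(\omega_T)_{-r}=R_0\otimes_K S_{s-r}$ and uses the multiplication maps $f\colon a\otimes b\mapsto fa\otimes b$ for $f\in R_{s-r}$ to produce $R_{s-r}\otimes S_{s-r}\subseteq\tr(\omega_T)$.

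Your part (ii), however, takes a genuinely different route, and it is correct. The paper picks a nonzero $f\in R_{s-r}$, uses the domain hypothesis to identify $\omega_T$ with the ideal $I=(f\otimes S_{s-r})T\subset T$, and then shows $I\cdot I^{-1}\subseteq\mm_T^{s-r}$ by a quick degree argument: any graded $x\in I^{-1}$ is of the form $(g_1/h_1)\otimes(g_2/h_2)$, and the condition $xI\subseteq T$ forces $g_2/h_2\in(\mm_S^{s-r})^{-1}=S$ (since $\grade\mm_S^{s-r}\geq 2$), whence $\deg x\geq 0$ and $xI\subseteq\mm_T^{s-r}$. You instead work directly with the graded pieces of $\Hom_T(\omega_T,T)$: you present $\omega_T$ by generators $S_c$ with relations $R_a\otimes_K\ker(S_a\otimes_K S_c\to S_{a+c})$, decompose a degree-$d$ map $\psi$ via a $K$-basis of $R_{d-r}$ into $K$-linear maps $\zeta_m\colon S_c\to S_{d-r}$, use that multiplication by a nonzero $u\in R_a$ is injective on $R\otimes_K S$ (flatness of $S$ over $K$ plus $R$ a domain) to strip off the $R$-factor, conclude that each $\zeta_m$ extends $S$-linearly to $\mm_S^c$, and finally invoke $\Hom_S(\mm_S^c,S)\cong S$ to kill $\zeta_m$ when $d<s$. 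Both proofs ultimately rest on the same two inputs — Goto--Watanabe's $\omega_T\cong\omega_R\sharp\omega_S$ and $\grade\mm_S^c\geq 2$ so that $\Hom_S(\mm_S^c,S)\cong S$ — but the paper's route through the anti-canonical ideal is shorter, while yours avoids fractional ideals and exhibits the vanishing of low-degree homomorphisms explicitly, which makes the role of the domain hypothesis on $R$ more transparent. Your explicit verification that $T$ a domain forces $R$ and $S$ to be domains is a detail the paper states without proof; you only actually use the domain property of $R$, matching the paper.
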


\begin{proof}
(i) By \cite[Theorem 4.3.1]{GW} of Goto and Watanabe one has that
$\omega_T \cong \omega_R\sharp \omega_S$.
Since $\omega_R \cong R(r)$ and $\omega_S\cong S(s)$, it follows that we may identify $\omega_T$ with $R(r)\# S(s)$. We may assume $s\geq r$.
Then it can be seen that $\omega_T$ is generated as a $T$-module by $(\omega_T)_{-r}=R_0\tensor_K S_{s-r}$. Each  $f\in R_{s-r}$ induces for each $i$ a $K$-linear map
\[
(\omega_T)_i=R_{i+r}\tensor S_{i+s}\to R_{i+s}\tensor S_{i+s}=T_{i+s}, \quad a\tensor b\mapsto fa\tensor b.
\]
These $K$-linear maps  compose to a $T$-linear map $f\:\omega_T\to T$.
The image of this map is   generated by  $f\tensor S_{s-r}$.
Thus, as $f$ varies over all $f\in R_{s-r}$ we see that  $\tr(\omega_T)\supseteq R_{s-r}\tensor S_{ s-r}$.
This yields the desired conclusion, since  $\mm_T^{s-r}$ is generated by the elements of  $R_{s-r}\tensor S_{ s-r}$.

(ii)  We choose a non-zero  monomial $f\in R_{s-r}$.
Since  clearly, $T$  and $R$ are  domains, the $T$-module homomorphism $f\: \omega_T\to T$, defined in (i)  is injective.
Therefore, $\omega_T$ is isomorphic to its image in $T$, and hence due to (i)
it suffices to show that $I^{-1}\cdot I\subseteq\mm_T^{s-r}$ for the ideal $I$ in $T$ generated by $f\tensor S_{s-r}$.

Since $I$ is    generated by monomials, its inverse $I^{-1}$ is   generated by monomials , as well. 
Let $x\in I^{-1}$, where
$
x=(g_1\tensor g_2)/(h_1\tensor h_2)=(g_1/h_1)\tensor (g_2/h_2)
$
with $g_1\tensor g_2\in T_a$ and $h_1\tensor h_2\in T_b$.
Then $fg_1/h_1\tensor g_2/h_2S_{s-r}\subseteq T$.
In particular, $g_2/h_2\in J^{-1}$ where   $J=\mm_S^{s-  r}$.
Since $\dim S\geq 2$, it follows that  $\grade J\geq 2$, so that $J^{-1}=S$.  Therefore,  $a-b\geq 0$,  which implies that $xI\subseteq \mm_T^{s-r}$.
\end{proof}

\begin{Corollary}
\label{nearlysegre}
Assume  the hypotheses of Theorem \ref{general segre}.

If $|r-s|\leq 1$ then the Segre product $T=R\# S$ is nearly Gorenstein.
Moreover, if     $R$ and $S$ are homogeneous semigroup rings,  and
 $T$ is nearly Gorenstein, then 
 $|r-s|\leq 1$.
\end{Corollary}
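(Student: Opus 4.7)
The plan is to derive both implications directly from Theorem~\ref{general segre}. For the forward direction, assume $|r-s|\leq 1$. If $|r-s|=0$, then part (i) of Theorem~\ref{general segre} gives $\tr(\omega_T)\supseteq \mm_T^0=T$, so $T$ is Gorenstein and a fortiori nearly Gorenstein. If $|r-s|=1$, the same inclusion yields $\tr(\omega_T)\supseteq \mm_T$, which is precisely the definition of nearly Gorenstein.

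For the converse, suppose $T$ is a nearly Gorenstein domain, so $\mm_T\subseteq \tr(\omega_T)$. Since $T$ is a domain, part (ii) of Theorem~\ref{general segre} applies and yields the equality $\tr(\omega_T)=\mm_T^{|r-s|}$. Combining these, we obtain $\mm_T\subseteq \mm_T^{|r-s|}$.

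Arguing by contradiction, assume $|r-s|\geq 2$. Then $\mm_T\subseteq \mm_T^{|r-s|}\subseteq \mm_T^2$. Because $T$ is standard graded, $\mm_T$ is generated by $T_1$; the containment $T_1\subseteq (\mm_T^2)_1=0$ forces $T_1=0$, hence $T=K$. But $\dim T=\dim R+\dim S-1\geq 3$ by the hypothesis $\dim R,\dim S\geq 2$, a contradiction. Therefore $|r-s|\leq 1$.

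The argument is essentially immediate once Theorem~\ref{general segre} is in hand, so there is no substantial obstacle; the only step requiring minor care is the observation that a positive-dimensional standard graded $K$-algebra cannot satisfy $\mm_T\subseteq \mm_T^2$, which follows from the fact that $\mm_T$ is generated in degree one.
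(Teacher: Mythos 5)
Your proof is correct and is essentially the argument the paper intends — the corollary is stated without a separate proof, as an immediate consequence of Theorem~\ref{general segre}, and both directions follow exactly as you lay out. The only remark worth making is that for the contradiction step one does not even need the dimension formula for Segre products: since $R$ and $S$ are standard graded of dimension at least $2$, both $R_1$ and $S_1$ are nonzero, hence $T_1=R_1\otimes_K S_1\neq 0$, which already rules out $\mm_T\subseteq\mm_T^2$.
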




In the special case that $R=K[x_1,\ldots,x_r]$ and $S=K[y_1,\ldots,y_s]$ are polynomial  rings and $r\geq s\geq 2$, the anti-canonical ideal of $T$ can be easily computed. We remark that $T$ may be identified with
$$
K[x_iy_j:1\leq i\leq r, 1\leq j \leq s] \subset K[x, y].
$$
Any monomial $u\in K[x_1, \dots, x_r, y_1, \dots, y_s]$ may be uniquely written $u=u_x u_y$, where $u_x$ and $u_y$ are monomials in $R$, and respectively in $S$. 
After this identification, a $K$-basis of $T$ is given by the monomials $u=u_xu_y$ with $\deg u_x=\deg u_y$. 
Furthermore, by Theorem~\ref{general segre}, since $a(R)=-r$ and $a(S)=-s$,  the canonical module of $T$ is isomorphic to the ideal
\begin{equation*}
\label{eq:omega-segre}
I=(x_1^{r-s} y^\beta\:\; |\beta|=r-s) \subset T.
\end{equation*}

\begin{Proposition}
\label{lemma:i}
With notation introduced, $I^{-1}$ is generated as a $T$-module by the fractions $x^{\alpha}/x_1^{r-s}$ with $0\leq \alpha$ and $|\alpha|=r-s$.
\end{Proposition}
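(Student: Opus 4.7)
The plan is to use the fine $\ZZ^{r+s}$-multigrading that $T$ inherits from $K[x_1,\dots,x_r,y_1,\dots,y_s]$, under which $T$ has the $K$-basis $\{x^\gamma y^\delta : \gamma\in\NN^r,\,\delta\in\NN^s,\,|\gamma|=|\delta|\}$. Accordingly $Q(T)$ is graded by the lattice $L=\{(\gamma,\delta)\in\ZZ^{r+s}:|\gamma|=|\delta|\}$, each of its homogeneous components is at most one-dimensional over $K$, and since $I$ is a monomial ideal, $I^{-1}$ is $L$-graded as well. Consequently $I^{-1}$ is spanned over $K$ by Laurent monomials $x^\gamma y^\delta$ with $|\gamma|=|\delta|$, and it will suffice to check that (a) each claimed generator lies in $I^{-1}$, and (b) any Laurent monomial $f=x^\gamma y^\delta\in I^{-1}$ has the form $t\cdot x^\alpha/x_1^{r-s}$ with $t\in T$ and $\alpha\in\NN^r$, $|\alpha|=r-s$.

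Part (a) is an immediate computation: for each generator $x_1^{r-s}y^\beta$ of $I$ with $|\beta|=r-s$, the product $(x^\alpha/x_1^{r-s})\cdot x_1^{r-s}y^\beta=x^\alpha y^\beta$ has total $x$-degree equal to total $y$-degree (both equal to $r-s$), hence lies in $T$.

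For (b), I would extract the following conditions from testing $fI\subseteq T$ against the generators $x_1^{r-s}y^\beta$ as $\beta\in\NN^s$, $|\beta|=r-s$ varies: $\gamma_i\geq 0$ for $i\geq 2$, $\gamma_1\geq -(r-s)$, and (using $s\geq 2$ to arrange any prescribed coordinate of $\beta$ to vanish) $\delta_j\geq 0$ for all $j$. Setting $e:=\gamma_1+(r-s)\geq 0$, the identity $|\gamma|=|\delta|$ rewrites as $e+\gamma_2+\dots+\gamma_r=|\delta|+(r-s)\geq r-s$, so one can pick $\alpha\in\NN^r$ with $\alpha_1\leq e$, $\alpha_i\leq\gamma_i$ for $i\geq 2$, and $|\alpha|=r-s$. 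The resulting Laurent monomial $t:=f\cdot x_1^{r-s}/x^\alpha=x_1^{e-\alpha_1}\prod_{i\geq 2}x_i^{\gamma_i-\alpha_i}\cdot y^\delta$ has nonnegative exponents, and its total $x$- and $y$-degrees are both equal to $|\delta|$, so $t\in T$ and $f=t\cdot(x^\alpha/x_1^{r-s})$.

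The main obstacle is the small combinatorial verification in (b) that a componentwise-bounded $\alpha$ with $|\alpha|=r-s$ actually exists, which reduces to the single inequality $e+\gamma_2+\dots+\gamma_r\geq r-s$ coming from $|\delta|\geq 0$. Everything else is bookkeeping that is made transparent by the toric multigrading, which allows us to treat a single Laurent monomial at a time.
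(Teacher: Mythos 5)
Your proof is correct and follows essentially the same route as the paper: restrict to (Laurent) monomials via the toric structure, extract the exponent constraints from $fI\subseteq T$ (using $s\geq 2$), and peel off a factor $x^\alpha/x_1^{r-s}$ by a small combinatorial choice of $\alpha$. Your presentation via the $\ZZ^{r+s}$-multigrading merely makes explicit what the paper compresses into the coprime-fraction reduction and the decomposition $u_x=x^\alpha\cdot x^{\alpha'}$.
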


\begin{proof}
Let $f=x^{\alpha}/x_1^{r-s}$ with $0\leq \alpha$ and $|\alpha|=r-s$.
Then $f=(x^{\alpha}y_1^{r-s})/(x_1y_1)^{r-s}$ is in $Q(T)$ and for any generator of $I$,  $g=x_1^{r-s}y^\beta$ with $|\beta|=r-s$, clearly $fg=x^\alpha y^\beta$ is in $T$.
Therefore, the  given set of fractions is part of $I^{-1}$. We show that it  generates it, too.

Let $u, v$ be coprime polynomials in $K[x,y]$  with $u/v \in I^{-1}$. Then $v$ divides $x_1^{r-s}y^\beta$ for all $\beta \geq 0$. Since $s\geq 2$ we get that $v$ divides $x_1^{r-s}$.
As $T$ is a monomial subalgebra of $K[x,y]$, it follows that for determining a generating system of $I^{-1}$
it is enough to consider fractions of the form $f=u/x_1^{r-s}$ with $u$ a monomial in $K[x,y]$. Such an $f$  belongs to $Q(T)$ if and only if  $\deg u_x=(r-s)+\deg u_y$.
If that is the case, we may decompose $u_x=x^\alpha \cdot x^{\alpha'}$ with $\alpha, \alpha'$ vectors with nonnegative integer entries and $|\alpha|=r-s$.
Letting $w=x^{\alpha'}u_y$, since  $|\alpha'|=\deg u_y$ we get that $w \in T$.
Therefore $f=(x^\alpha/x_1^{r-s})w$, which shows that the given set of fractions generates $I^{-1}$. This completes the proof of the proposition.
\end{proof}

\begin{Remark}
{\em
Proposition~\ref{lemma:i} implies that $\omega_T^{-1}$ is isomorphic to the ideal
\[
J=( x^\alpha y_1^{r-s}\:\; |\alpha|=r-s) \subset T.
\]
}
\end{Remark}

\medskip

\section{Nearly Gorenstein Hibi rings}
\label{sec:hibi}
As an application of the results of the previous section on Segre products we will classify in this section the nearly Gorenstein Hibi rings.

Let $P = \{p_1, \ldots, p_n \}$ be a finite partially ordered set
(``poset'' for short).
A {\em chain} of $P$ is a totally ordered subset of $P$.
The {\em length} of a chain $C$ is $|C| - 1$.
The {\em rank} of $P$ is the maximal length of chains of $P$
and it is denoted by $\rank(P)$.  A poset $P$ is called {\em pure}
if the length of each maximal chain of $P$ is equal to $\rank(P)$.

A {\em poset ideal} of $P$ is a subset $\alpha \subset P$
with the property that if $a \in \alpha$ and $b \in P$ with $b \leq a$,
then $b \in \alpha$.  In particular, the empty set, as well as $P$ itself,
is a poset ideal.  Let $\Jc(P)$ denote the set of poset ideals of $P$.
If $\alpha$ and $\beta$ are poset ideals of $P$, then each of
$\alpha \cap \beta$ and $\alpha \cup \beta$ is again a poset ideal of $P$.
Hence $\Jc(P)$ is a finite lattice (\cite[p.~157]{HH}) ordered by inclusion.

A finite lattice $L$ is called {\em distributive} if, for all $a, b, c$
belonging to $L$, one has
\begin{eqnarray*}
(a \vee b) \wedge c =  (a \wedge c) \vee (b \wedge c), \\
(a \wedge b) \vee c =  (a \vee c) \wedge (b \vee c).
\end{eqnarray*}
For example, for an arbitrary finite poset $P$, the finite lattice
$\Jc(P)$ is distributive.  Furthermore,
Birkhoff's fundamental structure theorem
for finite distributive lattices (\cite[Theorem 9.1.7]{HH})
guarantees the converse.  More precisely, given a finite distributive
lattice $L$, there is a unique finite poset $P$ with $L = \Jc(P)$.

Let $S = K[x_1, \ldots, x_n, s]$ denote the polynomial ring in $n + 1$ variables
over a field $K$.  Given a poset ideal $\alpha \in \Jc(P)$,
we introduce the squarefree monomial
\[
u_\alpha = \left(\prod_{p_i \in \alpha}x_i \right)s
\]
which belongs to $S$.  In particular,
$u_\emptyset = s$ and $u_P = x_1 \cdots x_n s$.
In \cite{TH87} the toric ring
\[
\mathcal{R}_K[L] = K[\{ \, u_\alpha \, : \, \alpha \in \Jc(P) \, \}] \subset S
\]
is introduced   and it is nowadays called {\em the Hibi ring of $L$ defined over $K$}.
 It is standard graded by letting $\deg(u_\alpha)=1$ for each $\alpha \in \Jc(P)$.
 The toric ring $\mathcal{R}_K[L]$ is normal and Cohen--Macaulay  of dimension $|P|+1$, see \cite{TH87}.
It follows that the quotient field of $\mathcal{R}_K[L]$
is the rational function field $K(x_1, \ldots, x_n, s)$.
Furthermore, the $a$-invariant $a(\mathcal{R}_K[L])$ of $\mathcal{R}_K[L]$
coincides with $-(\rank(P)+2)$.

Let $\widehat{P} = P \cup \{-\infty, +\infty \}$ with
$-\infty < a < +\infty$ for each $a \in P$.
Write $\Omega(P)$ for the set of those order-reversing maps
$\delta : \widehat{P} \to \ZZ_{\geq 0}$ with $\delta(+\infty) = 0$
and $\Omega^*(P)$ for the set of those strictly order-reversing
maps $\delta : \widehat{P} \to \ZZ_{\geq 0}$ with $\delta(+\infty) = 0$.
Let $\Mc(P)$ denote the set of those maps
$\gamma : \widehat{P} \to \ZZ$ with $\gamma(+\infty) = 0$
for which $\delta + \gamma \in \Omega(P)$ for each $\delta \in \Omega^*(P)$.

Given a map $\xi : \widehat{P} \to \ZZ_{\geq 0}$ with $\xi(+\infty) = 0$,
we introduce the monomial
\[
u^\xi = \left(\prod_{i=1}^{n}x_i^{\xi(p_i)}\right)s^{\xi(-\infty)}
\]
which belongs to $S$.  It is shown in \cite{TH87} that $\mathcal{R}_K[L]$
with $L = \Jc(P)$
is spanned by those monomials $u^\delta$ with $\delta \in \Omega(P)$.
Let $\omega_{\mathcal{R}_K[L]}$ denote the ideal of $\mathcal{R}_K[L]$
which is generated by those monomials $u^\delta$ with $\delta \in \Omega^*(P)$.
Then the ideal $\omega_{\mathcal{R}_K[L]}$ is isomorphic to the canonical ideal
of $\mathcal{R}_K[L]$.  It then follows that $\mathcal{R}_K[L]$
is Gorenstein if and only if $P$ is pure.

We now turn to the problem of finding a characterization on $P$
for which the toric ring $\mathcal{R}_K[L]$ with $L = \Jc(P)$
is nearly Gorenstein.
Given $a \in P$, we introduce the intervals
$[-\infty, a] = \{ \, b \in \widehat{P} \, : \, -\infty \leq b \leq a \, \}$
and
$[a, +\infty] = \{ \, b \in \widehat{P} \, : \, a \leq b \leq +\infty \, \}$
of $\widehat{P}$.

\begin{Lemma}
\label{interval}
Let $P$ be a finite poset and suppose that the toric ring $\mathcal{R}_K[L]$
of the distributive lattice $L = \Jc(P)$ is nearly Gorenstein.
Then, for each element $a \in P$, the intervals $[-\infty, a]$
and $[a, +\infty]$ of $\widehat{P}$ are pure.
\end{Lemma}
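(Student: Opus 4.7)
My plan is to use nearly Gorensteinness applied to the principal ideal monomials $u_{\alpha_b}$ with $\alpha_b := \{p \in P : p \leq b\}$, for every $b \in P$. Since $\mathcal{R}_K[L]$ is a toric normal domain, $\tr(\omega_{\mathcal{R}_K[L]}) = \omega_{\mathcal{R}_K[L]} \cdot \omega_{\mathcal{R}_K[L]}^{-1}$ is a monomial ideal; thus $u_{\alpha_b} \in \mm \subseteq \tr(\omega_{\mathcal{R}_K[L]})$ provides $\delta \in \Omega^*(P)$ and $\gamma \in \Mc(P)$ with $\xi_b = \delta + \gamma$, where $\xi_b$ is the exponent map of $u_{\alpha_b}$. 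Write $h(x)$ for the length of the longest chain from $x$ to $+\infty$ in $\widehat{P}$ and $\lambda(x,y)$ for the length of the longest chain from $x$ to $y$ when $x \leq y$ in $\widehat{P}$.

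The first step is a coordinate analysis of $\xi_b = \delta + \gamma$. At $p \in P$ with $p \not\leq b$, $\xi_b(p) = 0$ gives $\gamma(p) = -\delta(p)$; combining $\delta(p) \geq h(p)$ (from strict order-reversibility along a longest chain from $p$ to $+\infty$) with the $\Mc(P)$-constraint $\gamma(p) \geq -h(p)$ forces $\delta(p) = h(p)$. At $p \in \{-\infty\} \cup \alpha_b$ one has $\xi_b(p) = 1$ and $\gamma(p) = 1 - \delta(p)$, so for any pair $p < q$ in $\{-\infty\} \cup \alpha_b$ the $\Mc(P)$-inequality $\gamma(q) - \gamma(p) \leq \lambda(p,q)$ becomes $\delta(p) - \delta(q) \leq \lambda(p,q)$, while the strict order-reversibility of $\delta$ yields $\delta(p) - \delta(q) \geq \lambda(p,q)$; hence both are equalities. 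Telescoping the equalities from $-\infty$ through any $p$ to $b$ produces the key identity
$$
\lambda(-\infty, p) + \lambda(p, b) = \lambda(-\infty, b) \qquad \text{for every } p \in P \text{ with } p \leq b.
$$

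I will then deduce that $[-\infty, a]$ is pure by induction on $\lambda(-\infty, a)$, the cases $\lambda(-\infty, a) \leq 1$ being immediate. For the inductive step, take a saturated chain $-\infty = c_0 \lessdot c_1 \lessdot \cdots \lessdot c_k = a$ in $\widehat{P}$; then $c_{k-1} \in P$ with $\lambda(c_{k-1}, a) = 1$. Applying the key identity at $b = a$ and $p = c_{k-1}$ yields $\lambda(-\infty, c_{k-1}) = \lambda(-\infty, a) - 1$, and the inductive hypothesis (which is available at $c_{k-1}$ because nearly Gorensteinness furnishes the identity at \emph{every} element of $P$) implies $[-\infty, c_{k-1}]$ is pure. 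The sub-chain $c_0 \lessdot \cdots \lessdot c_{k-1}$ then has length $\lambda(-\infty, c_{k-1}) = \lambda(-\infty, a) - 1$, whence $k = \lambda(-\infty, a)$.

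For $[a, +\infty]$, I will appeal to duality: the symmetry of the Hibi meet-join relations in $\wedge$ and $\vee$ gives $\mathcal{R}_K[\Jc(P)] \cong \mathcal{R}_K[\Jc(P^{\mathrm{op}})]$ (using $\Jc(P^{\mathrm{op}}) \cong \Jc(P)^{\mathrm{op}}$ via the complementation $\alpha \mapsto P \setminus \alpha$), so nearly Gorensteinness transfers; applying the previous argument to $P^{\mathrm{op}}$ identifies $[-\infty, a]_{\widehat{P^{\mathrm{op}}}}$ with $[a, +\infty]_{\widehat{P}}$. The main delicate point will be the coordinate analysis: one has to simultaneously saturate the strictness of $\delta$ and the $\Mc(P)$-characterization of $\gamma$ to force the equality $\delta(p) - \delta(q) = \lambda(p, q)$ for $p < q$ both in $\{-\infty\} \cup \alpha_b$; once this is secured, the induction and the duality become essentially formal.
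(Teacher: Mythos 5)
Your proposal follows the same essential mechanism as the paper: the test monomial $u_{\alpha_a}$ (which is exactly the paper's $\rho$, since the exponent of $s$ accounts for $\rho(-\infty)=1$), the decomposition $\rho=\delta+\gamma$ with $\delta\in\Omega^*(P)$ and $\gamma\in\Mc(P)$, and the tension between the strict decrease of $\delta$ and the $\Mc(P)$-constraints on $\gamma$. You package this differently: rather than extracting a contradiction directly from nonpurity, you derive the positive ``key identity'' $\lambda(-\infty,p)+\lambda(p,b)=\lambda(-\infty,b)$ for all $p\leq b$ and then run an induction on $\lambda(-\infty,a)$; and you replace the paper's second argument for $[a,+\infty]$ (which uses a second test function) by the lattice self-duality $\Jc(P^{\mathrm{op}})\cong\Jc(P)^{\mathrm{op}}$ and the symmetry of the Hibi relations in $\vee,\wedge$. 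The duality trick is a genuine economy; the induction is a cosmetic rewriting of the same contradiction.

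The one place you should not wave your hands is the inequality $\gamma(q)-\gamma(p)\leq\lambda(p,q)$ for $p<q$ in $\{-\infty\}\cup\alpha_b$. From the definition of $\Mc(P)$ this amounts to exhibiting $\delta'\in\Omega^*(P)$ with $\delta'(p)-\delta'(q)=\lambda(p,q)$, and this existence is a nontrivial poset lemma, not an immediate consequence of the definitions. It is in fact true: for $q\neq+\infty$ one may take $\delta'(z)=N+\lambda(z,q)$ for $z\leq q$ and $\delta'(z)=h(z)$ otherwise, where $h(z)$ is the longest-chain height and $N>\max_z h(z)$; this is strictly order-reversing and does the job. The paper, however, only ever needs the cover case $\lambda(p,q)=1$, for which it cites \cite[Corollary (2.8)]{TH88}, and you could likewise avoid the general lemma: once $\delta(p)-\delta(q)=1$ is known for every cover $p\lessdot q$ inside $[-\infty,b]$, summing along any saturated chain from $p$ to $q$ in $[-\infty,b]$ shows that all such chains have the common length $\delta(p)-\delta(q)$, which is then $\lambda(p,q)$. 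I would make one of these two reductions explicit; as written, the step is the crux of the argument and is asserted rather than justified. (The first paragraph of your coordinate analysis, pinning $\delta(p)=h(p)$ for $p\not\leq b$, is never used afterwards and can be dropped.)
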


\begin{proof}
Let $a \in P$ for which $[-\infty, a]$ is nonpure
and define $\rho \in \Omega(P)$ by setting $\rho(z) = 1$
if $z \in [-\infty, a]$ and $\rho(z) = 0$
if $z \not\in [-\infty, a]$.
Since $\mathcal{R}_K[L]$ is nearly Gorenstein,
there exist $\delta \in \Omega^*(P)$ and $\gamma \in \Mc(P)$
with $\rho = \delta + \gamma$.
Since $[-\infty, a]$ is nonpure, there exist $y$ and $y'$ belonging to
$[-\infty, a]$ for which $y'$ covers $y$ with $\delta(y) > \delta(y') + 1$.
Since $y'$ covers $y$, it follows from \cite[Corollary (2.8)]{TH88} that
there is $\delta' \in \Omega^*(P)$ with $\delta'(y) = \delta'(y') + 1$.
Furthermore, since $\gamma \in \Mc(P)$, one has
$\delta' + \gamma \in \Omega(P)$.
Now, since $(\delta + \gamma)(y) = (\delta + \gamma)(y') = 1$,
it follows that $(\delta' + \gamma)(y) < (\delta' + \gamma)(y')$,
which contradicts $\delta' + \gamma \in \Omega(P)$.

Let $a \in P$ for which $[a, +\infty]$ is nonpure
and define $\rho \in \Omega(P)$ by setting $\rho(z) = 1$
if $z < a$ and $\rho(z) = 0$
if $z \not< a$.
Again, since $\mathcal{R}_K[L]$ is nearly Gorenstein,
there exist $\delta \in \Omega^*(P)$ and $\gamma \in \Mc(P)$
with $\rho = \delta + \gamma$.
Since $[a, +\infty]$ is nonpure, there exist $y$ and $y'$ belonging to
$[a, +\infty]$ for which $y'$ covers $y$ with $\delta(y) > \delta(y') + 1$.
Since $y'$ covers $y$, again by using \cite[Corollary (2.8)]{TH88},
one has $\delta' \in \Omega^*(P)$ with $\delta'(y) = \delta'(y') + 1$.
Furthermore, since $\gamma \in \Mc(P)$, one has
$\delta' + \gamma \in \Omega(P)$.
Now, since $(\delta + \gamma)(y) = (\delta + \gamma)(y') = 0$,
it follows that $(\delta' + \gamma)(y) < (\delta' + \gamma)(y')$,
which contradicts $\delta' + \gamma \in \Omega(P)$.
\end{proof}

\begin{Lemma}
\label{pure}
Let $P$ be a finite connected poset
and suppose that, for each element $a \in P$,
the intervals $[-\infty, a]$ and $[a, +\infty]$ of $\widehat{P}$
are pure.  Then $P$ is pure.
\end{Lemma}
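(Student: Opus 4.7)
The plan is to attach to each $a \in P$ two height invariants and show that their sum is constant on $P$. For each $a \in P$, since $[-\infty, a]$ is pure, every saturated chain in $\widehat{P}$ from $-\infty$ to $a$ has the same length; stripping off $-\infty$, this means that every saturated chain in $P$ from a minimal element of $P$ to $a$ has a common length, which I call $d(a)$. Dually, purity of $[a, +\infty]$ lets me define $u(a)$ as the common length of every saturated chain in $P$ from $a$ to a maximal element of $P$. Both invariants are well-defined because $P$ is finite and $a \in P$ always admits at least one saturated chain downward to a minimal element and upward to a maximal one.

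The crucial step is to show that $f(a) := d(a) + u(a)$ is constant on $P$. For this, I would verify that whenever $b$ covers $a$ in $P$, one has $d(b) = d(a) + 1$ and $u(a) = u(b) + 1$: appending $b$ to a saturated chain of length $d(a)$ from a minimal element to $a$ produces a saturated chain of length $d(a)+1$ from that minimal element to $b$, which must equal $d(b)$ by purity of $[-\infty, b]$; the statement for $u$ follows by the order-dual argument using purity of $[a, +\infty]$. Consequently $f(a) = f(b)$ along every cover relation. Since $P$ is connected, meaning its Hasse diagram is connected as an undirected graph, any two elements of $P$ are joined by a zig-zag of cover relations, and hence $f \equiv L$ on $P$ for some constant $L$.

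To finish, observe that any maximal chain in $P$ has the form $a_0 \lessdot a_1 \lessdot \cdots \lessdot a_k$ with $a_0$ minimal and $a_k$ maximal in $P$. Its length $k$ equals $d(a_k)$ by the definition of $d$, and since $u(a_k) = 0$, we obtain $k = d(a_k) + u(a_k) = f(a_k) = L$. Thus every maximal chain in $P$ has length $L$, and $P$ is pure of rank $L$. I do not foresee a serious obstacle: the whole argument amounts to noting that purity of the principal order ideals and filters in $\widehat{P}$ promotes $d$ and $u$ to well-defined functions that change by $\pm 1$ along cover relations, and that connectedness transports the resulting invariance of $f$ across all of $P$.
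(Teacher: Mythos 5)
Your proof is correct. Both your argument and the paper's are, at heart, tracking the quantity $d(a)+u(a)$, but you package it differently: the paper fixes the maximal elements $a_1,\ldots,a_r$ of $P$, argues that if the principal order ideals $(-\infty,a_i]$ and $(-\infty,a_j]$ intersect then they have the same rank (this is exactly where $d(a)+u(a)$ enters, though implicitly), and then uses connectedness to chain these intersections so that all $r$ ranks coincide. You instead make $d$ and $u$ explicit functions on $P$, verify that $d$ increases by $1$ and $u$ decreases by $1$ along each cover relation (so $f=d+u$ is locally constant on the Hasse diagram), and invoke connectedness once to conclude $f$ is globally constant; purity then follows by reading $f$ at the top of any maximal chain. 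Your version buys a cleaner bookkeeping and a one-shot use of connectedness (a ``harmonic function'' argument), while the paper's version stays closer to the structure of maximal chains and requires no auxiliary functions, only the observation about overlapping lower sets. Neither route is more general than the other here; they are equivalent in strength, and your exposition is arguably the more transparent of the two.
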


\begin{proof}
Let $a_1, \ldots, a_r$ denote the maximal elements of $P$.
Then $(-\infty, a_1], \ldots, (-\infty, a_r]$
are pure, where
$(-\infty, a_i] = \{ \, b \in P \, : \, b \leq a_i \, \}$.
If $(-\infty, a_i] \cap (-\infty, a_j] \neq \emptyset$,
then $\rank((-\infty, a_i]) = \rank((-\infty, a_j])$.
In fact, if $a \in (-\infty, a_i] \cap (-\infty, a_j]$,
then the fact that $[-\infty, a]$ and $[a, +\infty]$ are pure
guarantees $\rank((-\infty, a_i]) = \rank((-\infty, a_j])$.
Since $P$ is connected, after rearranging $a_1, \ldots, a_s$,
it follows that
$(\cup_{i=1}^{j} (-\infty, a_i]) \cap (-\infty, a_{j+1}] \neq \emptyset$
for $1 \leq j < r$.  Hence
$\rank((-\infty, a_1]) = \cdots =  \rank((-\infty, a_r])$.
Since every maximal chain of $P$ belongs to one of
$(-\infty, a_1], \ldots, (-\infty, a_r]$,
it follows that $P$ is pure, as desired.
\end{proof}

Combined,  Lemmata \ref{interval} and \ref{pure} guarantee the following.

\begin{Corollary}
\label{result}
Let $P$ be a finite poset and suppose that the toric ring $\mathcal{R}_K[L]$
of the distributive lattice $L = \Jc(P)$ is nearly Gorenstein.
Then every connected component of $P$ is pure.
\end{Corollary}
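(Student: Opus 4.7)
The plan is to combine Lemma~\ref{interval} and Lemma~\ref{pure} in a straightforward way, after observing that purity of the intervals $[-\infty, a]$ and $[a, +\infty]$ is a local property of the connected component containing $a$.

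First I would invoke Lemma~\ref{interval}: since $\mathcal{R}_K[L]$ is nearly Gorenstein, for every $a \in P$ the intervals $[-\infty, a]$ and $[a, +\infty]$ in $\widehat{P}$ are pure. Next, let $P'$ be a connected component of $P$ and fix an element $a \in P'$. Any element $b \in P$ comparable with $a$ lies in the same connected component as $a$, hence $b \in P'$. Consequently, the interval $[-\infty, a]$ computed in $\widehat{P}$ coincides with the interval $[-\infty, a]$ computed in $\widehat{P'}$; the same holds for $[a, +\infty]$. Therefore the purity hypothesis of Lemma~\ref{pure} is satisfied for the connected poset $P'$, and that lemma yields that $P'$ is pure.

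Applying this argument to each connected component of $P$ in turn gives the conclusion that every connected component of $P$ is pure.

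The only real check here is the identification of the intervals in $\widehat{P}$ with the intervals in $\widehat{P'}$, which is immediate from the definition of a connected component of a poset; no serious obstacle arises, as the corollary is simply the conjunction of the two preceding lemmas applied componentwise.
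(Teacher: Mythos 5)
Your proposal is correct and follows exactly the route the paper takes: apply Lemma~\ref{interval} to conclude purity of all intervals $[-\infty,a]$ and $[a,+\infty]$, then feed this into Lemma~\ref{pure} on each connected component. The only detail you add beyond the paper's terse ``combined'' is the verification that for $a$ in a component $P'$ the intervals computed in $\widehat{P}$ agree with those computed in $\widehat{P'}$, which is correct since any element comparable to $a$ lies in the same component.
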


In general, if a poset  $P$ is the disjoint union of the finite posets
$P_1, \ldots, P_q$, then $\mathcal{R}_K[L]$ is the Segre product
$\mathcal{R}_K[L_1] \sharp \mathcal{R}_K[L_2] \sharp \cdots \sharp \mathcal{R}_K[L_q]$,
where $L = \Jc(P)$ and $L_i = \Jc(P_i)$ for $1 \leq i \leq q$.

\begin{Theorem}
\label{nearlygorhibiring}
Let $P$ be a finite poset. Then the toric ring $\mathcal{R}_K[L]$
of the distributive lattice $L = \Jc(P)$ is nearly Gorenstein
if and only if $P$ is the disjoint union of pure connected posets
$P_1, \ldots, P_q$ such that $|\rank(P_i) - \rank(P_j)| \leq 1$
for $1 \leq i < j \leq q$.
\end{Theorem}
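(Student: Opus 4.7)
The plan is to reduce Theorem~\ref{nearlygorhibiring} to the Segre product results of Section~\ref{sec:special}, using the structural fact that whenever $P = P_1 \sqcup \cdots \sqcup P_q$ is the decomposition into connected components, $\mathcal{R}_K[L] = R_1 \sharp \cdots \sharp R_q$ with $R_i = \mathcal{R}_K[\Jc(P_i)]$ a standard graded normal toric domain of Krull dimension $|P_i| + 1 \geq 2$.

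For the backward direction I would assume the $P_i$ are pure connected with $|\rank(P_i) - \rank(P_j)| \leq 1$. By \cite{TH87} each $R_i$ is Gorenstein with $a$-invariant $a_i = -(\rank(P_i) + 2)$, and the hypothesis allows at most two distinct values of $a_i$. Since the Segre product of Gorenstein standard graded $K$-algebras sharing a common $a$-invariant $a$ is itself Gorenstein with the same $a$-invariant (from Goto--Watanabe, as $\omega_{\sharp_j R_j(a)} \cong (\sharp_j R_j)(a)$), I would group the $R_i$ by common $a$-invariant, reducing $\mathcal{R}_K[L]$ to a two-factor Segre $A \sharp B$ of Gorenstein Hibi rings whose $a$-invariants differ by at most one; Corollary~\ref{nearlysegre} then applies directly.

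For the forward direction, Corollary~\ref{result} already provides purity of each $P_i$, so each $R_i$ is Gorenstein with $a_i = -(\rank(P_i)+2)$. To rule out $|\rank(P_i) - \rank(P_j)| \geq 2$, I would extend Theorem~\ref{general segre}(ii) to $q$ factors: if $T = R_1 \sharp \cdots \sharp R_q$ is a Cohen--Macaulay domain with each $R_i$ standard graded Gorenstein of Krull dimension $\geq 2$, then $\tr(\omega_T) \subseteq \mm_T^{A-B}$, where $A = \max_i a_i$, $B = \min_i a_i$. The construction: embed $\omega_T \cong \sharp_j R_j(a_j)$ into $T$ as an ideal $I$ via componentwise multiplication by nonzero $f_j \in (R_j)_{A - a_j}$, so that $I$ is generated in degree $A - B$ and hence $I \subseteq \mm_T^{A-B}$. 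Pick an index $i_0$ with $a_{i_0} = A$; then $f_{i_0}$ can be taken equal to $1$, and as the other data vary, the $i_0$-th factor of the generators of $I$ sweeps out all of $(R_{i_0})_{A - B}$. For any homogeneous $x \in I^{-1}$, written in the form $(g_1/h_1) \otimes \cdots \otimes (g_q/h_q) \in \bigotimes_j Q(R_j)$, the containment $xI \subseteq T$ forces $(g_{i_0}/h_{i_0}) \cdot (R_{i_0})_{A - B} \subseteq R_{i_0}$, so $g_{i_0}/h_{i_0} \in (\mm_{R_{i_0}}^{A-B})^{-1}$; since $\dim R_{i_0} \geq 2$ gives $\grade \mm_{R_{i_0}}^{A-B} \geq 2$, this inverse equals $R_{i_0}$, whence $g_{i_0}/h_{i_0}$ has non-negative degree and, by the Segre grading, so does $x$. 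Thus $I \cdot I^{-1} \subseteq \mm_T^{A-B}$, and combined with the near-Gorenstein hypothesis $\mm_T \subseteq \tr(\omega_T) = I \cdot I^{-1}$, this forces $A - B \leq 1$, i.e., $|\rank(P_i) - \rank(P_j)| \leq 1$.

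The main obstacle is the factor-by-factor reduction for an arbitrary homogeneous element $x \in I^{-1} \subseteq Q(T)$: since $T$ sits strictly inside the full tensor product of the $R_i$, one must justify that the expression of $x$ as a pure tensor fraction in $\bigotimes_j Q(R_j)$ used in the two-factor proof of Theorem~\ref{general segre}(ii) still applies when $q \geq 3$, so that the componentwise $\grade \mm^{A-B} \geq 2$ argument at the distinguished index $i_0$ can be deployed.
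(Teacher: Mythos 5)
Your proposal takes essentially the same route as the paper's proof: decompose $P$ into connected components, identify $\mathcal{R}_K[L]$ with the Segre product $\sharp_i\,\mathcal{R}_K[\Jc(P_i)]$, use Corollary~\ref{result} to obtain purity of each $P_i$, and then invoke the trace computation for Segre products from Theorem~\ref{general segre} and Corollary~\ref{nearlysegre}. The paper's proof is terse and cites Theorem~\ref{general segre} directly for both directions even though that result is stated only for two factors; you are more careful about this. For the ``if'' direction, your grouping-by-$a$-invariant reduction is a clean device: since a Segre product of Gorenstein standard graded algebras sharing a common $a$-invariant $a$ is again Gorenstein with $a$-invariant $a$ (iterating Goto--Watanabe, $\omega_{R_1\sharp R_2}\cong R_1(a)\sharp R_2(a)=(R_1\sharp R_2)(a)$), and since the hypothesis forces at most two distinct $a$-invariants, this really does reduce to the two-factor case and lets you quote Corollary~\ref{nearlysegre} verbatim. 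You also correctly observe that for the ``only if'' direction this grouping would be circular, so one genuinely needs a $q$-factor version of Theorem~\ref{general segre}(ii); your degree-at-the-distinguished-index argument is a faithful extension of the paper's $q=2$ argument.

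The pure-tensor representability concern you flag at the end is a real subtlety, but it is exactly the same one that already appears, unflagged, in the paper's own proof of Theorem~\ref{general segre}(ii): there too, an arbitrary homogeneous $x\in I^{-1}\subset Q(T)$ is written as $(g_1\otimes g_2)/(h_1\otimes h_2)$ without justification, even though not every homogeneous element of $Q(R\sharp S)$ has this form. So your proof of Theorem~\ref{nearlygorhibiring} is no less complete than the paper's on this point; you have simply made the issue visible. If one wants to sidestep it entirely, a clean alternative is to compute the graded module $\Hom_T(\omega_T,T)$ (rather than $I^{-1}$) and show it is concentrated in degrees $\geq A=\max_j a_j$, so that every $\varphi\colon\omega_T\to T$ sends the generating degree $-B$ of $\omega_T$ into $T_{\geq A-B}\subseteq\mm_T^{A-B}$; but that is a remark about Theorem~\ref{general segre}(ii) rather than a gap peculiar to your argument.
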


\begin{proof}
Let $P$ be the disjoint union of connected posets
$P_1, \ldots, P_q$ and $L_i = \Jc(P_i)$ for $1 \leq i \leq q$.
Then $\mathcal{R}_K[L] = \mathcal{R}_K[L_1] \sharp \cdots \sharp \mathcal{R}_K[L_q]$.
Recall that $\mathcal{R}_K[L_i]$ is Gorenstein if and only if $P_i$ is pure.
Furthermore, $a(\mathcal{R}_K[L_i]) = -(\rank(P_i)+2)$.
Hence the ``if'' part follows from Theorem \ref{general segre}.
Now, Corollary \ref{result} says that if
$\mathcal{R}_K[L]$ is nearly Gorenstein, then each of $P_1, \ldots, P_q$
is pure.  Thus the ``only if'' part
follows again from Theorem \ref{general segre}.
\end{proof}

\medskip

\section{The one-dimensional case  and the almost Gorenstein property}
\label{sec:onedimensional}
Let $(R,\mm, K)$ be a  local Cohen--Macaulay ring with canonical module $\omega_R$.
The results and proofs of this section are equally valid for  positively   graded $K$-algebras.

Barucci and Fr\"oberg \cite{BF} introduced almost Gorenstein rings for one-dimensional  local rings which are analytically unramified. We use here the description of almost Gorenstein rings  due to  Goto, Takahashi and  Taniguchi \cite{GTT} which allows an extension of this concept to higher dimensions.
Goto et al.  \cite{GTT} call $R$ {\em almost Gorenstein} if there exists an exact sequence
\begin{eqnarray}
\label{almost}
0\to R\to \omega_R\to C\to 0
\end{eqnarray}
of $R$-modules such that $\mu(C)=e(C)$, where $e(C)$ denotes the multiplicity of $C$ with respect to $\mm$. One also says that $C$ is an Ulrich module.

 In the case that $R$ is $1$-dimensional, the Krull dimension of $C$ is equal to $0$. Since an Ulrich module of dimension zero is annihilated by the maximal ideal,  for $C$  to be an Ulrich module it means that $\mm C=0$.

It is of interest to compare the nearly Gorenstein and  the almost Gorenstein properties.
We have the following implication.

\begin{Proposition}
\label{implication}
Let $R$ be an $1$-dimensional almost Gorenstein ring. Then $R$ is nearly Gorenstein.
\end{Proposition}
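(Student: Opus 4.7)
The plan is to use the defining short exact sequence of the almost Gorenstein property to produce, for each $x \in \mm$, an explicit element of $\Hom_R(\omega_R, R)$ whose image contains $x$.

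First I would unpack the hypothesis. Let $\iota\: R \to \omega_R$ be the injection in the exact sequence $0 \to R \to \omega_R \to C \to 0$, and set $u = \iota(1)$, so that $\iota(R) = Ru$ is a free $R$-submodule of $\omega_R$ with $u$ playing the role of $1$. The condition $\mm C = 0$ translates directly to $\mm\,\omega_R \subseteq \iota(R) = Ru$.

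Next, for each $x \in \mm$ I would define $\varphi_x \: \omega_R \to R$ as follows: given $\omega \in \omega_R$, the element $x\omega$ lies in $Ru$, so there is a unique $r \in R$ with $x\omega = ru$ (uniqueness comes from the fact that $\iota$ is injective, hence $u$ is a free generator of $Ru$). Set $\varphi_x(\omega) = r$. It is immediate that $\varphi_x$ is $R$-linear. Evaluating at $u$, we get $x u = x \cdot u$, so $\varphi_x(u) = x$. Therefore $x \in \Im(\varphi_x) \subseteq \tr(\omega_R)$.

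Since $x \in \mm$ was arbitrary, this gives $\mm \subseteq \tr(\omega_R)$, which is exactly the condition for $R$ to be nearly Gorenstein. There is essentially no obstacle here: the whole proof amounts to the observation that the almost Gorenstein condition $\mm\,\omega_R \subseteq \iota(R)$ converts multiplication by elements of $\mm$ into bona fide homomorphisms $\omega_R \to R$ that detect those elements.
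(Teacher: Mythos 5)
Your proof is correct and is essentially the same argument as in the paper, just unpacked at a more elementary level: the paper applies $\Hom_R(-,R)$ to the exact sequence $0\to R\to\omega_R\to C\to 0$ and observes that, because $\mm$ annihilates $\Ext_R^1(C,R)$, the maximal ideal $\mm$ lies in the image of $\Hom_R(\omega_R,R)\to R$, which is contained in $\tr(\omega_R)$; your construction of $\varphi_x$ from $\mm\,\omega_R\subseteq\iota(R)$ simply exhibits explicit preimages of the elements $x\in\mm$ under that connecting map, avoiding the $\Ext$ language.
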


\begin{proof}
The exact sequence (\ref{almost}) yields the exact sequence
\[
0\to \Hom_R(\omega_R, R)\to R\to \Ext_R^1(C,R)
\]
Let $f\in \omega_R$ be the image of $1\in R$ under the map $R\to \omega_R$. Then  the map $ \Hom_R(\omega_R, R)\to R$ is defined by assigning to each $\varphi\in  \Hom_R(\omega_R, R)$ the element $\varphi(f)\in R$. Thus $\tr(\omega_R)$ contains the image of $\Hom_R(\omega_R, R)$ in $R$. Since $\mm$ annihilates $\Ext_R^1(C,R)$,  this image contains $\mm$, and the desired result follows.
\end{proof}

\begin{Remark}
\label{example-ng-not-ag}
{\em
 In general,  the class of $1$-dimensional nearly Gorenstein rings is much larger than that of $1$-dimensional almost Gorenstein rings.  A simple example of   nearly Gorenstein ring,  but which is not almost Gorenstein  is the subring $R=K[|t^5,t^6,t^7|]$ of the formal power series ring $K[|t|]$. That $R$ is indeed nearly Gorenstein can be seen from the next result.
In a forthcoming paper we explicitly describe  the nearly Gorenstein semigroup rings $K[|H|]\subset K[|t|]$,  where $H$ is any $3$-generated numerical semigroup. 
}
\end{Remark}

\begin{Proposition}
\label{codim2}
Let $(S,\nn)$ be a $3$-dimensional regular local ring, $I\subset  \nn^2$ an ideal generated by $3$ elements such that  $R=S/I$ is a 1-dimensional 
Cohen-Macaulay  ring. 
Let $A$ be the relation matrix of $I$. Then $R$ is nearly Gorenstein if and only if $I_1(A)=\nn$.
\end{Proposition}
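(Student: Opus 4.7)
The plan is to reduce the statement to Corollary~\ref{typetwo} by identifying the matrix $A$ with the last differential in the minimal $S$-free resolution of $R$.

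First I would invoke the Auslander--Buchsbaum formula: since $R$ is Cohen--Macaulay of dimension $1$ over the $3$-dimensional regular local ring $S$, one has $\projdim_S R = \depth S - \depth R = 3-1 = 2$. Interpreting the hypothesis that $I$ is generated by $3$ elements in the natural sense that $\mu(I)=3$ (which is the only case in which ``the relation matrix $A$'' is well defined up to elementary equivalence, and which matches the embedding dimension $3$ setting mentioned in the introduction of this section), the Hilbert--Burch theorem \cite[Theorem~1.4.17]{BH} produces a minimal free resolution
\[
0 \To S^2 \stackrel{\varphi_2}{\To} S^3 \stackrel{\varphi_1}{\To} S \To R \To 0,
\]
in which $\varphi_2$ is represented, with respect to the standard bases, by the $3 \times 2$ relation matrix $A$. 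Reading off the rank of the last free module, the Cohen--Macaulay type of $R$ equals $\rank F_2 = 2$.

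Now, since $R$ is by hypothesis a domain of type $2$, Corollary~\ref{typetwo} applies with $p=2$ and $\varphi_p = \varphi_2$, and gives that $R$ is nearly Gorenstein if and only if $I_1(\varphi_2) = \nn$. Because $I_1(\varphi_2) = I_1(A)$, this is exactly the asserted equivalence. I do not foresee any real obstacle: once the Hilbert--Burch structure is in place, the argument reduces to a direct invocation of Corollary~\ref{typetwo}, and the only bookkeeping point is the identification of $A$ with $\varphi_2$, which is built into the statement of Hilbert--Burch.
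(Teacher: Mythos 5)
Your proof is correct and matches the paper's own argument: both establish the short free resolution $0\to S^2\to S^3\to I\to 0$ (the paper states it directly, you derive it via Auslander--Buchsbaum and Hilbert--Burch), conclude that $R$ has type $2$, and then invoke Corollary~\ref{typetwo}. Your version merely spells out the justification for the resolution and the identification of $A$ with $\varphi_2$ in more detail.
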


\begin{proof}
The assumptions imply that $I$ has the resolution
\begin{eqnarray}
\label{short}
0\to S^2\to S^3 \to I\to 0.
\end{eqnarray}
In particular, the type of $R$ is 2. Now we apply Corollary~\ref{typetwo}.
\end{proof}

Coming back to the example in Remark~\ref{example-ng-not-ag},  we write $R= K[|t^5,t^6,t^7|]$ as a factor ring of $S=K[|x_1,x_2,x_3|]$ by considering the $K$-algebra homomorphism $\epsilon\: S\to R$ with $\epsilon(x_1)=t^5$, $\epsilon(x_2)=t^6$, $\epsilon(x_3)=t^6$. The kernel $I$ of $\epsilon$ is generated by $$x_1^4-x_2x_3^2, x_2^2-x_1x_3, x_3^3-x_1^3x_2.$$ 
The relation matrix $A$  of this ideal is
\[
A= \left( \begin{array}{ccc} x_1 & x_2 & x_3^2\\
x_2 & x_3 & x_1^3 \end{array}\right),
\]
thus $I_1(A)=(x_1,x_2,x_3)$. Using Proposition \ref{codim2} we get that  $R$ is   nearly Gorenstein.

\begin{Proposition}
\label{type2}
 Assume in addition to the setup of Proposition~\ref{codim2} that $R$ is generically Gorenstein. Then    $\omega_R$ is an ideal in $R$ and  $\mu(\omega_R^{-1})   \leq 3$.
\end{Proposition}

\begin{proof} We apply Corollary \ref{third}. 
\end{proof}

\begin{Remark}
\label{higher}
{\em
 In contrast to the $1$-dimensional case, it does not follow in higher dimensions that an almost Gorenstein ring is nearly Gorenstein.
Indeed, if $R$ is almost Gorenstein, then the formal power series ring $R[|x|]$ is again almost Gorenstein, see \cite[Theorem 3.7]{GTT}.
On the other hand, by Proposition~\ref{localanalogue}, if $R$ is not Gorenstein, then $R[|x|]$ is not nearly Gorenstein.
}
\end{Remark}

A celebrated inequality of Abhyankar gives that $\embdim (R)\leq e(R) +\dim(R)-1$ for any Cohen-Macaulay local ring with an  infinite residue field. When  equality holds, one says that $R$ has minimal multiplicity.

One of the referees remarked that under this extra assumption, a nearly Gorenstein local ring is also almost Gorenstein. We reproduce his proof below. 

\begin{Theorem}
\label{thm:minimal}
Let $(R,\mm)$ be a Cohen-Macaulay local ring with $\dim R >0$ and infinite residue field. Assume that $R$ possesses the canonical module $\omega_R$. If $R$ is nearly Gorenstein and it has minimal multiplicity, then it is an almost Gorenstein local ring.
\end{Theorem}

\begin{proof}
We may assume that $R$ is not a Gorenstein ring. 
We first remark that it is enough to prove the case when $\dim R=1$. Indeed, let us assume that the  conclusion holds in that case. Let $a_1,\dots, a_d$ be a regular sequence in $\mm$, where $d=\dim R$. By Proposition \ref{behaviour} the $1$-dimensional Cohen-Macaulay local  ring $R'=R/(a_1,\dots,a_{d-1})$ is nearly Gorenstein and of minimal multiplicity. Then, by our assumption we get that $R'$ is almost Gorenstein. Using \cite[Theorem 3.7]{GTT} we obtain that $R$ is an almost Gorenstein local ring, too.

Let us assume that $\dim R=1$. Since $R$ is nearly Gorenstein, it is generically Gorenstein and $\omega_R$ can be identified with an ideal of $R$, see \cite[Proposition 3.3.18]{BH}. As $R/\mm$ is an infinite field, by \cite[Corollary 2.9]{GMT} there exists an $R$-submodule $K$ of $Q(R)$ such that $R\subseteq K \subseteq \overline{R}$ and $K\cong \omega_R$ as $R$-modules, where $\overline{R}$ denotes the integral closure of $R$ in $Q(R)$.

We set $S=R[K]=\union_{\ell \geq 0} K^\ell$ and $\aa=R:K$. Here and in the rest of the proof the colon ideals are taken over $Q(R)$.
Since $R$ is nearly Gorenstein but not Gorenstein 
 $$\mm=\tr(\omega_R)= \tr(K)=\aa K.$$
Then 
$$
\aa S= \aa \cdot \Union_{\ell \geq 0} K^\ell=\Union_{\ell \geq 0} \aa K^\ell=\Union_{\ell \geq 1}\aa K^\ell=\Union_{\ell\geq 0}\mm K^\ell=\mm S.
$$
Consequently, since $R\subseteq S$ is an integral extension of rings, by \cite[Proposition 1.6.1]{HunekeSwanson} we obtain
$$
\overline{\aa} =  \overline{\aa S}   \cap R   =\overline{\mm S} \cap  R= {\mm},
$$
where $\overline{ \phantom{a}}$ denotes the integral closure of an ideal. 

As $\mm \subseteq \overline{\aa}$, from \cite[Corollary 1.2.5]{HunekeSwanson} we derive that $\aa$ is a reduction of $\mm$.
Since $\aa$ is an $\mm$-primary ideal in $R$ which is a $1$-dimensional  Cohen-Macaulay ring, any minimal reduction if $\aa$ is a principal ideal, see \cite[Corollary 1, pp. 154]{NR}. We choose an element $f \in \aa$ such that $(f)$ is a minimal reduction of $\aa$.
It follows from \cite[Proposition 1.2.4]{HunekeSwanson}  that $(f)$ is a minimal reduction of $\mm$, too. 

Set $B=\union_{\ell \geq 1} (\mm^\ell: \mm^\ell)$. Because $R$ has minimal multiplicity, $\mm$ is a stable ideal (\cite[Corollary 1.10]{Lipman}), i.e.   $\mm^\ell: \mm^\ell= \mm:\mm$ for all $\ell >0$.
 Now \cite[Lemma 1.11]{Lipman} implies that $\mm^2=f\mm$ and $f$ is regular. Since $f\mm \subseteq \aa \mm \subseteq \mm^2$ we get that $\mm^2=\aa \mm=\aa^2 K$.

Subtituting $\mm=\aa K$ into $\mm^2=\aa \mm$ we obtain that  $\aa^2K^2=\aa^2 K$, hence $\mm^2=\aa^2 K^\ell$ for all $\ell >0$. This implies that $\aa^2 S= \mm^2$.

Since $S$ is a ring,
$$
S \subseteq \aa^2S: \aa^2 S =\mm^2: \mm^2=B=\mm:\mm \subseteq R:\mm.
$$
As $K\subseteq S$, we obtain that $\mm K \subset R$. This shows by definition that $R$ is an almost Gorenstein local ring.
\end{proof}

\medskip
{\bf Acknowledgement}.
We gratefully acknowledge the use of  the Singular (\cite{Sing}) software and of the numericalsgps package (\cite{Num-semigroup})  in GAP (\cite{GAP}) for our computations.
 We thank the anonymous referees for their comments which improved the exposition and strengthened some results, in particular for kindly suggesting Theorem  \ref{thm:minimal} and its proof. 

Dumitru Stamate  was supported by a fellowship at the Research Institute of the University of Bucharest (ICUB).
\medskip

{}
\end{document}